\def\newaliasedtheorem#1[#2]#3{
  \newaliascnt{#1@alt}{#2}
  \newtheorem{#1}[#1@alt]{#3}
  \expandafter\newcommand\csname #1@altname\endcsname{#3}
}
\numberwithin{equation}{section}
\newtheoremstyle{slanted}{\topsep}{\topsep}{\slshape}{}{\bfseries}{.}{.5em}{}
\theoremstyle{plain}
\newtheorem{theorem}{Theorem}[section]
\theoremstyle{definition}
\theoremstyle{remark}
\let\altphi\phi
\let\phi\varphi
\let\varphi\altphi
\let\altphi\undefined
\newcommand{\di}{\mathop{}\!\mathrm{d}}
\DeclareMathOperator{\supp}{supp}
\newcommand{\Ch}{{\sf Ch}}
\newcommand{\haus}{\mathscr{H}}
\newcommand{\dist}{\mathsf{d}}
\newcommand{\meas}{\mathfrak{m}}
\DeclareMathOperator{\RCD}{RCD}
\DeclareMathOperator{\BE}{BE}
\newfont{\tmpf}{cmsy10 scaled 2500}
\newcommand{\intav}{{\mathop{\int\kern-10pt\rotatebox{0}{\textbf{--}}}}}
\renewcommand{\ }{\text{ }}
\def\<{\langle}
\def\>{\rangle}
\begin{document}
\title{From almost smooth spaces to RCD spaces
}
 
\author{
Shouhei Honda
\thanks{Graduate School of Mathematical Sciences, The University of Tokyo; \url{shouhei@ms.u-tokyo.ac.jp}}
\,and Song Sun
\thanks{Institute for Advanced Study in Mathematics, Zhejiang University; \url{songsun@zju.edu.cn}} }
\maketitle
\begin{abstract}
	We provide various characterizations for a given almost smooth space to be an RCD space, in terms of a local volume doubling and a local Poincar\'e inequality. Applications include a characterization of Einstein $4$-orbifolds.
	\end{abstract}
\tableofcontents
\section{Introduction}
\subsection{Motivation and background}

 An RCD \emph{condition}, or more precisely the \emph{$\RCD(K,N)$ condition} for two parameters $K$ and $N$, is a synthetic notion of lower bound on Ricci curvature and upper bound on dimension for metric measure spaces. Special examples of metric measure spaces verifying the $\RCD(K, N)$ condition, called \emph{$\RCD(K, N)$ spaces}, include \emph{Ricci limit spaces}, which are by definition pointed Gromov-Hausdorff limit spaces of complete Riemannian manifolds with Ricci curvature bounded below by $K$ and dimension bounded above by $N$. 

The structure theory of Ricci limit spaces has been extensively studied in the framework of the Cheeger-Colding theory \cite{CheegerColding, CheegerColding1, CheegerColding2, CheegerColding3} (see also, for instance, \cite{CN15, CJN} for the more recent update), which establishes a regular-singular decomposition in terms of tangent cone analysis via splitting techniques. This theory not only leads us to great resolutions in Riemannian geometry (for example to Anderson-Cheeger, Fukaya, Fukaya-Yamaguchi and Gromov's conjectures), but also particularly in the volume non-collapsing setting, has significant applications in various areas of geometry. Notably, their theory played a crucial role in the proof of the Yau-Tian-Donaldson conjecture for K\"ahler-Einstein metrics on Fano manifolds \cite{CDS15} and in the construction of the
K-moduli space of smoothable K-stable Fano varieties \cite{DS,  LWX19, Odaka15, SSY16}.  

The theory of $\RCD$ spaces, which can be regarded as the best synthetic treatment of Ricci limit spaces, has been developed, roughly speaking, in two ways. 
The first one is to use the \emph{Curvature-Dimension condition} \cite{LottVillani, Sturm06, Sturm06b} coming from the \emph{Optimal Transportation Theory}, together with the \emph{Riemannian assumption} called the \emph{infinitesimally Hilbertianity} proposed by \cite{AmbrosioGigliSavare14, Gigli1}.
The other one is to use the \emph{Bakry-\'Emery condition} coming from $\Gamma$-calculus based on the Dirichlet form theory. 
It is known from \cite{AGS, AmbrosioMondinoSavare, ErbarKuwadaSturm} that both approaches are the same, namely RCD spaces can be characterized/studied by completely different ways. It is worth mentioning that significant applications of the RCD theory to other geometry are already found, for instance, \cite{BMS} with \cite{KLP}, about an existence of infinitely many geodesics in Alexandrov geometry.

 Notice that the Cheeger-Colding theory is of a purely local feature but the RCD theory by definition requires  global information. It is thus an interesting question to give a local characterization of RCD spaces. In many cases of interest and in examples one deals with spaces that are \emph{almost smooth},  
 namely, roughly speaking, spaces given by the metric completion of a smooth Riemannian manifold with Ricci bounded below, whose singular set has a high Hausdorff codimension. The precise definition will be explained in subsection \ref{subsec:main} (and in Definition \ref{def:alsm} for more general weighted spaces).
 The problem then reduces to imposing suitable conditions on the singular set  (see also \cite{BKMR}). There are many geometric settings where almost smooth spaces play an important role, for example, the proof of the Hamilton-Tian conjecture \cite{CW, Ba16} and the study of K\"ahler-Einstein metrics on singular varieties (see for example, \cite{CCHSTT, S14, S, GuSo}).

In this paper we will give characterizations for almost smooth spaces, including weighted ones, to be $\RCD$ spaces. Our criterion will be in terms of a \emph{uniformly local} condition and the spaces are allowed to be \emph{non-compact}. 

In the next subsection let us explain what is the uniform local condition we will adopt.

\subsection{Uniformly local condition; PI}
Let $X=(X, \dist_X, \meas_X)$ be a \textit{metric measure space}, namely $(X, \dist_X)$ is a complete separable metric space and $\meas_X$ is a Borel measure on $X$ which is finite and positive on each open ball (Definition \ref{def:mm}). 
The uniformly local condition as mentioned above for $X$ is called \textit{uniformly locally PI}, where the terms of PI spaces appeared firstly in \cite{CK}. 
The definition is to verify a local volume doubling condition and a local $(1,2)$-Poincar\'e inequality. For our main purposes, since $X$ is eventually a proper geodesic space, the uniformly local PI condition can be stated simply as follows (see Proposition \ref{prop:poincaresobolev}); 
for any finite $r>0$, which plays a role of the radius of a ball, 
\begin{itemize}
    \item{(Local volume doubling)} we have $\meas_X(B_{2r}(x))\le C\meas_X(B_r(x))$;
    \item{(Local Poincar\'e inequality)} the first positive eigenvalue of the minus Laplacian associated with the Neumann boundary value condition on $B_r(x)$ is bounded below by $\frac{C^{-1}}{r^2}$,
\end{itemize}
where $C>1$ is a quantitative constant. Note that though the constant $C$ is allowed to depend on (more precisely, an upper bound of) the radius $r>0$ of the ball $B_r(x)$, we do \textit{not} assume the dependence on the center $x$. Therefore, in this sense, the above PI condition is \textit{uniform}, where we should be able to discuss even in the case when $C$ also depends on the center, see Question \ref{ques:1}.
For simplicity on our terminologies, let us say just ``PI'' instead of ``uniformly locally PI'' in the sequel. See Definition \ref{def:PI} for the precise definition.

There exist  many deep studies on PI metric measure spaces, including Sobolev embedding theorems and a Rademacher type theorem. 
We refer to \cite{BjornBjorn, HK, HKST, KLV} about this topic as nice textbooks.

Finally it should be mentioned that any $\RCD(K, N)$ space for finite $N$ is PI due to verifying the Bishop-Gromov inequality \cite{LottVillani, Sturm06b}, and a local $(1,1)$-Poincar\'e inequality \cite{Rajala}. 
\subsection{Main results}\label{subsec:main}
In order to introduce our main results, Theorems \ref{thm:maincod2} and \ref{thm:mainsmooth} below, let us prepare a few terminologies, where the precise descriptions will be explained later again.

Let $X=(X, \dist_X, \meas_X)$ be a metric measure space. 
An open subset $U$ of $X$ is said to be $n$-dimensionally \textit{smooth} if any point $x \in U$ has an open neighborhood $V$ of $x$ which is isometric, as metric measure spaces, to a smooth weighted Riemannian manifold of dimension $n$ (Definition \ref{def:smooth}). 
As already mentioned in the first subsection, our main targets are \textit{almost smooth} spaces defined as follows; $X$ is said to be $n$-dimensionally \textit{almost smooth} if $X$ is actually smooth in the sense above, except for a closed subset $\mathcal{S}$, called the \textit{singular set}, of null $2$-capacity (Definition \ref{def:alsm}). Then the complement $\mathcal{R}:=X\setminus \mathcal{S}$ is called the \textit{smooth part} (or the \textit{regular set}). 

It should be emphasized that examples of almost smooth spaces, including orbifolds as typical ones, can be found from other geometries as mentioned above, for example, subRiemannian geometry. See \cite{BMR, BT, DHPW, P, PW, RS} along this direction. 

Then the first main result of the paper is stated as follows. 
\begin{theorem}[Characterization of RCD for almost smooth space]\label{thm:maincod2}
    Let $X$ be an $n$-dimensional almost smooth metric measure space (thus recall, in particular, it is complete), whose metric structure is a length space. Then, for all $K \in \mathbb{R}$ and $N \in [n, \infty)$, the following two conditions are equivalent.
    \begin{enumerate}
        \item $X$ is an $\RCD(K, N)$ space;
        \item $X$ is PI with the Sobolev-to-Lipschitz property (SL) and a quantitative Lipschitz continuity of harmonic functions (written by QL for short), and the $N$-Bakry-\'Emery Ricci tensor is bounded below by $K$ on the smooth part.
    \end{enumerate}
\end{theorem}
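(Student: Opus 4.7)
The strategy is to invoke the standard Bakry--\'Emery characterization of $\RCD(K,N)$ among infinitesimally Hilbertian length metric measure spaces: such a space is $\RCD(K,N)$ if and only if it satisfies the weak Bochner inequality $\BE(K,N)$ together with the Sobolev-to-Lipschitz property. The direction (1)$\Rightarrow$(2) is by now essentially classical: PI follows from Bishop--Gromov plus Rajala's local Poincar\'e inequality, QL is the Jiang-type Lipschitz estimate for harmonic functions on $\RCD$ spaces, and restricting the weak Bochner inequality to an open smooth region forces the weighted Ricci tensor there to be bounded below by $K$.

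For the main converse (2)$\Rightarrow$(1) I would proceed in three steps. First, infinitesimal Hilbertianity: on any Riemannian chart inside $\mathcal{R}$ the Cheeger energy coincides with the quadratic weighted Dirichlet energy, and since $\mathcal{S}$ has null $2$-capacity, functions in $\Liploc(\mathcal{R})$ are dense in $W^{1,2}(X)$, so the parallelogram identity transfers to the whole Cheeger energy. Second, the weak Bochner inequality $\BE(K,N)$: on $\mathcal{R}$ the smooth Bochner identity combined with the hypothesis on the Bakry--\'Emery Ricci tensor yields the inequality pointwise; to globalize, test against $g\chi_k^2$ where $g$ is a nonnegative bounded test function and $\chi_k$ are cut-offs supported in $\mathcal{R}$ with $\chi_k\to 1$ and $\int|\nabla\chi_k|^2\di\meas_X\to 0$ (provided by null $2$-capacity), then integrate by parts; the PI hypothesis supplies the elliptic regularity and the density of test functions needed to pass to the limit. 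Third, Sobolev-to-Lipschitz: given $f\in W^{1,2}(X)\cap L^\infty(X)$ with $|\nabla f|\le 1$ a.e., regularize by the heat flow $P_tf$; PI yields continuity of $P_tf$, and QL, applied through a harmonic comparison of $P_tf$ on small balls, yields a local Lipschitz bound whose constant tends to $1$ as $t\downarrow 0$, producing a $1$-Lipschitz representative. Combined with the length-space assumption, these three ingredients give $\RCD(K,N)$.

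The main technical obstacle lies in the globalization step for $\BE(K,N)$: the weak Bochner inequality is second-order, so the cut-offs $\chi_k$ must be chosen so that both the first-order error terms like $\<\nabla f,\nabla\chi_k\>$ and the higher-order cross terms produced by Leibniz expansions in the $\Gamma_2$ calculus all vanish simultaneously in the limit; controlling these requires not merely null $2$-capacity but also quantitative gradient bounds on regularized objects, which is precisely where QL enters. A secondary delicate point is converting the purely local harmonic-function estimate QL into the global Sobolev-to-Lipschitz property on $X$: one must argue that typical curves realizing the metric distance stay within $\mathcal{R}$ up to a $2$-capacity-negligible set, which in turn uses the length-space hypothesis and the PI-based Lusin-type approximation of Sobolev functions by Lipschitz ones.
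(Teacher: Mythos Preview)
Your overall architecture is right, and your instinct that the globalization of Bochner is the crux is correct. But the proposal has a genuine gap precisely there, and your Step~3 is misdirected.

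In Step~2 you propose to test the pointwise Bochner inequality on $\mathcal{R}$ against $g\chi_k^2$ and pass $k\to\infty$ using only $\int|\nabla\chi_k|^2\di\meas_X\to 0$. This cannot close as stated: the $\chi_k$ supplied by null $2$-capacity are merely Lipschitz, so $\Delta(g\chi_k^2)$ is not even defined; and if you integrate by parts once to work instead with $-\tfrac12\int\langle\nabla|\nabla f|^2,\nabla(g\chi_k^2)\rangle\,\di\meas_X$, you are presupposing $|\nabla f|^2\in H^{1,2}_{\mathrm{loc}}$, i.e.\ an $L^2$ bound on $\mathrm{Hess}_f$ across $\mathcal{S}$, which for a generic $f\in D(\Delta)$ you have no reason to believe. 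The paper's mechanism is concrete and different from your vague ``QL gives gradient bounds on regularized objects'': QL plus PI yield a Gaussian bound on $|\nabla_x p(x,y,t)|$ (Theorem~\ref{thm:gradient}), hence $|\nabla h_t f|\in L^{\infty}(X)$ for every $f\in L^{\infty}\cap L^2(X)$ (Corollary~\ref{cor:lipreg}). One then applies Bochner not to an arbitrary $f$ but to $h_t f$: the $L^{\infty}$ gradient bound combined with a Lipschitz-cut-off Bochner estimate (Lemma~\ref{lem:S}) forces $|\mathrm{Hess}_{h_t f}|\in L^2_{\mathrm{loc}}$, whence $|\nabla h_t f|^2\in H^{1,2}_{\mathrm{loc}}$, and only now do the $2$-capacity cut-offs $\phi_i$ do their job. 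One finishes by letting $t\downarrow 0$. The same heat-kernel gradient estimate also drives the construction of good cut-offs with $|\nabla\rho|+|\Delta\rho|\in L^{\infty}$ (Theorem~\ref{thm:cutoff}), which in turn makes the local-to-global step (Theorem~\ref{thm:LtoG}) go through in the noncompact case.

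Your Step~3 is both unnecessary and unlikely to work as written. QL gives $|\nabla h|\le \frac{C}{r}\intav_B |h|\,\di\meas_X$ with a structural constant $C$; a harmonic-comparison argument will not produce the sharp constant $1$ required by Sobolev-to-Lipschitz, and there is no reason the constant should improve to $1$ as $t\downarrow 0$ before you already know $\BE$. In the paper this property follows from PI alone (Proposition~\ref{prop:PIStoL}): a telescopic Poincar\'e argument gives a continuous representative, and then one compares $\intav_{B_\epsilon(x)}f$ with $\intav_{B_\epsilon(y)}f$ along the $W_2$-geodesic between the normalized ball measures, as in~\cite{GV}. QL plays no role in that step, and your ``curves staying in $\mathcal{R}$'' worry is a red herring.
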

We need to clarify the meaning of the QL, which is, roughly speaking, to verify
\begin{equation}
    |\nabla h| \le \frac{C}{r}\intav_B|h|\di \meas_X, \quad \text{on $\frac{1}{2}B$}
\end{equation}
for any harmonic function $h$ on an open ball $B$ in $X$, where $\frac{1}{2}B$ stands for the ball of the half radius with the same center of $B$, $\intav_B:=\frac{1}{\meas_X(B)}\int_B$, and $C$ is a quantitative positive constant. Thanks to a work in \cite{Jiang}, we know that any $\RCD(K, N)$ space for finite $N$ satisfies a QL. Thus our main contribution in the theorem above is to prove the implication from (2) to (1).

It should be emphasized that the QL-assumption \textit{cannot} be dropped because 
the cone over a large circle is \textit{globally} PI (see Remark \ref{rem:bilipinv}) and flat outside the pole, however it is not an $\RCD$ space. See Remark \ref{rem:example} for the details. In this sense, the theorem above is sharp.

On the other hand, in the case when the singular set $\mathcal{S}$ has a higher codimension, we can remove the QL-assumption as stated below. This is the second main result.
\begin{theorem}[Characterization of RCD under codimension $4$-singularity]\label{thm:mainsmooth}
    Let $X$ be an $n$-dimensional almost smooth metric measure space, whose metric structure is a length space. Assume that the singular set $\mathcal{S}$ has at least codimension $4$ in the sense;
    \begin{equation}\label{eq:codi4ass}
        \limsup_{\epsilon \to 0}\epsilon^{-4}\meas_X\left(B_{\epsilon}(\mathcal{S}\cap B)\right)<\infty,\quad \text{for any open ball $B$,}
    \end{equation}
    where $B_{\epsilon}(A)$ denotes the $\epsilon$-open neighborhood of $A$.
    Then, for all $K \in \mathbb{R}$ and $N \in [n, \infty)$, the following two conditions are equivalent.
    \begin{enumerate}
        \item $X$ is an $\RCD(K, N)$ space;
        \item $X$ is PI with the SL and the $N$-Bakry-\'Emery Ricci curvature is bounded below by $K$ on the smooth part.
    \end{enumerate}
\end{theorem}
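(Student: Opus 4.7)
The plan is to reduce Theorem \ref{thm:mainsmooth} to Theorem \ref{thm:maincod2}, by showing that PI, the smooth-part $N$-Bakry-\'Emery bound, and the codimension-$4$ hypothesis \eqref{eq:codi4ass} together force the QL property. Once QL holds, Theorem \ref{thm:maincod2} immediately delivers $\RCD(K,N)$. The intuition is that \eqref{eq:codi4ass} is strictly stronger than null $2$-capacity of $\mathcal{S}$, making the singular set too small both in Minkowski content and in capacity to distort the harmonic function theory inherited from $\mathcal{R}$, and in particular ruling out the codimension-$2$ cone obstruction recalled in Remark \ref{rem:example}.

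Fix an open ball $B=B_r(x)\subset X$ and a Cheeger-harmonic function $h$ on $B$. First, I would identify $h|_{B\cap\mathcal{R}}$ with a smooth weighted-harmonic function on the Riemannian manifold $(\mathcal{R},\dist_X,\meas_X)$. Cutoffs $\eta_\epsilon$ vanishing near $\mathcal{S}$ with $\int|\nabla\eta_\epsilon|^2\di\meas_X\to 0$ are furnished by null $2$-capacity and can be quantified using \eqref{eq:codi4ass}; testing $\int\langle\nabla h,\nabla\phi\rangle\di\meas_X=0$ against $\eta_\epsilon\phi$ for $\phi\in C^\infty_c(B\cap\mathcal{R})$ and letting $\epsilon\to 0$ shows that $h$ is classically weighted-harmonic, and hence smooth, on $B\cap\mathcal{R}$; the minimal weak upper gradient of $h$ coincides $\meas_X$-a.e.\ with the Riemannian gradient norm. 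The Cheng-Yau gradient estimate on $(\mathcal{R},\dist_X,\meas_X)$, available thanks to the $N$-Bakry-\'Emery bound, then gives $|\nabla h|\le\frac{C(K,N,r)}{r}\sup_{B}|h|$ on $\tfrac12 B\cap\mathcal{R}$, while PI together with Moser iteration yields the mean value inequality $\sup_B|h|\le C\intav_{2B}|h|\di\meas_X$. Combining the two produces the QL estimate on $\tfrac12 B\cap\mathcal{R}$, and since $\meas_X(\mathcal{S})=0$ and $\mathcal{R}$ is dense in the length space $X$, the Lipschitz bound extends to all of $\tfrac12 B$, giving QL globally on $X$.

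The main obstacle is the identification step: guaranteeing that the Cheeger Laplacian of $h$ carries no singular part supported on $\mathcal{S}$, and that the higher-regularity quantities implicit in Theorem \ref{thm:maincod2} (Hessian-type estimates entering when QL is pushed to the full $\RCD(K,N)$ condition) behave stably under the cutoff approximation. Null $2$-capacity handles the first-order calculus, but the quantitative codimension-$4$ bound \eqref{eq:codi4ass} supplies the sharper Minkowski-content estimates needed to stabilize these cutoff arguments and to absorb error terms uniformly in $\epsilon$; the flat-cone example of Remark \ref{rem:example} indicates why merely codimension $2$, which sits at the boundary of null $2$-capacity, is insufficient.
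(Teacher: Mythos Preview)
Your overall strategy --- reduce Theorem~\ref{thm:mainsmooth} to Theorem~\ref{thm:maincod2} by establishing QL --- is exactly what the paper does. The gap is in how you claim to obtain QL.

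The step ``the Cheng--Yau gradient estimate on $(\mathcal{R},\dist_X,\meas_X)$ \ldots gives $|\nabla h|\le\frac{C(K,N,r)}{r}\sup_{B}|h|$ on $\tfrac12 B\cap\mathcal{R}$'' is not justified. Cheng--Yau is a local estimate on balls contained in the (incomplete) manifold $\mathcal{R}$: at a point $x\in\tfrac12 B\cap\mathcal{R}$ the largest ball to which it applies has radius comparable to $\dist_X(x,\mathcal{S})$, not $r$. So what you actually get is only
\[
|\nabla h|(x)\;\le\;\frac{C}{\dist_X(x,\mathcal{S})}\,\sup_{B}|h|,
\]
which blows up at $\mathcal{S}$. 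Your argument as written never uses the codimension-$4$ hypothesis in the gradient step; if it were correct it would apply verbatim to the flat cone $C(\mathbb{S}^1(1+\epsilon))$ of Remark~\ref{rem:example}, which is PI, Ricci-flat on its smooth part, yet fails QL.

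The paper closes this gap in two stages. First, PI alone gives H\"older continuity of $h$, and combining this with the local Cheng--Yau bound yields the improved blow-up $|\nabla h|(x)\le C\,\dist_X(x,\mathcal{S})^{-(1-\gamma)}$ for some $\gamma\in(0,1)$ (Corollary~\ref{cor:holder}). The codimension-$4$ bound~\eqref{eq:codi4ass} then makes $|\nabla h|\in L^4_{\mathrm{loc}}$, and is used again (via cut-offs $f_\epsilon$ with $|\nabla f_\epsilon|\in L^4$) in Lemma~\ref{lem:S} to push $|\mathrm{Hess}_h|\in L^2$ and hence $|\nabla h|^2\in H^{1,2}_{\mathrm{loc}}$ \emph{across} $\mathcal{S}$. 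Second, Bochner gives $(\Delta-2K)|\nabla h|^2\ge 0$ weakly on all of $\tfrac12 B$, and Moser iteration applied to the subsolution $|\nabla h|^2$ (not to $h$) upgrades the $L^2$ bound to an $L^\infty$ bound, which is QL. The missing idea in your proposal is precisely this bootstrap: one must run Moser on $|\nabla h|^2$ after first proving it lies in $H^{1,2}$ across the singular set, and it is in that $H^{1,2}$ step that codimension $4$ (rather than $2$) is genuinely consumed.
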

It is worth pointing out that the two results above are also justified in a more non-smooth framework (Theorems \ref{thm:fromalmostrcdtorcd} and \ref{thm:cod4almost}) and that Theorem \ref{thm:mainsmooth} applies to give a characterization of Einstein orbifolds in dimension $4$ (Theorem \ref{thm:orbifold}).

Finally we should refer related papers \cite{Sturm18, STURM25} about characterizations of BE conditions. In particular compare \cite[Theorem 5.2]{STURM25} with Proposition \ref{prop:codi4}.

In the next subsection, let us provide outlines of the proofs of the results above.
\subsection{Proof and organization}
The technical key observation to realize the main results comes from works by \cite{JKY, Jiang, Jiang15}. Namely, under PI with a QL, we knew a quantitative local Lipschitz continuity for a solution of a Poisson equation (Theorem \ref{thm:Poisson}). In particular, the heat kernel is locally Lipschitz (Theorem \ref{thm:gradient}). 

Using these estimates, we can construct \textit{good} cut-off functions, originally going back to works by \cite{CheegerColding} for manifolds (see \cite{MN} for RCD spaces, where we follow the arguments in the later one \cite{MN}). Such cut-off functions play key roles to establish a local-to-global result (Theorem \ref{thm:LtoG}).

We are now in a position to explain how to prove Theorem \ref{thm:maincod2}. Fix $X$ verifying (2). By similar ideas obtained in \cite{Honda3}, with a new observation by \cite{S}, we can prove that the Bakry-\'Emery condition is satisfied \textit{locally} on $X$. Together with the above local-to-global result, we can improve this to the \textit{global} Bakry-\'Emery condition on $X$. Since the other conditions for $X$ to be an RCD space are easily justified only by our PI assumption, we see that $X$ is actually an $\RCD(K, N)$ space, thus (1) is satisfied. Namely we have Theorem \ref{thm:maincod2}. It is worth mentioning that as a new technical ingredient, we generalize a main result in \cite{Honda3} for compact spaces to the non-compact weighted case, see Theorem \ref{thm:be}.

In order to prove Theorem \ref{thm:mainsmooth}, it is enough to check that if $X$ satisfies (2) with (\ref{eq:codi4ass}), then a QL holds because of Theorem \ref{thm:maincod2}. To get the desired QL, we borrow a similar idea by \cite{Dai}. Namely, under assuming only PI, though a local Lipschitz continuity for a harmonic function $h$ does not hold in general (see Remark \ref{rem:example}), we know a H\"older continuity of $h$. Together with a quantitative Lipschitz continuity of $h$ on the smooth part $\mathcal{R}=X \setminus \mathcal{S}$ coming from the assumption on the $N$-Bakry-\'Emery Ricci curvature, we can get
\begin{equation}\label{eq:hn}
    |\nabla h|(x)\le \frac{C}{\dist_X(x, \mathcal{S})^{1-\gamma}}, \quad \text{for any $x \in \mathcal{R}$}
\end{equation}
for some quantitative $\gamma \in (0, 1)$. Then recalling our codimension $4$ assumption (\ref{eq:codi4ass}) together with the Bochner inequality on $\mathcal{R}$, (\ref{eq:hn}) implies that $|\nabla h|^2$ is in $H^{1,2}$ locally across $\mathcal{S}$. Finally applying the Bochner inequality again, we know that $|\nabla h|^2$ is subharmonic for the operator $\Delta-2K$. Then the standard Moser iteration techniques justified by PI together with the $H^{1,2}$-regularity of $|\nabla h|^2$ allow us to improve (\ref{eq:hn}) to being $|\nabla h|^2 \in L^{\infty}$ locally. This shows the desired QL, thus we have Theorem \ref{thm:mainsmooth}.
The non-smooth generalizations are also justified along the essentially same line.

The organization of the paper is as follows.

In the next section, Section \ref{sec:2}, we recall fundamental results on metric measure spaces with possibly self-contained proofs, and then we provide some technical results, including the construction of good cut-off functions as mentioned above.

In Section \ref{sec:3}, we discuss local notions on synthetic lower bounds of Ricci curvature from the points of views of the Bakry-\'Emery theory and the RCD theory. In particular we define \textit{almost $\RCD$ spaces} which allow us to generalize our almost smooth results to this non-smooth framework. 

Section \ref{sec:as} is devoted to the introduction on our main targets, almost smooth spaces with its fundamental properties. We also discuss on the vanishing $p$-capacity (for instance Proposition \ref{prop:codi4}).

We give proofs of the main results in Section \ref{sec:5}, and then the results for almost smooth spaces can be generalized to almost $\RCD$ spaces in Section \ref{sec:almostrcd}.

Finally, in the final section, Section \ref{sec:open}, we provide a couple of open problems along this direction.

\textbf{Acknowledgement.}
The first named author acknowledges support of the Grant-in-Aid for Scientific
Research (B) of 20H01799, the Grant-in-Aid for Scientific Research (B) of 21H00977
and Grant-in-Aid for Transformative Research Areas (A) of 22H05105. He also thanks Nicola Gigli and Karl-Theodor Sturm for their useful comments. The second named author thanks Jikang Wang and Xingyu Zhu for discussions on RCD spaces. The authors wish to thank Qin Deng and Kohei Suzuki for their pointing out on errors on the first version.

\textbf{Conflict of interest}: The authors declare that they have no conflict of interest.

\textbf{Data availability}: Data sharing is not applicable to this paper as no datasets were generated or analyzed during the current study.
\section{Geometric analysis on metric measure space}\label{sec:2}
Let us provide a list for our terminologies as follows.
\begin{itemize}
    \item For an open ball $B=B_r(x)$ centered at $x \in X$ of radius $r>0$ in a metric space $(X, \dist_X)$, put
$$
RB:=B_{Rr}(x),\quad \text{for any $R>0$.}
$$
We also denote by $\bar B=\{y \in X| \dist_X(x, y) \le r\}$ the closed ball.
\item We will use the standard notations for spaces of functions, for instance, $L^p, \mathrm{Lip}$, stand for the spaces of all $L^p$-functions, of all Lipschitz functions, respectively. 
\item For any space $F$ consisting with functions on $X$, $F_c$ denotes the space of functions, belonging to $F$, with compact supports. 
\item We always identify two objects under isomorphisms.
\item Whenever we will discuss analytic and geometric inequalities below, constants $C$ may be changed from line to line, quantitatively.
\end{itemize}

\subsection{Metric measure space}
Let us start this subsection by introducing the following.
\begin{definition}[Metric measure space]\label{def:mm}
We say that a triple $(X, \dist_X, \meas_X)$, denoted by $X$ for short in the sequel, is a \textit{metric measure space} if $(X, \dist_X)$ is a complete separable metric space and $\meas_X$ is a Borel measure on $X$ which is finite and positive on any open ball.
\end{definition}
\begin{remark}
Some references do not assume the completeness for a metric measure space. In any case, a map between metric measure spaces is called an (\textit{local}, respectively) \textit{isometry} if it is surjective, and it preserves the distance and the measures (on a neighborhood of any point, respectively).     
\end{remark}
In the sequel, let us fix a metric measure space $X$ and let us refer a nice textbook \cite{GP} (see also \cite{AmbrosioGigliSavare13}) covering the details in this subsection.

The $H^{1,2}$-Sobolev space on $X$ is defined as follows.
\begin{definition}[$H^{1,2}$-Sobolev space]
Let us define the \textit{$H^{1,2}$-Sobolev space} as follows.
\begin{enumerate}
    \item{(Cheeger energy)} The \textit{Cheeger energy} $\Ch:L^2(X) \to [0, \infty]$ is defined by
\begin{align}
    &\Ch(f) \nonumber \\
    &:=\inf_{\{f_i\}_i}\left\{ \liminf_{i \to \infty}\frac{1}{2}\int_X(\mathrm{Lip}f_i)^2\di \meas_X \Big| \|f_i \to f\|_{L^2} \to 0, f_i \in \mathrm{Lip} \cap L^{\infty}\cap L^2(X)\right\},
\end{align}
where $\mathrm{Lip}f(x)$ denotes the \textit{local slope at} $x \in X$ of a locally Lipschitz function $f$ defined by
\begin{equation}
    \mathrm{Lip}f(x):=\limsup_{y \to x}\frac{|f(x)-f(y)|}{\dist_X(x,y)}
\end{equation}
if $x$ is not isolated,  and $\mathrm{Lip}f(x):=0$ otherwise.
\item{(Sobolev space)} The \textit{Sobolev space} $H^{1,2}(X)$ is defined by:
\begin{equation}
    H^{1,2}(X):=\left\{ f \in L^2(X) \Big| \Ch(f)<\infty\right\}
\end{equation}
 which is a Banach space equipped with the norm $\|f\|_{H^{1,2}}:=\sqrt{\|f\|_{L^2}^2+2\Ch(f)}$.
\end{enumerate}
 \end{definition}
 The Cheeger energy of a Sobolev function can be written by the canonical object as follows. See also \cite[Proposition 2.2.8]{GP} for a variant and see \cite{AILP} for equivalent definitions of Sobolev spaces.
 \begin{definition}[Minimal relaxed slope]
     For any $f \in H^{1,2}(X)$, let us consider the set, denoted by $R_f$, of all $g \in L^2(X)$ (called \textit{relaxed slopes} of $f$) satisfying:
     \begin{itemize}
         \item there exist $h \in L^2(X)$ and a sequence $f_i \in \mathrm{Lip}\cap L^{\infty} \cap L^2(X)$ such that $\|f_i-f\|_{L^2}\to 0$, that $\mathrm{Lip}f_i$ $L^2$-weakly converge to $h$ and that $h \le g$ for $\meas_X$-a.e..
     \end{itemize}
     Then $R_f$ is closed and convex in $L^2(X)$. Denoting by $|\nabla f|$ the unique element of $R_f$ having the minimal norm in $L^2(X)$, we have
     \begin{equation}
         \Ch(f)=\frac{1}{2}\int_X|\nabla f|^2\di \meas_X.
     \end{equation}
     We call $|\nabla f|$ the \textit{minimal relaxed slope} of $f$.
 \end{definition}
 One of important properties on $|\nabla f|$ is the \textit{locality} in the sense (see \cite[Theorems 2.1.28 and 4.1.4]{GP}): 
 \begin{itemize}
     \item{(Locality of gradient)} for all $f, g \in H^{1,2}(X)$, we have
     \begin{equation}\label{eq:localg}
         |\nabla (f-g)|=0, \quad \text{for $\meas_X$-a.e. on $\{f=g\}$},
     \end{equation}
     in particular we know 
     \begin{equation}\label{eq:localitygrad}
         |\nabla f|=|\nabla g|, \quad \text{ for $\meas_X$-a.e. on $\{f=g\}$.}
     \end{equation}
 \end{itemize}
\begin{definition}[Inifinitesimally Hilbertian (IH)]
We say that $X$ is \textit{inifinitesimally Hilbertian}, written by \textit{IH} for short, if $H^{1,2}(X)$ is a Hilbert space.
\end{definition}
If $X$ is IH, then it is known (see \cite[Theorem 4.3.3]{GP}) that for all $f, g\in H^{1,2}(X)$,
\begin{equation}
|\nabla (f+g)|^2+|\nabla (f-g)|^2=2|\nabla f|^2+2|\nabla g|^2,\quad \text{$\meas_X$-a.e.,}
\end{equation}
namely the $L^1$-function defined by
\begin{equation}
    \langle \nabla f, \nabla g\rangle := \frac{1}{2}\left( |\nabla (f+g)|^2-|\nabla f|^2-|\nabla g|^2\right)
\end{equation}
plays a role as an inner product in $\meas_X$-a.e. sense, thus we can regard this as the \textit{canonical Riemannian metric} of $X$. 
\begin{remark}\label{rem:grad}
It is easy to see that if there exists a sequence of open subsets $U_i$ such that $\meas_X(X\setminus \bigcup_iU_i)=0$ and that each $H^{1,2}(U_i)$, defined in Definition \ref{def:localsob} later, is a Hilbert space, then, $X$ is IH.
\end{remark}
Let us define the \textit{Laplacian} in this framework (see \cite[Definition 5.2.1]{GP}).
\begin{definition}[Laplacian]\label{def:lap}
    Assume that $X$ is IH. Then define the \textit{domain of the Laplacian}, denoted by $D(\Delta)$, by the set of all function $f \in H^{1,2}(X)$ satisfying the following: 
    \begin{itemize}
        \item there exists $\phi \in L^2(X)$ such that 
    \begin{equation}
        \int_X\langle \nabla f, \nabla g\rangle \di \meas_X=-\int_X\phi g\di \meas_X,\quad \text{for any $g \in H^{1,2}(X)$.}
    \end{equation}
    \end{itemize}
    Since $\phi$ is unique, we denote it by $\Delta f$, called the \textit{Laplacian} of $f$.
\end{definition}
As a direct consequence of the locality of $|\nabla f|$, (\ref{eq:localg}), we know that the Laplacian also has the locality property in the sense:
\begin{itemize}
     \item{(Locality of Laplacian)} for all $f, g \in D(\Delta)$, if $f=g$ on an open subset $U$ of $X$, then we have 
     \begin{equation}\label{eq:localitylap}
         \Delta f=\Delta g,\quad \text{for $\meas_X$-a.e. on $U$}.
     \end{equation}
 \end{itemize}
Finally let us introduce the \textit{heat flow} in our non-smooth framework.
 \begin{definition}[Heat flow]
 Assume that $X$ is IH.
     For any $f \in L^2(X)$ the \textit{heat flow} $h_{\cdot}f:(0, \infty) \to L^2(X)$ is defined by the \textit{gradient flow} of the Cheeger energy, namely 
     \begin{enumerate}
         \item $h_{\cdot}f$ is an absolutely continuous curve;
         \item $h_tf \in D(\Delta)$ for any $t>0$;
         \item $h_tf \to f$ in $L^2(X)$ as $t \to 0$;
         \item we have
         \begin{equation}\label{eq:heateq}
             \frac{\di}{\di t}h_tf=\Delta h_tf,\quad \text{for $\mathcal{L}^1$-a.e. $t>0$.}
         \end{equation}
     \end{enumerate}
 \end{definition}
 We end this subsection by introducing fundamental facts on the heat flow.
 \begin{proposition}\label{prop:heatconv}
 We have the following.
 \begin{enumerate}
     \item The absolute continuity of $h_{\cdot}f$ can be improved to the smoothness with the validity of (\ref{eq:heateq}) for any $t>0$;
     \item If $f \in H^{1,2}(X)$, then $h_tf \to f$ in $H^{1,2}(X)$ as $t \to 0$;
     \item We have a priori estimates;
     \begin{equation}\label{eq:heatflowbound}
     \|h_tf\|_{L^2} \le \|f\|_{L^2}, \quad \Ch(h_tf) \le \frac{\|f\|_{L^2}^2}{2t}, \quad \|\Delta h_tf\|_{L^2}^2\le \frac{\|f\|_{L^2}^2}{t^2}.
 \end{equation}
 \end{enumerate}     
 \end{proposition}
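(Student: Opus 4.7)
The plan is to exploit the IH assumption, under which $\Ch$ becomes a closed, non-negative quadratic form on $L^2(X)$ with dense domain $H^{1,2}(X)$, polarized by the bilinear form $\mathcal{E}(f,g)=\int_X\langle\nabla f,\nabla g\rangle\,\di\meas_X$. The associated self-adjoint non-negative operator on $L^2(X)$ is exactly $-\Delta$ with domain $D(\Delta)$ as in Definition \ref{def:lap}, and the heat flow is the contraction semigroup $h_t=e^{t\Delta}$ generated by $\Delta$. All three claims are then standard facts about such semigroups; the task is only to render the Hilbert-space calculations in the notation of the paper.

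For (1), the spectral theorem gives $h_t(L^2(X))\subset\bigcap_{k\geq 1}D(\Delta^k)$ for each $t>0$, and $t\mapsto h_tf$ is real analytic on $(0,\infty)$ with $\tfrac{\di^k}{\di t^k}h_tf=\Delta^kh_tf$ for every $k\in\setN$. In particular (\ref{eq:heateq}) holds at every positive $t$ rather than only $\mathcal{L}^1$-a.e.

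The core of (3) comes from differentiating along the flow and invoking the defining identity of Definition \ref{def:lap}. Setting $\phi(t):=\Ch(h_tf)$ and $\psi(t):=\|\Delta h_tf\|_{L^2}^2$, a direct computation yields
\begin{equation}
\frac{\di}{\di t}\|h_tf\|_{L^2}^2=-4\phi(t),\qquad \phi'(t)=-\psi(t),\qquad \psi'(t)=-2\int_X|\nabla\Delta h_tf|^2\,\di\meas_X\leq 0.
\end{equation}
Non-negativity of $\phi$ gives the $L^2$-contraction. Monotonicity of $\phi$ combined with $\int_0^t\phi(s)\,\di s=\tfrac14\bigl(\|f\|_{L^2}^2-\|h_tf\|_{L^2}^2\bigr)\leq\tfrac14\|f\|_{L^2}^2$ forces $t\phi(t)\leq\tfrac14\|f\|_{L^2}^2$, which is the $\Ch$-estimate. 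For the last bound, I would apply this $\Ch$-estimate to the curve starting at $h_{t/2}f\in D(\Delta)\subset H^{1,2}(X)$ (legitimate by (1)) and use monotonicity of $\psi$ on $[t/2,t]$, which exactly produces $\psi(t)\leq\|f\|_{L^2}^2/t^2$.

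For (2), integrating the identity $\phi'=-\psi$ gives the energy balance $\Ch(h_tf)+\int_0^t\psi(s)\,\di s=\Ch(f)$ whenever $f\in H^{1,2}(X)$, so $\limsup_{t\to 0^+}\Ch(h_tf)\leq\Ch(f)$. Since $\Ch$ is $L^2$-lower semicontinuous and $h_tf\to f$ in $L^2(X)$, we obtain $\Ch(h_tf)\to\Ch(f)$, hence $\|h_tf\|_{H^{1,2}}\to\|f\|_{H^{1,2}}$. Combined with weak $H^{1,2}$-convergence, which follows from the uniform norm bound plus the $L^2$ limit, the Hilbert structure of $H^{1,2}(X)$ upgrades this to strong convergence. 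No step presents a real obstacle: each is a direct translation of the classical Hilbert-space gradient-flow theory, and the IH hypothesis is precisely what makes that theory applicable here.
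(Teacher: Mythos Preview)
Your argument is correct and amounts to spelling out, under the IH hypothesis, the standard Hilbert-space semigroup/gradient-flow facts that the paper simply cites from \cite{GP} (Proposition~5.2.12 for (1), Theorem~5.1.12(6a) for the inequality $\limsup_{t\to 0}\Ch(h_tf)\le\Ch(f)$ in (2), and Remark~5.2.11 for (3)). The logical structure is identical, including the final step in (2) where norm convergence plus weak $H^{1,2}$-convergence is upgraded to strong convergence via the Hilbert structure; you have merely unpacked what the paper treats as black boxes.
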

 \begin{proof}
     See \cite[Proposition 5.2.12]{GP} for the proof of (1). For (2), since \cite[(6a) of Theorem 5.1.12]{GP} implies $\limsup_i\Ch(f_i) \le \Ch(f)$, we conclude. Finally see \cite[Remark 5.2.11]{GP} for the details of (3).
 \end{proof}
 \begin{remark}\label{rem:tensor}
     If $X$ is IH, we can discuss ($L^p$-)\textit{tensor fields} on $X$. See \cite{Gigli} for the details.
 \end{remark}
 \subsection{BE and RCD spaces}
 In this subsection, let us provide two notions of synthetic lower bounds on Ricci curvature. We refer \cite{A, GIGLI, STURM} for recent nice surveys about the topic.
The first one is the following, coming from the \textit{Bakry-\'Emery theory} \cite{BE} (see also \cite{AGS}).

In the sequel, let us fix a metric measure space $X$.
 \begin{definition}[BE space]
     We say that $X$ is a \textit{$\BE(K, N)$ space} for some $K \in \mathbb{R}$ and some $N \in [1, \infty]$, or a \textit{$\BE$ space} for short, if it is IH with the following:
     \begin{itemize}
         \item{(Bochner inequality)} for any $\phi \in D(\Delta) \cap L^{\infty}(X)$ with $\phi \ge 0$ and $\Delta \phi \in L^{\infty}(X)$, and for any $f \in D(\Delta)$ with $\Delta f \in H^{1,2}(X)$, we have
         \begin{equation}
             \frac{1}{2}\int_X\Delta \phi |\nabla f|^2\di \meas_X\ge \int_X\phi \left( \frac{(\Delta f)^2}{N}+\langle \nabla \Delta f, \nabla f\rangle +K|\nabla f|^2\right)\di \meas_X.
         \end{equation}
     \end{itemize}
 \end{definition}
 Then we are now ready to define RCD spaces.
 \begin{definition}[RCD space] 
     We say that $X$ is an \textit{$\RCD(K, N)$ space} for some $K \in \mathbb{R}$ and some $N \in [1 , \infty]$, or an \textit{$\RCD$ space} for short, if the following conditions are satisfied.
     \begin{enumerate}
     \item{(BE)} $X$ is a $\BE(K, N)$ space; 
         \item{(Volume growth)} 
         For some $x \in X$ and some $C>1$, we have 
         \begin{equation}\label{eq:volumegrowth}
             \meas_X(B_r(x))\le C\exp \left(Cr^2\right),\quad \text{for any $r>0$.}
         \end{equation}
         \item{(Sobolev-to-Lipschitz property {\color{blue}(SL)})} If a Sobolev function $f \in H^{1,2}(X)$ satisfies
         \begin{equation}
             |\nabla f| \le 1,\quad \text{for $\meas_X$-a.e.,}
         \end{equation}
         then $f$ has a $1$-Lipschitz representative.
     \end{enumerate}
 \end{definition}
 Next let us introduce a special class of $\RCD(K, N)$ spaces defined in \cite[Definition 1.1]{DG}.
 \begin{definition}[Non-collapsed RCD space]
 An $\RCD(K, N)$ space $X$ for some $K \in \mathbb{R}$ and some $N \in [1, \infty)$ is said to be \textit{non-collapsed} if $\meas_X=\haus^N$ holds, where $\haus^N$ denotes the Hausdorff measure of dimension $N$.
 \end{definition}
 Non-collapsed $\RCD(K, N)$ spaces have finer geometric/analytic properties rather than that of general $\RCD(K, N)$ spaces including a fact that $N$ must be an integer and it coincides with the \textit{essential dimension}, defined in Theorem \ref{thm:fundarcd}. See \cite[Theorem 1.12]{DG} (see also \cite{KM}). These finer properties will play important roles for possible applications of the RCD theory to other subjects, see \cite{S}.

 The following is due to \cite[Proposition 3.3.18 and Theorem 3.6.7]{Gigli}.
 \begin{theorem}[Hessian and its Bochner inequality]\label{thm:hessboch}
 Let $X$ be an $\RCD(K, \infty)$ space for some $K \in \mathbb{R}$. Then
 for any $f \in D(\Delta)$, the \textit{Hessian} of $f$, denoted by $\mathrm{Hess}_f$, is well-defined as an $L^2$-tensor of type $(0,2)$ (see Remark \ref{rem:tensor}), and it can be characterized by satisfying
 \begin{align}
     \mathrm{Hess}_f(\nabla g, \nabla h)=\frac{1}{2}\left(\langle \nabla \langle \nabla f, \nabla g\rangle, \nabla h\rangle +\langle \nabla \langle \nabla f, \nabla h\rangle, \nabla g\rangle -\langle \nabla \langle \nabla g, \nabla h\rangle, \nabla f\rangle \right)
 \end{align}
 for all \textit{test functions} $g, h$ defined by belonging to the set (see also Remark \ref{rem:test}): 
\begin{equation}\label{eq:testdef}
    \mathrm{Test}(X):=\{g \in D(\Delta) \cap L^{\infty} \cap \mathrm{Lip}(X) | \Delta g \in H^{1,2}(X)\}.
\end{equation}
Moreover 
 \begin{enumerate}
     \item we have \begin{align}
     \frac{1}{2}\int_X\Delta \phi |\nabla f|^2 \di \meas_X \ge \int_X \left( \phi|\mathrm{Hess}_f|^2 -(\Delta f)^2\phi-\Delta f\langle \nabla f, \nabla \phi \rangle+ K\phi |\nabla f|^2\right)\di \meas_X
 \end{align}
 for any $\phi \in D(\Delta) \cap L^{\infty}(X)$ with $\phi \ge 0$ and $\Delta \phi \in L^{\infty}(X)$ (then $|\nabla \phi| \in L^{\infty}(X)$), and for any $f \in D(\Delta)$. 
 Thus, if furthermore $\Delta f \in H^{1,2}(X)$ holds, then 
 \begin{align}\label{eq:hessbochner}
     \frac{1}{2}\int_X\Delta \phi |\nabla f|^2 \di \meas_X \ge \int_X \phi \left( |\mathrm{Hess}_f|^2 +\langle \nabla \Delta f, \nabla f\rangle+ K |\nabla f|^2\right)\di \meas_X;
 \end{align}
     \item we have for any $f \in D(\Delta)$
     \begin{equation}\label{eq:bine}
     \int_X|\mathrm{Hess}_f|^2\di \meas_X \le \int_X\left((\Delta f)^2-K|\nabla f|^2\right) \di \meas_X.
 \end{equation}
 \end{enumerate}
 \end{theorem}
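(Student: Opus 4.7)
The plan is to follow the strategy of \cite{Gigli} and realize $\mathrm{Hess}_f$ through the polarization identity, which on a smooth Riemannian manifold reads
$$
2\,\mathrm{Hess}_f(\nabla g, \nabla h) = \langle \nabla \langle \nabla f, \nabla g\rangle, \nabla h\rangle + \langle \nabla \langle \nabla f, \nabla h\rangle, \nabla g\rangle - \langle \nabla \langle \nabla g, \nabla h\rangle, \nabla f\rangle.
$$
First I would establish the richness of $\mathrm{Test}(X)$ under $\RCD(K,\infty)$: it is dense in $H^{1,2}(X)$ (since for Lipschitz $\phi \in L^2$ the heat flow $h_t \phi$ lies in $\mathrm{Test}(X)$ for $t>0$, by Proposition \ref{prop:heatconv} and the Sobolev-to-Lipschitz property which supplies the Lipschitz bound on $h_t\phi$), stable under pointwise products and smooth compositions, and for $g, h \in \mathrm{Test}(X)$ the inner product $\langle \nabla g, \nabla h\rangle$ again belongs to $H^{1,2}(X) \cap L^{\infty}(X)$. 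These closure properties make the right-hand side of the polarization identity a well-defined $L^1$ object, which I temporarily denote $H_f[g,h]$.

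The core step is to upgrade $H_f$ into a genuine $L^2$ tensor of type $(0,2)$. Fix $f \in \mathrm{Test}(X)$ and, following Bakry's $\Gamma_2$-calculus, plug perturbations $f + \varepsilon \sum_i c_i g_i$ with $c_i, g_i \in \mathrm{Test}(X)$ into the $\BE(K,\infty)$ Bochner inequality. Matching the $\varepsilon^2$-coefficients and reorganizing via the Leibniz and chain rules available on $\mathrm{Test}(X)$ produces a pointwise inequality of the form
$$
\sum_{i,j} c_i c_j\, H_f[g_i, g_j] \le A_f \cdot \Bigl\| \sum_i c_i \nabla g_i \Bigr\|^2
$$
with an $L^1$ density $A_f$ controlled by $\tfrac12 \Delta |\nabla f|^2 - \langle \nabla \Delta f, \nabla f\rangle - K |\nabla f|^2$. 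This forces $H_f[g,h]$ to depend on $g, h$ only through the vector fields $\nabla g, \nabla h$ pointwise a.e., which in the Gigli $L^0$-module framework promotes $H_f$ to an $L^2$ tensor $\mathrm{Hess}_f$ satisfying the stated characterization; integrating the same pointwise bound against a nonnegative $\phi$ yields \eqref{eq:hessbochner}, and specializing to $\phi \equiv 1$ after a cutoff exhaustion gives \eqref{eq:bine}. Extension from $\mathrm{Test}(X)$ to a general $f \in D(\Delta)$ proceeds by approximation with $f_n := h_{1/n} f \in \mathrm{Test}(X)$: the uniform bound \eqref{eq:bine} applied to $f_n$ gives weak $L^2$-precompactness of $\mathrm{Hess}_{f_n}$ in the cotangent module, and the polarization characterization identifies the weak limit as $\mathrm{Hess}_f$.

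The principal obstacle is this extraction step. The polarization expression defining $H_f[g,h]$ a priori depends on the specific choice of $g, h$, and only through a careful quadratic expansion of the $\BE(K,\infty)$ inequality applied to sums $\sum_i c_i g_i$ can one both cancel this dependence and secure the quantitative $L^2$ control needed to realize $\mathrm{Hess}_f$ as a genuine tensor in the Gigli module. The refined pointwise inequality in (1) without the assumption $\Delta f \in H^{1,2}(X)$ follows by the same machinery once the term $\int_X \phi \langle \nabla \Delta f, \nabla f\rangle \, \di \meas_X$ is rewritten using Definition \ref{def:lap} as $-\int_X \bigl( (\Delta f)^2 \phi + \Delta f\,\langle \nabla f, \nabla \phi\rangle \bigr) \, \di \meas_X$, which only requires $\phi, |\nabla \phi| \in L^\infty$; all remaining steps are approximation and standard integration by parts.
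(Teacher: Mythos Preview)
The paper does not supply its own proof here; it simply attributes the theorem to \cite[Proposition~3.3.18 and Theorem~3.6.7]{Gigli}, and your outline reproduces the architecture of that reference (polarization formula on $\mathrm{Test}(X)$, quadratic expansion of the $\BE(K,\infty)$ inequality to extract the tensorial $L^2$ bound, then extension by approximation).  Two technical points deserve care.  First, for a general $f\in D(\Delta)$ the regularization $h_{1/n}f$ need not lie in $\mathrm{Test}(X)$: there is no a priori $L^\infty$ or Lipschitz bound on $h_tf$ without truncating $f$ first (compare Step~1 in the proof of Proposition~\ref{prop:codi4}), so the approximation sequence should be $h_{1/n}(f^L)$ followed by a diagonal argument.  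Second, deriving \eqref{eq:bine} by ``specializing to $\phi\equiv 1$ after a cutoff exhaustion'' presupposes cutoffs with bounded Laplacian, which in $\RCD(K,\infty)$ without a finite dimensional bound is not available from the tools in this paper; Gigli's route is instead to integrate the measure-valued inequality $\Gamma_2(f)\ge (|\mathrm{Hess}_f|^2+K|\nabla f|^2)\meas_X$ directly over $X$, using that the total mass of $\Gamma_2(f)$ equals $\int_X(\Delta f)^2\,\di\meas_X$.
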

 The following locality of the Hessian is a direct consequence of that of the gradient (\ref{eq:localitygrad}) (see also \cite[Proposition 3.3.24]{Gigli}).
 \begin{itemize}
     \item{(Locality of Hessian)} for all $f, g \in D(\Delta)$, we have 
     \begin{equation}\label{eq:localityhess}
         \mathrm{Hess}_f=\mathrm{Hess}_g,\quad \text{for $\meas_X$-a.e. on $\{f=g\}$}.
     \end{equation}
 \end{itemize}
 Finally let us mention the following. We refer \cite[Theorem 0.1]{BrueSemola} for (1), and \cite[Theorem 1.5]{BGHZ} (or \cite[Theorem 1.2]{H} in the compact case) together with \cite[Theorem 3.12]{BrueSemola} and \cite[Proposition 3.2]{Han2} for (2).
 \begin{theorem}\label{thm:fundarcd}
     Let us assume that $X$ is an $\RCD(K, N)$ space for some $K \in \mathbb{R}$ and some $N \in [1, \infty)$, which is not a single point. 
     \begin{enumerate}
         \item{(Essential dimension)} There exists a unique $n \in \mathbb{N}\cap [1, N]$ such that for $\meas_X$-a.e. $x \in X$, any tangent cone at $x$ is isometric to the Euclidean space of dimension $n$, namely
         \begin{equation}\label{eq:tangentcone}
             \left(X, \frac{1}{r}\dist_X, x, \frac{1}{\meas_X(B_r(x))}\meas_X\right) \stackrel{\mathrm{pmGH}}{\to} \left(\mathbb{R}^n, \dist_{\mathrm{Euc}}, 0_n, \frac{1}{\omega_n}\haus^n \right),\quad \text{as $r \to 0$},
         \end{equation}
         where $pmGH$ stands for the pointed measured Gromov-Hausdorff convergence. We say $n$ the essential dimension of $X$.
         \item{(Characterization of non-collapsed space)} Denoting by $n$ the essential dimension of $X$, $\meas_X=c\haus^n$ holds for some $c>0$ and $(X, \dist_X, \haus^n)$ is a non-collapsed $\RCD(K, n)$ space if and only if
         \begin{equation}\label{eq:hesstr}
             \mathrm{tr}(\mathrm{Hess}_f)=\Delta f
         \end{equation}
         holds for any $f \in D(\Delta)$.
     \end{enumerate}
 \end{theorem}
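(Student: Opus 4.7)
The plan is to treat the two parts by quite different methods, since (1) is a global rigidity statement about tangent cone structure while (2) is a pointwise characterization via a trace identity.

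For part (1), I would follow the strategy of Brué-Semola. One starts from the now-standard structure theory for $\RCD(K,N)$ spaces: at $\meas_X$-almost every point $x$ the tangent cones exist and are all Euclidean spaces $\mathbb{R}^{n(x)}$ with $n(x) \in \{1, \dots, \lfloor N \rfloor\}$. The task reduces to showing that the measurable function $n$ is essentially constant. The cleanest route proceeds by constructing the regular Lagrangian flow generated by the gradient of a test function (as in (\ref{eq:testdef})) on a ball, verifying via Ambrosio-Trevisan theory that this flow is $\meas_X$-measure preserving up to a bounded Jacobian, and checking that its infinitesimal action maps tangent cones to tangent cones in a dimension-preserving way (this crucially uses the Sobolev-to-Lipschitz property of $\RCD$). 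Because such flows can connect $\meas_X$-almost every pair of points inside a connected component of $\supp \meas_X$, the function $n$ is constant on each component; connectedness of $\supp \meas_X$ in the length setting yields the full claim.

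For part (2), the ``only if'' direction follows from the Hessian-Bochner inequality (\ref{eq:hessbochner}) combined with the pointwise Cauchy-Schwarz bound
\begin{equation*}
    |\Hess_f|^2 \ge \frac{(\tr \Hess_f)^2}{n} = \frac{(\Delta f)^2}{n},
\end{equation*}
valid on the $n$-dimensional regular stratum, and the equality-case analysis of Han when $\meas_X = c\haus^n$. The heart of the theorem is the converse: assuming $\tr(\Hess_f) = \Delta f$ for every $f \in D(\Delta)$, I apply the same Cauchy-Schwarz inequality to upgrade (\ref{eq:hessbochner}) from the parameter $N$ to the parameter $n$, which shows that $(X, \dist_X, \meas_X)$ is in fact a $\BE(K,n)$ space. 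Since the volume-growth condition (\ref{eq:volumegrowth}) and the Sobolev-to-Lipschitz property are inherited from the $\RCD(K,N)$ hypothesis, we conclude that $X$ is an $\RCD(K,n)$ space. A final appeal to the BGHZ/Honda characterization of non-collapsed spaces, which identifies an $\RCD(K,n)$ space whose essential dimension equals $n$ as a constant multiple of $\haus^n$, yields $\meas_X = c\haus^n$.

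The main obstacle is justifying the pointwise Cauchy-Schwarz step in part (2): one must make sense of ``dimension $n$'' inside an inequality for an $L^2$-tensor field living on a space with non-trivial singular set, and the fact that $\Hess_f$ really is a symmetric $(0,2)$-tensor whose rank is at most $n$ almost everywhere is exactly what the essential-dimension theory from part (1) provides. In this sense part (2) depends crucially on part (1), and synchronizing the two — rather than any individual calculation — is the genuinely hard point, with the subsequent BGHZ identification of the density $\meas_X/\haus^n$ as a constant requiring a separate Bishop-Gromov-type monotonicity that is natural to take as a black box.
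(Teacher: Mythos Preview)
Your proposal is correct and matches the paper's own treatment, which does not give a proof but simply cites \cite[Theorem 0.1]{BrueSemola} for (1) and \cite[Theorem 1.5]{BGHZ} (or \cite[Theorem 1.2]{H} in the compact case) together with \cite[Theorem 3.12]{BrueSemola} and \cite[Proposition 3.2]{Han2} for (2). You have correctly identified all the ingredients: the Lagrangian-flow argument of Bru\`e--Semola for constancy of dimension, Han's trace identity for the ``only if'' direction, the Cauchy--Schwarz upgrade $|\Hess_f|^2 \ge (\Delta f)^2/n$ (which requires the tangent-module dimension result, i.e.\ \cite[Theorem 3.12]{BrueSemola}) to pass from $\BE(K,N)$ to $\BE(K,n)$, and finally the BGHZ identification of weakly with strongly non-collapsed spaces.
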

 Defining the $n$-dimensional (tangential) \textit{regular set} $\mathcal{R}=\mathcal{R}^{\mathrm{tang}}$ by the set of all points $x \in X$ satisfying (\ref{eq:tangentcone}), we knew some weak convexity of $\mathcal{R}$, see \cite[Theorem 6.5]{Deng}. The complement $\mathcal{S}=\mathcal{S}^{\mathrm{tang}}:=X \setminus \mathcal{R}$ is called the (tangential) \textit{singular set} (note that strictly speaking, this is different from the standard one, \cite[Definition 0.2]{CheegerColding1}).
 Note that similar, but different, notions of singular sets will appear in Definitions \ref{def:weighted} and \ref{def:almostrcd}.
 
 \subsection{Local objects}\label{subsec:localobj}
 Let us fix a metric measure space $X$ with an open subset $U$ of $X$.
 \begin{definition}[Local Sobolev spaces $H^{1,2}(U), H^{1,2}_0(U)$ and $H^{1,2}_{\mathrm{loc}}(U)$]\label{def:localsob}
 Let us define three \textit{local} Sobolev spaces as follows;
 \begin{enumerate}
     \item{($H^{1,2}_0(U)$)} define $H^{1,2}_0(U)$ by the closure of $\mathrm{Lip}_c(U)$ in $H^{1,2}(X)$; 
     \item{($H^{1,2}_{\mathrm{loc}}(U)$)} define $H^{1,2}_{\mathrm{loc}}(U)$ by the set of all $f:U \to \mathbb{R}$ satisfying that $\rho f \in H^{1,2}(X)$ for any $\rho \in \mathrm{Lip}_c(U)$ (namely, put $\rho f=0$ outside $U$); 
     \item{($H^{1,2}(U)$)} define $H^{1,2}(U)$ by the set of all $f \in H^{1,2}_{\mathrm{loc}}(U)$ satisfying $|f|+|\nabla f| \in L^2(U)$, where $|\nabla f|$ makes sense because of the locality of gradient, (\ref{eq:localg}).
 \end{enumerate}
\end{definition}
Note that both $H^{1,2}(U)$ and $H^{1,2}_0(U)$ are Hilbert spaces if $X$ is IH (see Remark \ref{rem:grad} for the converse). 
Similarly for any $1<p<\infty$, we can define $H^{1,p}_0(U), H^{1,p}_{\mathrm{loc}}(U)$ and $H^{1,p}(U)$, let us refer \cite{BjornBjorn, HKST} for the details.
\begin{remark}\label{rem:sobolev}
For a function $f:X \to \mathbb{R}$, we see that $f \in H^{1,2}(X)$ holds if and only if $f|_B \in H^{1,2}(B)$ holds for any ball $B$ with $|f|+|\nabla f| \in L^2(X)$. One implication is trivial, the other one is justified as follows.

Fix an open ball $B$ of radius $1$, and take Lipschitz cut-off functions $\rho_i$  on $X$ with $|\nabla \rho_i| \le \frac{1}{i}$, $\rho_i=1$ on $iB$, and compact supports in $i^2B$. Consider $\rho_if \in H^{1,2}(X)$, then the locality of the gradient (\ref{eq:localitygrad}) allows us to compute
\begin{equation}
    |\nabla (\rho_i f)| \le |\nabla \rho_i| |f| + \rho_i |\nabla f| \le \frac{1}{i}|f|+|\nabla f|, \quad \text{for $\meas_X$-a.e. on $X$.}
\end{equation}
Thus we see that $\{\rho_if_i\}_i$ is a bounded sequence in $H^{1,2}(X)$.
Taking $i \to \infty$ with Mazur's lemma proves $f \in H^{1,2}(X)$.
\end{remark}
Finally let us localize the Laplacian as follows (see also \cite[Proposition 6.4 and Lemma 6.6]{AmbrosioMondinoSavare}).
\begin{definition}[$D(\Delta, U), D_{\mathrm{loc}}(\Delta, U)$]
Assume that $X$ is IH. Then let us define two domains of \textit{local} Laplacian as follows;
\begin{enumerate}
    \item{($D_{\mathrm{loc}}(\Delta, U)$)} define $D_{\mathrm{loc}}(\Delta, U)$ by the set of all $f \in H^{1,2}_{\mathrm{loc}}(U)$ satisfying the following:
    \begin{itemize}
        \item there exists $h \in L^2_{\mathrm{loc}}(U)$ such that
    \begin{equation}
        \int_U\langle \nabla f, \nabla \phi \rangle \di \meas_X=-\int_Uh\phi \di \meas,\quad \text{for any $\phi \in \mathrm{Lip}_c(U)$.}
    \end{equation}
    \end{itemize}
    Since $h$ is unique, we denote it by $\Delta_U f$. Then since the locality of the Laplacian as (\ref{eq:localitylap}) also holds in this local case, we can use a simplified notation $\Delta f=\Delta_Uf$ with no confusion;
    \item{($D(\Delta, U)$)} define $D(\Delta, U)$ by the set of all $f \in D_{\mathrm{loc}}(\Delta, U)$ with $|f|+|\nabla f| +|\Delta f| \in L^2(U).$
\end{enumerate}
\end{definition}
Note that in the case when $X$ is an $\RCD(K, N)$ space, thanks to the locality of gradient (\ref{eq:localitygrad}), by multiplying good cut-off functions, for any $f \in D(\Delta, U)$, the Hessian, $\mathrm{Hess}_f$, is well-defined as an element of $L^2_{\mathrm{loc}}$-tensor of $(0,2)$-type on $U$. This will play a role later.
\subsection{PI condition}\label{sub:pi}
Throughout this subsection, let us fix a metric measure space $X$.
\begin{definition}[PI condition]\label{def:PI}
We say that;
\begin{enumerate}
    \item{(Local volume doubling)} $X$ is \textit{local volume doubling} for some function $c_v:(0, \infty) \to [1, \infty)$ if 
\begin{equation}
    \meas_X(2B) \le c_v(R)\meas_X(B)
\end{equation}
holds for any open ball $B$ of radius at most $R$.
\item{(Local Poincar\'e inequality)} $X$ satisfies the \textit{local ($(1,2)$-)Poincar\'e inequality} for some function $c_p:(0, \infty) \to (0, \infty)$ and some $\Lambda\ge 1$ if for all $r \le R$, open ball $B$ of radius $r$ and Lipschitz function $f$ on $X$, we have
\begin{equation}
    \intav_B \left| f-\intav_Bf\di \meas_X\right|\di \meas_X \le c_p(R)r\left( \intav_{\Lambda B}(\mathrm{Lip}f)^2\di \meas_X\right)^{\frac{1}{2}}.
\end{equation}
\item{(PI space)} $X$ is \textit{PI} if $X$ is local volume doubling and it satisfies a local $(1,2)$-Poincar\'e inequality.
\end{enumerate}
\end{definition}
\begin{remark}\label{rem:bilipinv}
Of course there exist the corresponding \textit{global} notions as follows;
    we say that 
    \begin{enumerate}
        \item $X$ is \textit{global volume doubling} if it is locally volume doubling for a constant function $c_v\equiv c>1$;
        \item $X$ satisfies a \textit{global Poincar\'e inequality} if it satisfies the local $(1,2)$-Poincar\'e inequality for a constant function $c_p\equiv c>1$.
        \item $X$ is \textit{globally} PI if the above two conditions are satisfied.
    \end{enumerate}
    It is worth mentioning that
for all $K \in \mathbb{R}$ and $N<\infty$, any $\RCD(0, N)$ space and any compact $\RCD(K, N)$ space are globally PI. The global PI condition is a metric measure bi-Lipschitz invariant, see for instance \cite[Proposition 4.16]{BjornBjorn} (see also a recent survey \cite{Cap} about characterizations of PI conditions). 
\end{remark}
In order to introduce a couple of fundamental results on PI, let us recall basics in metric geometry. A metric space $Y$ is said to be a \textit{length space} if we have
\begin{equation}
    \dist_Y(x,y)=\inf_{\gamma}l(\gamma),\quad \text{for all $x, y \in Y$,}
\end{equation}
where the infimum in the right-hand-side runs over all curves $\gamma$ from $x$ to $y$, and $l(\gamma)$ stands for the length. Moreover we say that $Y$ is a \textit{geodesic space} if for all $x, y \in Y$, a curve from $x$ to $y$ attains the infimum above, in this case, after taking a re-parametrization, we can assume that $\gamma$ is an isometric embedding from the interval $[0, \dist_Y(x, y)]$ to $Y$, then it is called a \textit{(minimal) geodesic}. It is well-known that if $Y$ is \textit{proper} (namely any closed ball is compact) and it is a length space, then it is a geodesic space. See for instance \cite[Corollary 2.5.20]{BBI}. It is worth mentioning that, thanks to the next proposition, eventually we deal with only proper geodesic spaces in our main results.
\begin{proposition}\label{prop:poincaresobolev}
Let us assume that $X$ is a length space and let $B$ be an open ball of radius $1$. Then we have the following.
\begin{enumerate}
    \item{(Proper and geodesic)} If $X$ is locally volume doubling, then we know that $X$ is proper, thus $X$ is a geodesic space. 
    \item{(Exponential volume growth)} If $X$ is locally volume doubling, then
    \begin{equation}\label{eq:expgrowth}
        \meas_X(rB) \le Ce^{Cr}\meas_X(B),\quad \text{for any $r>1$,}
    \end{equation}
    where $C>1$ depends only on $c_v(2)$. 
    \item{(Weak Ahlfors $Q$-regularity)}  If $X$ is locally volume doubling, then for any $R>0$ there exist $c \in (0, 1)$ and  $Q>2$ depending only on $c_v(R)$ such that for all $s<t \le R$ we have
    \begin{equation}\label{eq:ahlfors}
    \frac{\meas_X(sB)}{\meas_X(tB)} \ge c\left(\frac{s}{t}\right)^Q.
    \end{equation}
    \item{(Self-improvement on Poincar\'e-Sobolev inequality with $\Lambda=1$)} 
If $X$ is PI, then we have a local $(p, 2)$-Poincar\'e-Sobolev inequality; for all $f \in \mathrm{Lip}(X)$ and $r\le R$,
\begin{equation}
    \left(\intav_{rB} \left| f-\intav_{rB}f\di \meas_X\right|^p\di \meas_X\right)^{\frac{1}{p}} \le Cr\left( \intav_{rB}(\mathrm{Lip}f)^2\di \meas_X\right)^{\frac{1}{2}},
\end{equation}
where $p=\frac{2Q}{Q-2}>2$, $Q$ is obtained as above and $C>1$ depends only on $c_v(R), c_p(R)$ and $\Lambda$.
    \item{(Dirichlet Poincar\'e inequality)} If $X$ is PI with $X \setminus (1+\epsilon)RB \neq \emptyset$ for some $\epsilon>0$ and some $R>0$, then for any $r \le R$, we have
    \begin{equation}\label{eq:sobolevpoincare}
    \left(\intav_{rB}|f|^p\di \meas_X\right)^{\frac{1}{p}} \le C r \left(\intav_{rB}|\nabla f|^2\di \meas_X\right)^{\frac{1}{2}} \quad \text{for any $f \in H^{1,2}_0(rB)$,} 
\end{equation}
where $p>2$ as above, and $C>1$ depends only on $c_v(R), c_p(R), \Lambda, R$ and $\epsilon$.
In particular the canonical embedding from $H^{1,2}_0(rB)$ into $L^2(rB)$ is a compact operator.
    \item{(Minimal relaxed slope $=$ local slope)} If $X$ is PI, then for any $f \in H^{1,2}(X)$ and any $g \in \mathrm{Lip}(X)$, we have 
    \begin{equation}
        |\nabla f|(x)=\mathrm{Lip} g(x),\quad \text{for $\meas_X$-a.e. $x \in \{f=g\}$.}
    \end{equation}
\end{enumerate}
\end{proposition}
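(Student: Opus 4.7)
The six assertions collect standard results in analysis on PI metric measure spaces, and my plan is to assemble them from well-known techniques available in \cite{BjornBjorn, HK, HKST} together with \cite{AGS, GP}.

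For (1), I would establish properness by a packing argument. Given $B_R(x)$ and an $\epsilon$-separated family $\{x_i\} \subset B_R(x)$, the disjoint balls $B_{\epsilon/2}(x_i)$ lie inside $B_{R+\epsilon/2}(x) \subset B_{2R+\epsilon/2}(x_i)$; iterated doubling at scales bounded by $2R+\epsilon/2$ provides a uniform lower bound on $\meas_X(B_{\epsilon/2}(x_i))$ in terms of $\meas_X(B_{R+\epsilon/2}(x))$, which bounds $\#\{x_i\}$ and gives total boundedness. Completeness then upgrades this to compactness, and a proper length space is geodesic by \cite[Corollary 2.5.20]{BBI}.

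For (2) and (3) the idea is chain arguments exploiting the length structure, now available because (1) makes $X$ geodesic. For (2), given $y$ with $\dist_X(x,y) \le r$, the length property produces a $1$-chain $x = y_0, y_1, \ldots, y_n = y$ with $n \le r+1$; the nesting $B_1(y_i) \subset B_2(y_{i+1})$ and iterated doubling at scale $2$ then yield $\meas_X(B_1(y)) \le c_v(2)^n \meas_X(B_1(x)) \le e^{Cr}\meas_X(B_1(x))$, and combining with an analogous covering estimate for the number of unit balls covering $B_r(x)$ proves (\ref{eq:expgrowth}). For (3), given $s < t \le R$, pick $k = \lfloor \log_2(t/s) \rfloor$ and iterate doubling at scales $\le 2R$ along the nested sequence $sB \subset 2sB \subset \cdots \subset 2^{k+1}sB \supset tB$ to get $\meas_X(tB) \le c_v(2R)^{k+1}\meas_X(sB)$; rearranging delivers (\ref{eq:ahlfors}) with $Q$ slightly larger than $\log_2 c_v(R)$.

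Items (4) and (5) I will deduce from classical PI-space Sobolev theory. The self-improvement of the dilation parameter $\Lambda$ to $1$ in a geodesic doubling space (Keith--Zhong) and the Sobolev--Poincar\'e upgrade of the exponent from $2$ to $p = 2Q/(Q-2)$ (Haj\l asz--Koskela) are standard and I will simply invoke their formulations in \cite{HK, BjornBjorn, HKST}; this proves (4). For (5), the exterior point hypothesis ensures, by (3), that on the slightly enlarged ball $(1+\epsilon/2)rB$ a definite fraction of the measure lies outside $rB$ and therefore carries the zero extension of $f \in H^{1,2}_0(rB)$; hence $|\intav_{(1+\epsilon/2)rB}f\di\meas_X|$ is controllable, and applying (4) on that ball gives (\ref{eq:sobolevpoincare}) after absorbing the mean. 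Compactness of $H^{1,2}_0(rB) \hookrightarrow L^2(rB)$ then follows from (\ref{eq:sobolevpoincare}) (higher integrability yielding equi-integrability in $L^2$) together with the properness of $rB$ proved in (1), via the Kolmogorov--Riesz criterion.

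Finally, (6) is the main obstacle and the deepest identity of the proposition. It is a version of Cheeger's theorem asserting that in a PI space the minimal relaxed slope of a Lipschitz function $g$ coincides $\meas_X$-almost everywhere with its local slope $\mathrm{Lip}\, g$; in the present framework this is provided by \cite{AGS} (see also the exposition in \cite{GP}). Granting $|\nabla g|(x) = \mathrm{Lip}\, g(x)$ for $\meas_X$-a.e.\ $x$, the stated identity for general $f \in H^{1,2}(X)$ on the coincidence set $\{f = g\}$ is then an immediate consequence of the locality of the gradient (\ref{eq:localitygrad}).
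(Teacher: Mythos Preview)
Your plan for items (1), (3)--(6) matches the paper's: these are standard PI-space facts, and the paper also dispatches them by citation (to \cite{BjornBjorn} for (1), (3), (4) and to \cite{Cheeger} for (5), (6)). Your sketches there are fine; the only cosmetic remark is that the compact embedding in (5) is usually obtained via a Rellich-type argument built directly from the Poincar\'e inequality rather than Kolmogorov--Riesz, which has no straightforward analogue on a general metric measure space.

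The only item the paper proves in detail is (2), and here your route differs and, as written, has a gap. The chain comparison you describe correctly yields $\meas_X(B_1(y)) \le c_v(2)^{\lceil r\rceil+1}\meas_X(B)$ for every $y\in rB$, but the ``analogous covering estimate for the number of unit balls covering $B_r(x)$'' is circular: bounding that number by a packing argument (disjoint half-balls $B_{1/2}(y_i)$) requires an upper bound on $\meas_X\bigl((r+\tfrac12)B\bigr)$, which is precisely what you are after. The paper breaks this circularity with an annulus comparison following \cite[Lemma~3.3]{CM}: one shows
\[
\meas_X\bigl(rB\setminus (r-1)B\bigr)\ \le\ C\,\meas_X\bigl((r-1)B\setminus (r-2)B\bigr)
\]
by taking a maximal $1$-separated net $\{x_i\}$ in the outer annulus, sliding each $x_i$ along a minimal geodesic toward the center to a point $y_i$ in the inner annulus, and comparing $\meas_X(B_2(x_i))$ with $\meas_X(B_{1/10}(y_i))$ via doubling at scales $\le 2$; summing over $i$ and then iterating in $r$ gives \eqref{eq:expgrowth} with $C=C(c_v(2))$. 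Your chain idea is exactly what makes the comparison $\meas_X(B_2(x_i))\le C\,\meas_X(B_{1/10}(y_i))$ work, but the geodesic transport into the \emph{previous} annulus is the missing geometric step that avoids the circularity.
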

\begin{proof}
Let us refer (see also \cite{HK, HKST, KLV}); 
\begin{itemize}
    \item \cite[Theorem 2.5.28, Lemma 3.3, Theorem 4.21 and Corollary 4.40]{BjornBjorn} for (1), (3) and (4);
    \item \cite[(4.5) and Theorem 5.1]{Cheeger} (together with (\ref{eq:localitygrad})) for (5) and (6).
\end{itemize} 
    We here give only the proof of (\ref{eq:expgrowth}) because the techniques will be useful, see Remarks \ref{rem:covering} and \ref{rem:ramified}. 
The idea comes from \cite[Lemma 3.3]{CM}.
Fix a sufficiently large $r>1$.%

Firstly let us prove 
\begin{equation}\label{eq:inductive}
\meas_X\left(rB\setminus (r-1)B\right) \le C\meas_X\left((r-1)B\setminus (r-2)B\right),
\end{equation}
where $C>1$ depends only on $c_v(2)$.

The proof is as follows. Take $\{x_i\}_{i=1}^k$ be a maximal $1$-separated subset of $rB\setminus (r-1)B$. In particular
\begin{equation}
rB\setminus (r-1)B\subset \bigcup_{i=1}^kB_2(x_i).
\end{equation}
On the other hand,  for any $x_i$, take a minimal geodesic $\gamma_i:[0, \dist_X(x, x_i)] \to X$ from the center $x$ of $B$ to $x_i$, and put 
\begin{equation}
y_i:=\gamma_i\left(r-\frac{3}{2}\right) \in (r-1)B\setminus (r-2)B.
\end{equation}
Note that $\{B_{\frac{1}{10}}(y_i)\}_i$ is pairwise disjoint, included in $(r-1)B\setminus (r-2)B$.
Thus
\begin{align}
\meas_X\left(rB\setminus (r-1)B\right) &\le \sum_{i=1}^k\meas_X(B_2(x_i)) \nonumber \\
&\le C\sum_{i=1}^k\meas_X(B_{\frac{1}{10}}(y_i)) \le C\meas_X\left( (r-1)B\setminus (r-2)B\right)
\end{align}
which completes the proof of (\ref{eq:inductive}).

Similarly, denoting by
\begin{equation}
a_l:=\meas_X\left( (r-l)B\setminus (r-l-1)B\right),
\end{equation}
we have
\begin{equation}
a_l \le Ca_{l+1}.
\end{equation}
In particular 
\begin{equation}
a_l\le C^{r}\meas_X(B).
\end{equation}
Thus taking the sum with respect to $l$ completes the proof of (\ref{eq:expgrowth}).
\end{proof}
\begin{remark}
    The  PI condition does not imply a convexity of the regular set, for instance, the glued space of three segments by a common end point is globally PI, however the regular set, defined by the set of interior points of segments, is not connected.
\end{remark}
\begin{remark}\label{rem:localPI}
    In connection with (the proof of) (\ref{eq:expgrowth}), it is worth mentioning that, in order to justify our main results, local Poincar\'e inequalities on large balls are not necessary, namely, it is enough to consider balls of radius at most $1$ (or more generally at most a fixed radius).
\end{remark}
Recalling the Bishop-Gromov inequality in \cite[Theorem 5.31]{LottVillani}, in \cite[Theorem 
 2.3]{Sturm06b} independently, and a $(1, 1)$-Poincar\'e inequality in \cite[Theorem 1]{Rajala}, we have the following.
\begin{theorem}[PI for RCD]\label{thm:lpircd}
    Any $\RCD(K, N)$ space for some $K \in \mathbb{R}$ and some $N \in [1, \infty)$ is a PI geodesic space, where $c_v(R), c_p(R)$ can be chosen by depending only on $K, N$ and $R$.
\end{theorem}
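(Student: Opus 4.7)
The plan is to derive the three pieces of the conclusion—geodesic structure, local volume doubling with constants depending only on $K,N,R$, and the local $(1,2)$-Poincar\'e inequality with analogous quantitative dependence—by invoking the two external inputs cited just above the theorem and then assembling them using material already in this section.

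First I would treat the geodesic property. By definition, an $\RCD(K,N)$ space is in particular a $\CD(K,N)$ space, hence every pair of points is joined by a $W_2$-geodesic of measures, which forces $(X,\dist_X)$ to be a length space. The Bishop-Gromov inequality (see \cite[Theorem~5.31]{LottVillani} and \cite[Theorem~2.3]{Sturm06b}) gives finite mass on bounded sets plus local compactness of every closed ball, so $X$ is proper. Then Proposition~\ref{prop:poincaresobolev}(1), or directly \cite[Corollary~2.5.20]{BBI}, upgrades length to geodesic.

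Next I would establish local volume doubling. The Bishop-Gromov inequality in the $\CD(K,N)$ form gives, for every $x\in X$ and $0<r\le R$,
\begin{equation}
\frac{\meas_X(B_{2r}(x))}{\meas_X(B_r(x))} \le \frac{\int_0^{2r} s_{K,N}(t)\di t}{\int_0^r s_{K,N}(t)\di t},
\end{equation}
where $s_{K,N}$ is the standard comparison density built from $K$ and $N$. The right-hand side is monotone in $r\in(0,R]$, so setting
\begin{equation}
c_v(R) := \sup_{0<r\le R}\frac{\int_0^{2r} s_{K,N}(t)\di t}{\int_0^r s_{K,N}(t)\di t}
\end{equation}
produces a finite constant depending only on $K$, $N$ and $R$, which is exactly the required local doubling function.

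Finally I would handle the Poincar\'e inequality. Rajala's theorem \cite[Theorem~1]{Rajala} provides, on any $\CD(K,N)$ space, a local $(1,1)$-Poincar\'e inequality of the form
\begin{equation}
\intav_B \left| f-\intav_B f\di\meas_X\right|\di\meas_X \le \tilde c_p(R)\,r\intav_{\Lambda B}(\mathrm{Lip}f)\di\meas_X,
\end{equation}
for Lipschitz $f$, with $\tilde c_p(R)$ depending only on $K$, $N$, $R$, and a fixed dilation factor $\Lambda\ge 1$. Applying H\"older's inequality to the right-hand side upgrades this to the $(1,2)$-Poincar\'e inequality required in Definition~\ref{def:PI}, with constant $c_p(R):=\tilde c_p(R)$ depending only on $K,N,R$. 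Combined with the doubling step, this proves that $X$ is PI in the sense of this paper, and the quantitative dependence is manifest. The main ``obstacle'' here is essentially bookkeeping: checking that the test-function class in Rajala's formulation matches the Lipschitz class used in Definition~\ref{def:PI} and that the $(1,1)$-to-$(1,2)$ upgrade does not inflate the dilation factor $\Lambda$; both are routine since $X$ is doubling and Lipschitz functions are dense in the relevant Sobolev space.
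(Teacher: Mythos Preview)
Your proposal is correct and follows essentially the same approach as the paper, which simply cites the Bishop--Gromov inequality \cite[Theorem~5.31]{LottVillani}, \cite[Theorem~2.3]{Sturm06b} for the doubling and Rajala's $(1,1)$-Poincar\'e inequality \cite[Theorem~1]{Rajala} without writing out a formal proof. You have merely supplied the routine details (the H\"older upgrade from $(1,1)$ to $(1,2)$ and the length-to-geodesic step via properness) that the paper leaves implicit.
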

\begin{remark}[PI implies Lipschitz representative]\label{liprepresentative}
    If $X$ is a PI space, then any $f \in H^{1,2}_{\mathrm{loc}}(X)$ has a Lipschitz representative. This is a direct consequence of a telescopic argument based on the Poincar\'e inequality, see for instance \cite[Section 4]{Cheeger}.
\end{remark}
Finally let us focus on Gaussian estimates of the heat kernel $p$ with respect to the time derivatives, as follows. 
\begin{theorem}[Gaussian bound on the heat kernel]\label{thm:gaus}
Assume that $X$ is a PI geodesic space with IH. Then the heat kernel $p$ is H\"older continuous. 
     Furthermore, for all $n \ge 0$, $t \le 1$ and $x, y \in X$, we have
     \begin{equation}\label{eqn:heat kernel bounds}
         \left|\partial_t^np(x, y, t)\right| \le \frac{C}{t^n\meas_X(B_{\sqrt{t}}(x))}\exp \left( -\frac{\dist_X(x,y)^2}{Ct}\right), 
     \end{equation}
     where $C>1$ depends only on $c_v(2), c_p(2)$ and $\Lambda$.
\end{theorem}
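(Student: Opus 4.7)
The plan is to reduce the time-derivative estimate for $n\ge 1$ to the case $n=0$ via the analytic semigroup structure, and to handle $n=0$ by the classical Nash--Moser--Davies scheme adapted to the Dirichlet form of $X$. This is legitimate under the PI assumption thanks to the Poincar\'e--Sobolev inequality supplied by Proposition~\ref{prop:poincaresobolev}. H\"older continuity of $p$ will come out as a byproduct of the parabolic Harnack inequality on PI Dirichlet spaces.

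First I would establish the on-diagonal bound $p(x,x,t)\le C/\meas_X(B_{\sqrt{t}}(x))$ by Nash's method. Testing the heat equation against $h_tf$ and combining the $(p,2)$-Poincar\'e--Sobolev inequality of Proposition~\ref{prop:poincaresobolev}(5) with the conservation of mass $\|h_tf\|_{L^1}\le\|f\|_{L^1}$ yields the Nash differential inequality
\begin{equation*}
\frac{\di}{\di t}\|h_tf\|_{L^2}^2\le -\frac{c\,\|h_tf\|_{L^2}^{2(1+2/Q)}}{\meas_X(B_{\sqrt{t}}(x))^{2/Q}\,\|f\|_{L^1}^{4/Q}},
\end{equation*}
whose integration gives the desired $L^1\to L^\infty$ bound on the semigroup, hence the claimed on-diagonal control of $p$.

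Next I would upgrade this to the off-diagonal Gaussian factor by Davies' perturbation trick. For a bounded $1$-Lipschitz function $\psi$ and $\alpha\ge 0$, consider the conjugated semigroup $P_t^{\alpha\psi}f:=e^{-\alpha\psi}h_t(e^{\alpha\psi}f)$. Differentiating $\|P_t^{\alpha\psi}f\|_{L^2}^2$ in $t$ and using the chain rule for the minimal relaxed slope, together with $|\nabla\psi|\le 1$, shows $\|P_t^{\alpha\psi}\|_{L^2\to L^2}\le e^{\alpha^2 t/2}$. Combining this with the on-diagonal bound and optimising $\alpha$ for the choice $\psi(\cdot)=\dist_X(\cdot,y)$ gives the full Gaussian bound in the case $n=0$. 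H\"older continuity then follows from the parabolic Harnack inequality of Sturm, valid on PI Dirichlet spaces.

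For the time derivatives, I would exploit the semigroup identity
\begin{equation*}
\partial_t^n p(x,y,2t)=\int_X \partial_t^{\lfloor n/2\rfloor} p(x,z,t)\,\partial_t^{\lceil n/2\rceil} p(z,y,t)\,\di\meas_X(z),
\end{equation*}
together with the spectral bound $\|\partial_t^k p(\cdot,y,t)\|_{L^2}\le Ct^{-k}\|p(\cdot,y,t/2)\|_{L^2}$, which follows from $\partial_t^k h_t=\Delta^k h_t$ and iterating Proposition~\ref{prop:heatconv}(3). The Gaussian weight is recovered by running Davies' weighted estimate on $\partial_t^k p$ rather than on $p$ itself, and using the triangle inequality $\dist_X(x,z)+\dist_X(z,y)\ge \dist_X(x,y)$ to combine the two factors. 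The main obstacle is precisely this last step: the naive spectral bound on $\partial_t^k p$ is $L^2$-global and forgets the exponential decay, so the exponential weight has to be woven into the semigroup estimate before differentiating in time, which is the technical heart of the argument.
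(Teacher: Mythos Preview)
The paper's proof is entirely by citation: H\"older continuity and the $n=0$ Gaussian upper bound are quoted from Sturm \cite{Sturm95, Sturm96} (via the parabolic Harnack inequality on local Dirichlet spaces), and the time-derivative bounds are delegated to the argument following \cite[Lemma~2.1]{Jiang15}. Your outline is in the spirit of what those references actually do, but there is a genuine gap in your on-diagonal step.

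The Nash differential inequality you write cannot be extracted from Proposition~\ref{prop:poincaresobolev}(5): that is a \emph{Dirichlet} Sobolev--Poincar\'e inequality, valid only for $f\in H^{1,2}_0(rB)$, and the global heat flow $h_tf$ has no reason to lie in $H^{1,2}_0$ of any fixed ball. There is also no distinguished point $x$ in a global $L^2$ estimate, so the factor $\meas_X(B_{\sqrt t}(x))$ on the right-hand side of your displayed inequality has no meaning at that stage---it is the conclusion you are after, not an input. On a general PI space (as opposed to an Ahlfors $Q$-regular one) there is no global Nash inequality, and the on-diagonal bound $p(x,x,t)\le C/\meas_X(B_{\sqrt t}(x))$ is instead obtained either by parabolic Moser iteration on space--time cylinders, or equivalently by passing from Proposition~\ref{prop:poincaresobolev}(5) to a relative Faber--Krahn inequality and running Grigor'yan's argument. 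Both routes are local by design and produce the correct ball-by-ball volume factor; this localisation is precisely the content of \cite{Sturm95, Sturm96} that the paper invokes.

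Two smaller points. Your semigroup identity for $\partial_t^n p(x,y,2t)$ is missing the factor $2^n$ on the left and the full Leibniz sum $\sum_{k=0}^n\binom{n}{k}\int_X\partial_t^k p(x,z,t)\,\partial_t^{n-k}p(z,y,t)\,\di\meas_X(z)$ on the right; this is cosmetic. More importantly, your final paragraph correctly locates the real difficulty for $n\ge 1$: the Gaussian weight must be built in \emph{before} differentiating in time, via analyticity of the Davies-perturbed semigroup $P_t^{\alpha\psi}$, so that the weighted $L^2$ bounds on $\partial_t^k p$ already carry the exponential factor. This is exactly the mechanism in \cite{Jiang15} that the paper cites, so once the on-diagonal step is repaired your route and the paper's coincide.
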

\begin{proof}
Firstly note that the H\"older continuity is the standard consequence of the parabolic Harnack inequality, see for instance \cite[Proposition 3.1]{Sturm96}. 
    Thanks to \cite[Theorem 2.6]{Sturm95} and \cite[Proposition 3.1]{Sturm96}, we know
    \begin{equation}
        p(x, y, t) \le \frac{C}{\sqrt{\meas_X(B_{\sqrt{t}}(x)\meas_X(B_{\sqrt{t}}(y))}} \exp \left( -\frac{\dist_X(x,y)^2}{Ct}\right). 
    \end{equation}
    Then the same arguments after \cite[Lemma 2.1]{Jiang15} allow us to conclude.
\end{proof}
\begin{remark}
    A lower bound on $p$ can be also obtained, see \cite[Theorem 4.8]{Sturm96} and discussions after \cite[Lemma 2.1]{Jiang15}, though we will not use it in the sequel. The upper bound $1$ on the time $t$ can be replaced by an arbitrary $T \ge 1$ (then $C$ also depends on $T$).
\end{remark}
\subsection{Quantitative Lipschitz continuity of harmonic functions (QL)}
Let us fix an IH metric measure space $X$. We start this subsection by introducing  the harmonicity of a function. 
\begin{definition}[Harmonic function]
We say that a function $f$ on an open subset $U$ of $X$ is said to be \textit{harmonic} if $f \in D_{\mathrm{loc}}(\Delta, U)$ with $\Delta f=0$.
\end{definition}
The following notion introduced in \cite[Definition 1.1]{JKY} plays a key role in the paper.
\begin{definition}[Quantitative Lipschitz continuity of harmonic functions (QL)]\label{def:qLCh}
We say that $X$ satisfies the \textit{local quantitative Lipschitz continuity of harmonic functions}, written by QL for short in the sequel, for some non-decreasing function $c_h:(0, \infty) \to [1, \infty)$ if $f:B \to \mathbb{R}$ is a harmonic function on an open ball $B$ of radius $r$, then
\begin{equation}
    \sup_{\frac{1}{2}B}|\nabla f| \le \frac{c_h(r)}{r} \intav_B|f|\di \meas_X.
\end{equation}
\end{definition}
\begin{remark}\label{rem:qlch}
    It is worth mentioning that for our purposes, it is enough to consider $c_h(r)$  for any $r \le 1$ because other case, say $r>1$, is justified by the case when $r=1$ because of taking a maximal $1$-separated subset on a given ball of radius $r>1$, together with the local volume doubling condition.
\end{remark}
\begin{remark}\label{rem:positive}
    Under assuming that $X$ is PI, if we get a quantitative Cheng-Yau type gradient estimate for any positive harmonic function $f$ on an open ball $B$ of radius $r>0$:
    \begin{equation}\label{eq:positiveharm}
        \frac{|\nabla f|}{f} \le \frac{C}{r},\quad \text{on $\frac{1}{2}B$,}
    \end{equation}
    then we also have a QL because of the following, due to a pointing out around  \cite[(1.6)]{JKY}.

    Take a harmonic function $f$ on an open ball $B$ of radius $r$, and consider an open ball $\tilde B$ of radius $\frac{r}{8}$ centered at a point in $\frac{1}{2}B$. Then applying (\ref{eq:positiveharm}) to $f|_{\tilde B}-\inf_{\tilde B}f$, we have on $\tilde B$
    \begin{equation}
        |\nabla f| \le \frac{C}{r}(f-\inf_{\tilde B}f) \le \frac{C}{r}\|f\|_{L^{\infty}(\tilde B)}\le \frac{C}{r}\intav_{2\tilde B}|f|\di \meas_X
    \end{equation}
    which proves the desired QL,
    where we used a quantitative $L^{\infty}$-estimate for harmonic functions in the last inequality, see \cite[Lemma 2.1]{JKY}.
\end{remark}
Let us introduce the following result about a QL for a $\RCD$ space proved in \cite[Theorems 1,1 and 1.2]{Jiang}.
\begin{theorem}[QL for RCD]\label{thm:qLCh}
    Any $\RCD(K, N)$ space for some $K \in \mathbb{R}$ and some $N \in [1, \infty)$ satisfies the QL for some $c_{K, N}:(0,\infty) \to (0, \infty)$ depending only on $K, N$.
\end{theorem}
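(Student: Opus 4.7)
The plan is to reduce, via Remark \ref{rem:positive}, to establishing a Cheng-Yau type gradient bound $|\nabla \log u| \leq C(K,N,r)/r$ on $\frac{1}{2}B$ for every positive harmonic function $u$ on an open ball $B = B_r(x_0)$. Setting $v := \log u$ and $w := |\nabla v|^{2}$, the chain rule for the Laplacian on $\RCD$ spaces gives $v \in D_{\loc}(\Delta, B)$ with $\Delta v = -w$. Substituting into the Bochner inequality of Theorem \ref{thm:hessboch} produces, in weak form on $B$,
\begin{equation*}
\tfrac{1}{2}\Delta w \;\geq\; \frac{w^{2}}{N} \;-\; \langle \nabla w, \nabla v\rangle \;+\; K w.
\end{equation*}

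Next, I would select a good cut-off $\phi$ on $B$ with $\phi \equiv 1$ on $\frac{1}{2}B$, $\supp \phi$ compactly contained in $B$, $|\nabla\phi| \leq C/r$ and $|\Delta\phi| \leq C/r^{2}$; such cut-offs are available on $\RCD(K,N)$ spaces through the Mondino-Naber construction mentioned in the introduction. Testing the above inequality against $\phi^{2s} w^{s-1}$ for increasing exponents $s \geq 1$, using Young's inequality on the cross term $\langle \nabla w, \nabla v\rangle$ (together with the identity $|\nabla v|^{2} = w$) to absorb it into the good terms $\phi^{2s} w^{s+1}/N$ and $|\nabla(\phi^{s} w^{s/2})|^{2}$, and invoking the local $(p,2)$-Sobolev-Poincar\'e inequality from Proposition \ref{prop:poincaresobolev}(4) together with volume doubling, one sets up a Moser iteration on the norms $\|\phi^{s} w^{s/2}\|_{L^{2}}$. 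The iteration closes to yield the pointwise bound
\begin{equation*}
\sup_{\frac{1}{2}B} w \;\leq\; \frac{C(K,N,r)}{r^{2}},
\end{equation*}
as required. An alternative, more classical route reproduces the Cheng-Yau maximum-principle argument directly: at the point where $\phi^{2}w$ attains its maximum, combine $\nabla(\phi^{2}w)=0$ and $\Delta(\phi^{2}w)\le 0$ with the differential inequality above and apply Young's inequality to $\phi\langle \nabla v, \nabla \phi\rangle$ to produce the same bound.

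The main obstacle is making these formal manipulations rigorous in the $\RCD$ framework. One needs $w \in H^{1,2}_{\loc}(B)$ in order for Moser iteration to even be initiated, and the chain rule identity $\Delta \log u = -|\nabla \log u|^{2}$ must be justified where $u$ may approach zero. The $H^{1,2}_{\loc}$-regularity of $w$ is bootstrapped from the refined Bochner inequality of Theorem \ref{thm:hessboch}(1): the term $\phi |\Hess_{v}|^{2}$ first forces $w \in L^{p}_{\loc}$ for some $p>1$, after which iterated testings against higher powers of $w$ progressively upgrade the integrability of $w$ and of $|\nabla w|$. A standard device to handle the logarithmic singularity near $\{u=0\}$ is to work with $u+\eps$ in place of $u$ and pass to the limit $\eps \to 0$ with estimates independent of $\eps$.
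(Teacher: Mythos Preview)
The paper does not prove this theorem; it simply records the statement and cites \cite[Theorems 1.1 and 1.2]{Jiang}. Your sketch is therefore not being compared to an argument in the paper but to the cited reference, and your outline is indeed in the spirit of the Cheng--Yau/Moser approach that underlies that reference.

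That said, two points in your sketch deserve comment. First, the ``alternative, more classical route'' via the pointwise maximum of $\phi^{2}w$ is not available in the $\RCD$ setting: $w=|\nabla v|^{2}$ is a priori only an $L^{1}_{\loc}$ function, there is no smooth structure to evaluate $\Delta(\phi^{2}w)$ at a point, and the standard Omori--Yau type maximum principles are not known in this generality. You should drop that alternative entirely. Second, and more seriously, your bootstrap for $w\in H^{1,2}_{\loc}(B)$ has a circularity. Writing Bochner for $v=\log u$ introduces $(\Delta v)^{2}=w^{2}$ and $\Delta v\,\langle\nabla v,\nabla\phi\rangle$ (of size $w^{3/2}|\nabla\phi|$) on the right-hand side; to control these you already need $w\in L^{2}_{\loc}$ (or at least $L^{3/2}_{\loc}$), which is exactly what you are trying to obtain. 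The sentence ``the term $\phi|\Hess_{v}|^{2}$ first forces $w\in L^{p}_{\loc}$ for some $p>1$'' does not explain how the iteration is initiated from $w\in L^{1}_{\loc}$ alone. In \cite{Jiang} this difficulty is handled differently: one works with $|\nabla u|^{2}$ for the harmonic $u$ itself (so $\Delta u=0$ and the bad quadratic terms are absent), uses the measure-valued form of the Bochner inequality available on $\RCD(K,\infty)$ spaces (see \cite{Savare, Gigli}) to give $\tfrac{1}{2}\Delta|\nabla u|^{2}\ge K|\nabla u|^{2}$ a meaning without first knowing $|\nabla u|^{2}\in H^{1,2}_{\loc}$, and only then runs De Giorgi--Moser iteration; the passage to the Cheng--Yau form (Remark~\ref{rem:positive}) comes afterwards. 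Your $u+\varepsilon$ device is also unnecessary: a positive harmonic function on $B$ is bounded below on compact subsets by Harnack, so $\log u$ is perfectly well defined there.
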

The following is due to \cite[Theorem 3.1 (or Proposition 3.2)]{JKY}.
\begin{theorem}[Lipschitz regularity of solution of Poisson equation]\label{thm:Poisson}
Assume that $X$ is a PI geodesic space with a QL.
Let $0<r \le R$ and let $B$ be an open ball of radius $r$ and let $f \in D(\Delta, B)$ with $\Delta f \in L^{\infty}(B)$. Then for $\meas_X$-a.e. $y \in \frac{1}{2}B$ we have for $q=\max\{\frac{3}{2}, \frac{2Q}{Q+2}\}$ (recall $Q$ as in (\ref{eq:ahlfors}))
\begin{equation}
    |\nabla f|(y) \le C \left( \frac{1}{r}\intav_B|f|\di \meas_X+\sum_{j=-\infty}^{\lfloor \log_2\frac{R}{2}\rfloor}2^j\left( \intav_{B_{2^j}(y)}|\Delta f|^q\right)^{\frac{1}{q}}\right),
\end{equation}
where $C$ depends only on $c_v(2R), c_p(2R)$ and $c_h(2R)$.

\end{theorem}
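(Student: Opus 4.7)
The plan is to set up a dyadic Calderón-type splitting centered at $y$ and telescope the harmonic remainders via the QL hypothesis. Fix $y \in \frac{1}{2}B$ which is a Lebesgue point of $|\nabla f|^2$ (this excludes at most a $\meas_X$-null subset of $\frac{1}{2}B$), set $B_j := B_{2^j}(y)$ for $j \in \mathbb{Z}$, and let $j_0 := \lfloor \log_2(R/2)\rfloor$. For each $j \le j_0$, solve the Dirichlet problem on $B_j$: let $u_j$ be the harmonic function on $B_j$ with $u_j - f \in H^{1,2}_0(B_j)$, and set $v_j := f - u_j$, so that $v_j \in H^{1,2}_0(B_j)$ and $\Delta v_j = \Delta f$ on $B_j$.

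The first key estimate bounds the averaged $L^1$-norm of the Poisson remainder $v_j$. Testing $\Delta v_j = \Delta f$ against $v_j$ itself and applying H\"older yields
\[
\int_{B_j}|\nabla v_j|^2 \di \meas_X = -\int_{B_j}v_j \Delta f \di \meas_X \le \|v_j\|_{L^{q'}(B_j)} \|\Delta f\|_{L^q(B_j)},
\]
with $q' = q/(q-1)$. Since $q \ge 2Q/(Q+2)$ is exactly the H\"older conjugate of the Sobolev exponent $p = 2Q/(Q-2)$ from Proposition \ref{prop:poincaresobolev}(5), we have $q' \le p$; H\"older then controls $\|v_j\|_{L^{q'}}$ by a power of $\meas_X(B_j)$ times $\|v_j\|_{L^p}$, and the Dirichlet Sobolev-Poincar\'e inequality applied to $v_j \in H^{1,2}_0(B_j)$ feeds the result back into $\|\nabla v_j\|_{L^2}$. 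After an absorption step one obtains
\[
\intav_{B_j}|v_j|\di \meas_X \le C \cdot 2^{2j}\left(\intav_{B_j}|\Delta f|^q \di \meas_X\right)^{1/q},
\]
with $C$ depending only on the PI data (the non-fullness hypothesis required by Proposition \ref{prop:poincaresobolev}(5) being available because $2^j \le R/2$ sits strictly inside the ambient space up to a fixed buffer).

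The telescoping step is the heart of the argument. For $j \le j_0 - 1$, both $u_j$ and $u_{j+1}$ are harmonic on $B_j$ (the latter on the larger ball $B_{j+1} \supset B_j$), so $u_{j+1} - u_j$ is harmonic on $B_j$, and QL gives
\[
|\nabla(u_{j+1} - u_j)|(y) \le \frac{C}{2^j}\intav_{B_j}|u_{j+1}-u_j|\di \meas_X \le \frac{C}{2^j}\left(\intav_{B_j}|v_j|\di \meas_X + \intav_{B_j}|v_{j+1}|\di \meas_X\right).
\]
Doubling converts $\intav_{B_j}|v_{j+1}|$ into a bounded multiple of $\intav_{B_{j+1}}|v_{j+1}|$, and the Poisson estimate above then bounds the right-hand side by $C \cdot 2^j [(\intav_{B_j}|\Delta f|^q)^{1/q} + (\intav_{B_{j+1}}|\Delta f|^q)^{1/q}]$. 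For the top scale, QL applied to $u_{j_0}$ directly gives $|\nabla u_{j_0}|(y) \le (C/r)\intav_B |f|\di \meas_X + C \cdot 2^{j_0}(\intav_{B_{j_0}}|\Delta f|^q)^{1/q}$ after pulling the $|f|$-average from $B_{j_0}$ up to $B$ via the doubling and the fact that $2^{j_0} \sim R \ge r$.

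Summing the telescoping identity $u_j(y) = u_{j_0}(y) - \sum_{i=j}^{j_0-1}(u_{i+1}-u_i)(y)$ at the gradient level reproduces the geometric sum in the statement, provided we can pass to the limit $|\nabla u_j|(y) \to |\nabla f|(y)$ as $j \to -\infty$. This limit step is, in my view, the main obstacle. The energy estimate above ensures $\intav_{B_j}|\nabla v_j|^2 \to 0$, hence $\intav_{B_j}|\nabla u_j|^2 \to (|\nabla f|(y))^2$ at Lebesgue points of $|\nabla f|^2$; but converting this averaged convergence into a pointwise bound at $y$ requires harmonicity of $u_j$. The QL hypothesis, applied to differences of the $u_j$'s across consecutive scales, forces $|\nabla u_j|$ to oscillate only mildly on comparable balls, so a diagonal-subsequence argument combined with the lower semicontinuity of $|\nabla \cdot|$ under weak $L^2$-convergence upgrades the averaged convergence to the required pointwise upper bound at $\meas_X$-a.e. $y$, closing the proof.
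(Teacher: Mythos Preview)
The paper does not give its own proof of this theorem; it simply cites \cite[Theorem~3.1 (or Proposition~3.2)]{JKY}. Your harmonic-approximation-plus-telescoping scheme is exactly the argument carried out in that reference, so the strategy is correct and there is nothing to compare at the level of approach.

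That said, your closing step is not an argument. You correctly observe that the telescoped QL bounds control $|\nabla u_J|$ and that $\intav_{B_J}|\nabla v_J|^2\to 0$, but then you invoke ``a diagonal-subsequence argument combined with the lower semicontinuity of $|\nabla\cdot|$ under weak $L^2$-convergence'' to pass to $|\nabla f|(y)$. Lower semicontinuity under weak convergence gives inequalities in the wrong direction for this purpose, and no subsequence is needed. The clean way to finish is: the telescoped QL estimates actually give a uniform $L^\infty$ bound $\|\nabla u_J\|_{L^\infty(\frac{1}{2}B_J)}\le C\cdot(\text{sum})$, since each QL bound holds on $\frac{1}{2}B_j\supset \frac{1}{2}B_J$ for $j\ge J$. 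Then by the triangle inequality in $L^2$ and doubling,
\[
\Bigl(\intav_{\frac{1}{2}B_J}|\nabla f|^2\Bigr)^{1/2}\le \|\nabla u_J\|_{L^\infty(\frac{1}{2}B_J)}+C\Bigl(\intav_{B_J}|\nabla v_J|^2\Bigr)^{1/2}\le C\cdot(\text{sum})+C\cdot 2^J\Bigl(\intav_{B_J}|\Delta f|^q\Bigr)^{1/q},
\]
and letting $J\to-\infty$ gives $|\nabla f|(y)\le C\cdot(\text{sum})$ at every Lebesgue point of $|\nabla f|^2$. A minor secondary point: when $r<R$ you should take the top scale to be $j_0=\lfloor\log_2(r/2)\rfloor$ rather than $\lfloor\log_2(R/2)\rfloor$, so that $B_{j_0}\subset B$ and the Dirichlet problem for $f$ on $B_{j_0}$ makes sense; the extra terms in the stated sum between $\lfloor\log_2(r/2)\rfloor$ and $\lfloor\log_2(R/2)\rfloor$ are harmless padding on the right-hand side.
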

Then we can prove a Gaussian gradient estimate on the heat kernel $p$ of $X$ (see also \cite{Jiang15, JiangLiZhang} for $\RCD$ spaces).
\begin{theorem}[Gradient estimate on the heat kernel]\label{thm:gradient}
If $X$ is a PI geodesic space with a QL, then the heat kernel $p$ of $X$ satisfies for $t\le 1$
\begin{equation}
    |\nabla_xp(x, y, t)|\le \frac{C}{\sqrt{t}\meas_X(B_{\sqrt{t}}(x))}\exp \left(-\frac{\dist_X(x, y)^2}{Ct}\right),
\end{equation}
where $C>1$ depends only on $c_v(2), c_p(2), \Lambda$ and $c_h(1)$.
\end{theorem}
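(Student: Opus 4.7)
Fix $x_0, y_0 \in X$ and $t_0 \in (0,1]$, and set $r := \sqrt{t_0}/2$. The plan is to apply Theorem \ref{thm:Poisson} to $u(z) := p(z, y_0, t_0)$ on $B := B_r(x_0)$, regarded as a solution of the Poisson equation $\Delta u(z) = \partial_t p(z, y_0, t_0)$. The hypotheses are satisfied because $u$ and $\Delta u$ lie in $L^\infty$ globally by the $n=0,1$ cases of Theorem \ref{thm:gaus}. Choosing $R := r \le 1/2$, Theorem \ref{thm:Poisson} yields, for $\meas_X$-a.e.\ $x \in \tfrac{1}{2}B$,
\begin{equation*}
    |\nabla u|(x) \le C\left(\frac{1}{r}\intav_{B}|u|\di \meas_X + \sum_{j=-\infty}^{\lfloor \log_2 (r/2)\rfloor} 2^j \left(\intav_{B_{2^j}(x)}|\Delta u|^q\di \meas_X\right)^{1/q}\right),
\end{equation*}
where $C$ depends only on $c_v(2), c_p(2), c_h(1)$ and $\Lambda$.

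\textbf{Uniformizing the Gaussian bounds.} Theorem \ref{thm:gaus} provides the pointwise bounds $|u(z)| + t_0|\Delta u(z)| \le C\meas_X(B_{\sqrt{t_0}}(z))^{-1}\exp(-\dist_X(z,y_0)^2/(Ct_0))$. To feed these into the estimate above I use two routine reductions, valid throughout $B$ (which contains every ball $B_{2^j}(x)$ appearing in the sum, since $x \in \tfrac{1}{2}B$ and $2^j \le r/2$). First, local volume doubling on balls of radius at most $2$ gives $\meas_X(B_{\sqrt{t_0}}(z)) \ge c\,\meas_X(B_{\sqrt{t_0}}(x_0))$. Second, the triangle inequality $\dist_X(z,y_0) \ge \dist_X(x_0, y_0) - \sqrt{t_0}$ combined with the elementary estimate $(a-b)^2 \ge \tfrac{1}{2}a^2 - b^2$ gives
\begin{equation*}
    \frac{\dist_X(z,y_0)^2}{Ct_0} \ge \frac{\dist_X(x_0, y_0)^2}{2Ct_0} - \frac{1}{C},
\end{equation*}
which absorbs the cross term at the cost of enlarging the exponential constant. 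Hence both $|u(z)|$ and $t_0|\Delta u(z)|$ are bounded throughout $B$ by $CG$, where $G := \meas_X(B_{\sqrt{t_0}}(x_0))^{-1}\exp(-\dist_X(x_0, y_0)^2/(C't_0))$.

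\textbf{Summing and concluding.} The first term in the Poisson estimate contributes at most $CG/r = 2CG/\sqrt{t_0}$. For the sum, each averaged integrand is bounded pointwise by $CG/t_0$, and $\sum_{j:\,2^j \le r/2} 2^j \le r$ as a geometric series of ratio $2$, so the sum is at most $CGr/t_0 = CG/(2\sqrt{t_0})$. Adding the two contributions yields $|\nabla u|(x) \le C'' G/\sqrt{t_0}$ for $\meas_X$-a.e.\ $x \in \tfrac{1}{2}B$, which is exactly the claimed inequality at $x_0$: indeed, this very estimate shows $u \in \mathrm{Lip}_{\mathrm{loc}}(\tfrac{1}{2}B)$, so by Proposition \ref{prop:poincaresobolev}(6) $|\nabla u|$ coincides with the local slope of a locally Lipschitz representative and the $\meas_X$-a.e.\ bound transfers pointwise.

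\textbf{Expected main obstacle.} The delicate step is the uniform extraction of the Gaussian factor from inside the averaged $L^q$-norms. For very small $2^j$ the exponential factor $\exp(-\dist_X(z,y_0)^2/(Ct_0))$ is essentially $\exp(-\dist_X(x_0,y_0)^2/(Ct_0))$ for free, but at intermediate scales $2^j \sim \sqrt{t_0}$ the cross term $2\cdot 2^j \dist_X(x_0,y_0)$ from the expansion $(\dist_X(x_0,y_0) - 2^j)^2 = \dist_X(x_0,y_0)^2 - 2\cdot 2^j \dist_X(x_0,y_0) + 2^{2j}$ must be absorbed into the quadratic term; this is possible precisely because the chosen scale $r = \sqrt{t_0}/2$ is tuned to the Gaussian width. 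Once this absorption is in place, the remaining ingredients (volume doubling and the geometric summation in $j$) are immediate consequences of PI.
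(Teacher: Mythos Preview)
Your argument is correct and is precisely the approach the paper invokes: the paper's proof consists solely of a citation to \cite[Theorem 3.2]{Jiang15} together with the Gaussian bounds \eqref{eqn:heat kernel bounds}, and you have faithfully reconstructed that argument by applying Theorem~\ref{thm:Poisson} to $u=p(\cdot,y_0,t_0)$ on a ball of radius $\sqrt{t_0}/2$, uniformizing the $n=0,1$ bounds of Theorem~\ref{thm:gaus} via doubling and the elementary $(a-b)^2\ge\tfrac12 a^2-b^2$, and summing the geometric series in $j$. The only point worth noting is that the hypothesis $u\in D(\Delta,B)$ with $\Delta u=\partial_t p$ is standard (since $p(\cdot,y_0,t_0)=h_{t_0/2}p(\cdot,y_0,t_0/2)$ with $p(\cdot,y_0,t_0/2)\in L^2$), so nothing is missing.
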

\begin{proof}
With no change of the proof of \cite[Theorem 3.2]{Jiang15} together with (\ref{eqn:heat kernel bounds}), we can realize this. 
\end{proof}
In order to provide applications of the result above, let us prepare the following
which is justified by the same arguments as in the proof of \cite[Lemma 2.7]{BGHZ} together with (\ref{eq:expgrowth}), where our current assumption, IH for $X$, does not play a role in the proof. 
\begin{lemma}\label{lem:integralgaussian}
Assume that $X$ is a geodesic space with a local volume doubling condition. 
Then for all $t \in (0, 1]$, $\alpha \in \mathbb{R}$ and $\beta \in (0, \infty)$ we have
\begin{equation}
    \int_X\meas_X(B_{\sqrt{t}}(x))^{\alpha}\exp\left(-\frac{\beta\dist_X(x,y)^2}{t}\right)\di \meas_X(x) \le C\meas_X(B_{\sqrt{t}}(y))^{\alpha+1},
\end{equation}
where $C>0$ depends only on $c_v(1), \alpha$ and $\beta$.
\end{lemma}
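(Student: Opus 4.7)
The plan is to decompose $X$ into annuli around $y$ and play the Gaussian decay against the measure growth. Set $A_0:=B_{\sqrt{t}}(y)$ and $A_k:=B_{(k+1)\sqrt{t}}(y)\setminus B_{k\sqrt{t}}(y)$ for $k\geq 1$, giving a disjoint decomposition $X=\bigsqcup_{k\geq 0}A_k$. On $A_k$ the Gaussian factor is dominated by $e^{-\beta k^2}$, and the task is to control the measure factor $\meas_X(B_{\sqrt{t}}(x))^{\alpha}$ uniformly in $x\in A_k$ and in $t\in(0,1]$ by $\meas_X(B_{\sqrt{t}}(y))^{\alpha}$ up to a factor growing at most exponentially in $k$.

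For the uniform comparison I would use a scaled analogue of \eqref{eq:expgrowth}. Running the proof of \eqref{eq:expgrowth} verbatim, but partitioning $B_{(k+2)\sqrt{t}}(z)\setminus B_{\sqrt{t}}(z)$ into annuli of width $\sqrt{t}\leq 1$ (instead of width $1$) and comparing adjacent annuli via the local doubling constant $c_v(2\sqrt{t})\leq c_v(2)$, one obtains
\begin{equation*}
\meas_X(B_{(k+2)\sqrt{t}}(z))\leq Ce^{Ck}\meas_X(B_{\sqrt{t}}(z)), \qquad z\in X,\ k\geq 0,\ t\in(0,1],
\end{equation*}
with $C$ depending only on $c_v$. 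Applying this with $z=x\in A_k$ together with $B_{\sqrt{t}}(x)\subset B_{(k+2)\sqrt{t}}(y)$, and with $z=y$ together with $B_{\sqrt{t}}(y)\subset B_{(k+2)\sqrt{t}}(x)$, yields
\begin{equation*}
C^{-1}e^{-Ck}\meas_X(B_{\sqrt{t}}(y))\leq \meas_X(B_{\sqrt{t}}(x))\leq Ce^{Ck}\meas_X(B_{\sqrt{t}}(y)),
\end{equation*}
which handles $\alpha$ of either sign, and in the same way $\meas_X(A_k)\leq Ce^{Ck}\meas_X(B_{\sqrt{t}}(y))$.

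Combining these estimates gives, for each $k\geq 0$,
\begin{equation*}
\int_{A_k}\meas_X(B_{\sqrt{t}}(x))^{\alpha}\exp\!\left(-\frac{\beta\dist_X(x,y)^2}{t}\right)\di\meas_X(x)\leq Ce^{-\beta k^2+C(|\alpha|+1)k}\meas_X(B_{\sqrt{t}}(y))^{\alpha+1}.
\end{equation*}
Since $\beta>0$, the series $\sum_{k\geq 0}e^{-\beta k^2+C(|\alpha|+1)k}$ converges to a finite constant depending only on $\beta$, $|\alpha|$, and $c_v$, so summing in $k$ produces the claimed inequality.

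The main obstacle, and really the only non-routine point, is establishing the scale-invariant exponential growth bound displayed above uniformly for $t\in(0,1]$. One has to go slightly beyond the literal form of \eqref{eq:expgrowth}, which is stated for balls of radius $1$, and re-run its proof at the base scale $\sqrt{t}$; the crucial observation is that the width-$\sqrt{t}$ annular comparison only invokes doubling at radii $\leq 2\sqrt{t}\leq 2$, so the constant remains controlled by $c_v(2)$ and does not degenerate as $t\to 0$. Once this is in hand, the rest is a textbook annular Gaussian estimate.
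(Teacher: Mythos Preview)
Your proof is correct and follows essentially the same approach indicated by the paper, which defers to the argument of \cite[Lemma 2.7]{BGHZ} combined with \eqref{eq:expgrowth}. Your explicit observation that one must rerun the proof of \eqref{eq:expgrowth} at base scale $\sqrt{t}$ (rather than scale $1$), and that this only invokes doubling at radii bounded by a fixed constant so the resulting constant is uniform in $t\in(0,1]$, is exactly the point the paper leaves implicit in that citation.
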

The following will play a key role to give a characterization of $\RCD$ spaces, see Theorem \ref{thm:fromalmostrcdtorcd}.
\begin{corollary}\label{cor:lipreg}
    Assume that $X$ is a PI geodesic space with a QL. Then for all $f \in L^{\infty}\cap L^{2}(X)$ and $t>0$, we know $|\nabla h_tf| \in L^{\infty}(X)$.
\end{corollary}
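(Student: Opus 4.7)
The plan is to exploit the heat-kernel representation of the semigroup
\[
  h_tf(x)=\int_X p(x,y,t)\,f(y)\,\di\meas_X(y),
\]
which is at our disposal because $X$ is an IH PI geodesic space (so by Theorem~\ref{thm:gaus} the kernel $p$ exists and is H\"older continuous). Differentiating formally under the integral gives the pointwise majorization
\[
  |\nabla h_tf(x)|\le \|f\|_{L^\infty}\int_X|\nabla_x p(x,y,t)|\,\di\meas_X(y),
\]
so the job reduces to integrating the Gaussian gradient bound from Theorem~\ref{thm:gradient}.

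First I would dispose of small times $t\in(0,1]$. Plug the estimate
\[
  |\nabla_x p(x,y,t)|\le \frac{C}{\sqrt{t}\,\meas_X(B_{\sqrt{t}}(x))}\exp\!\left(-\frac{\dist_X(x,y)^2}{Ct}\right)
\]
into the integral and apply Lemma~\ref{lem:integralgaussian} with $\alpha=0$ and $\beta=1/C$: the result is
\[
  \int_X|\nabla_x p(x,y,t)|\,\di\meas_X(y)\le \frac{C'}{\sqrt{t}},
\]
so $\|\nabla h_tf\|_{L^\infty}\le (C'/\sqrt{t})\,\|f\|_{L^\infty}$. For large $t>1$, I would invoke the semigroup identity $h_tf=h_1(h_{t-1}f)$ together with the $L^\infty$-contractivity $\|h_{t-1}f\|_{L^\infty}\le\|f\|_{L^\infty}$ (which follows from the Markov property $p\ge0$ and $\int_X p(x,y,t)\,\di\meas_X(y)\le 1$, valid in the IH PI setting). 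Applying the small-time bound at time $1$ to $h_{t-1}f$ yields $\|\nabla h_tf\|_{L^\infty}\le C'\|f\|_{L^\infty}$.

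The main obstacle is making the formal differentiation rigorous in the non-smooth framework, i.e.\ interchanging $\nabla_x$ with the integral against $\di\meas_X(y)$. The cleanest route is to test the desired gradient bound against Lipschitz functions with compact support: since $p(\cdot,y,t)\in H^{1,2}_{\mathrm{loc}}(X)$ and the Gaussian bound from Theorem~\ref{thm:gradient} is integrable in $y$ (by Lemma~\ref{lem:integralgaussian}), Fubini gives
\[
  \int_X h_tf\,\Delta\phi\,\di\meas_X = \int_X f(y)\int_X p(x,y,t)\Delta\phi(x)\,\di\meas_X(x)\,\di\meas_X(y)
\]
for any $\phi\in\mathrm{Test}(X)$, and the defining property of the minimal relaxed slope for $p(\cdot,y,t)$ (combined with IH) transfers the derivative onto $p$. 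Alternatively, since by Proposition~\ref{prop:heatconv} the curve $t\mapsto h_tf$ is smooth into $L^2$ and the kernel bounds above already place $h_tf$ into the Lipschitz class (via a standard telescoping/Poincar\'e argument as in Proposition~\ref{prop:PIStoL}), one can directly identify $\mathrm{Lip}(h_tf)$ with the integral on the right-hand side and conclude.
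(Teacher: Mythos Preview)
Your approach is essentially the same as the paper's: both use the heat-kernel representation, pass to a weak/tested formulation to justify the differentiation under the integral, apply the gradient bound of Theorem~\ref{thm:gradient}, and integrate via Lemma~\ref{lem:integralgaussian} to obtain $|\nabla h_tf|\le C/\sqrt{t}$. Your explicit handling of $t>1$ via the semigroup identity $h_tf=h_1(h_{t-1}f)$ and $L^\infty$-contractivity is a welcome addition, since Theorem~\ref{thm:gradient} is stated only for $t\le 1$ and the paper's proof leaves this reduction implicit.
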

\begin{proof}
    Since for all $\phi, \psi \in \mathrm{Lip}_c(X)$ we have
    \begin{equation}
        \int_X\psi \langle \nabla h_tf, \nabla \phi \rangle \di \meas_X =\int_X\int_X\langle \nabla_xp(x, y, t), \nabla \phi(x)\rangle \psi(x) f(y)\di \meas_X(x) \di \meas_X(y),
    \end{equation}
    we conclude by Lemma \ref{lem:integralgaussian} (see also \cite[subsection 3.1]{BGHZ}) that for $\meas_X$-a.e. $x \in X$
    \begin{align}
        |\nabla h_tf|(x) &\le C \int_X|\nabla_xp|\di \meas_X(y) \nonumber \\
        &\le \int_X \frac{C}{\sqrt{t}\meas_X(B_{\sqrt{t}}(x))}\exp \left(-\frac{\dist_X(x, y)^2}{Ct}\right) \di \meas_X \le \frac{C}{\sqrt{t}}.
    \end{align}
\end{proof}
\begin{theorem}[Existence of good cut-off function]\label{thm:cutoff}
Assume that $X$ is a PI geodesic space with a QL. Then for all compact subset $A$ and open neighborhood $U$ of $A$, there exists $\rho \in D(\Delta)$, called a good cut-off function in the sequel, such that $0 \le \rho \le 1$, that $\rho=1$ on $A$, that $\supp \rho \subset U$ and that
\begin{equation}
    |\nabla \rho| + |\Delta \rho| \in L^{\infty}(X).
\end{equation}
Moreover for all open ball $B$ of radius $1$ and $R \ge 1$, there exists $\phi \in D(\Delta)$ such that $0\le \phi \le 1$, $\phi|_{RB}=1$, $\supp \phi \subset (R+2)B$, and 
\begin{equation}
    |\nabla \phi| + |\Delta \phi| \le C, 
    \end{equation}
    where $C>1$ depends only on $c_v(4), c_p(2), \Lambda$ and $c_h(1)$.
\end{theorem}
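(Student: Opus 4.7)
The plan is the Cheeger--Colding/Mondino--Naber recipe: mollify a Lipschitz cut-off via the heat flow for a short time $t_0$, then compose with a smooth bump $\phi\colon\R\to[0,1]$ that snaps small values to $0$ and large values to $1$. By Proposition~\ref{prop:poincaresobolev}(1), $X$ is proper and geodesic. By compactness of $A$ there exists $r_0\in(0,1]$ with $B_{3r_0}(A)\subset U$. Take the Lipschitz function
$$\chi(x):=\max\{0,\min\{1,2-r_0^{-1}\dist_X(x,B_{r_0}(A))\}\},$$
so that $0\le\chi\le 1$, $\chi\equiv 1$ on $B_{r_0}(A)\supset A$, $\supp\chi\subset\overline{B_{2r_0}(A)}\subset U$, and $\chi\in L^{\infty}\cap L^2(X)$.

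Set $\rho_t:=h_t\chi$. Then $0\le\rho_t\le 1$, and $\rho_t\in D(\Delta)$ for every $t>0$ by Proposition~\ref{prop:heatconv}, while $|\nabla\rho_t|\in L^\infty(X)$ by Corollary~\ref{cor:lipreg}. Using $\Delta\rho_t(x)=\partial_t\rho_t(x)=\int_X\partial_tp(x,y,t)\chi(y)\,d\meas_X(y)$ together with the $n=1$ case of Theorem~\ref{thm:gaus} and Lemma~\ref{lem:integralgaussian} (with $\alpha=-1$) I obtain $\|\Delta\rho_t\|_{L^\infty}\le C/t$. For the pointwise estimate at $x\in A$, since $\chi\equiv 1$ on $B_{r_0}(x)$,
$$1-\rho_t(x)\le\int_{\dist_X(x,y)\ge r_0}p(x,y,t)\,d\meas_X(y)\le Ce^{-r_0^2/(2Ct)},$$
the last step obtained by splitting $e^{-\dist_X(x,y)^2/(Ct)}\le e^{-r_0^2/(2Ct)}e^{-\dist_X(x,y)^2/(2Ct)}$ on $\{\dist_X(x,\cdot)\ge r_0\}$ and invoking Theorem~\ref{thm:gaus} and Lemma~\ref{lem:integralgaussian}. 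An identical estimate, now using $\dist_X(x,\supp\chi)\ge r_0$ for $x\in X\setminus U$, yields $\rho_t(x)\le Ce^{-r_0^2/(2Ct)}$ there. Pick $t_0>0$ small enough that both quantities drop below $1/4$.

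Fix $C^\infty$ $\phi\colon\R\to[0,1]$ with $\phi\equiv 0$ on $(-\infty,1/4]$, $\phi\equiv 1$ on $[3/4,\infty)$, and set $\rho:=\phi(\rho_{t_0})$. Then $\rho\equiv 1$ on $A$, $\supp\rho\subset U$, and $0\le\rho\le 1$. The $C^2$-chain rule, valid in our IH setting because $\rho_{t_0}\in D(\Delta)\cap L^\infty$ and $|\nabla\rho_{t_0}|\in L^\infty$, gives
$$\nabla\rho=\phi'(\rho_{t_0})\nabla\rho_{t_0},\qquad \Delta\rho=\phi'(\rho_{t_0})\Delta\rho_{t_0}+\phi''(\rho_{t_0})|\nabla\rho_{t_0}|^2,$$
so that both quantities lie in $L^\infty(X)$. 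For the quantitative statement, let $p_0$ denote the center of $B$ and apply the above with $A:=\{y\in X:\dist_X(y,p_0)\le R\}$ and $U:=(R+2)B$. These sets have distance $\ge 2$ in the geodesic space $X$, so $r_0:=1/2$ works independently of $R$; all Gaussian-type constants used depend only on $c_v(4),c_p(2),\Lambda,c_h(1)$, producing a uniform bound $|\nabla\rho|+|\Delta\rho|\le C$.

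The main obstacle is the $C^2$-chain rule for the Laplacian in the IH metric measure framework: one must verify $\phi(\rho_{t_0})\in D(\Delta)$ with the formula displayed above. This may be carried out either by polynomial approximation of $\phi$ combined with integration by parts against compactly supported Lipschitz test functions --- the $L^\infty$ bounds on $|\nabla\rho_{t_0}|$ and $|\Delta\rho_{t_0}|$ absorbing all growth --- or by directly appealing to standard $\Gamma$-calculus identities in the Dirichlet form associated with the IH structure. Once that chain rule is in hand, the remainder of the argument is Gaussian bookkeeping with Theorem~\ref{thm:gaus}, Theorem~\ref{thm:gradient}, and Lemma~\ref{lem:integralgaussian}.
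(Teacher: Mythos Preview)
Your proof is correct and follows the same Mondino--Naber recipe as the paper: heat-mollify a Lipschitz cut-off, then compose with a smooth bump, invoking the chain rule \eqref{eq:formula} (which the paper cites from \cite{GP}) for the final step. The minor differences are that you bound $|\Delta h_t\chi|$ via the $\partial_t p$ estimate in Theorem~\ref{thm:gaus} rather than via the gradient estimate on $p$ plus integration by parts, and you control $|h_t\chi-\chi|$ by Gaussian tails rather than by integrating the Laplacian bound in $t$; both variants are equally valid.

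The one genuine simplification is in the quantitative second statement: the paper builds $\phi$ from a partition of unity over a maximal $1$-separated net (requiring the overlap bound \eqref{eq:number4}), whereas you apply the first part directly with $A=\overline{RB}$ and $U=(R+2)B$, observing that $r_0$ can be taken uniformly (e.g.\ $r_0=1/2$) because $\dist_X(A,X\setminus U)\ge 2$ independently of $R$, and all Gaussian constants depend only on scale-$1$ data. This is cleaner and avoids the covering argument. One small point to tidy: to secure $\supp\rho\subset U$ (rather than merely $\supp\rho\subset\overline U$), choose $r_0$ so that $\overline{B_{3r_0}(A)}\subset U$, which is possible by properness; then your tail estimate actually gives $\rho_{t_0}\le 1/4$ on the complement of a compact subset of $U$.
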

\begin{proof}
    For the first statement about $\rho$, by applying the standard construction of a partition of unity, 
    it is enough to discuss only the case when $A$ is a closed ball $\bar B$ and $U=(1+\epsilon)B$ for some $\epsilon>0$.

    Let us follow the same strategy of the proof of \cite[Lemma 3.1]{MN}. 
    Take a cut-off function $\psi \in \mathrm{Lip}_c(X)$ with $\psi|_B=1$, $\supp \psi \subset (1+\epsilon)B$ and 
    \begin{equation}
        |\nabla \psi| \le \frac{1}{\epsilon}.
    \end{equation}
    Note that for any $t \le 1$, Theorems \ref{thm:gaus} and \ref{thm:gradient} yield
    \begin{align}\label{eq:laplacianbound}
        |\Delta h_t\psi(y)| \nonumber &=\left| \int_X\Delta_yp(x, y, t)\psi(x)\di \meas_X(x)\right| \nonumber \\
        &=\left| \int_X\Delta_xp(x, y, t)\psi(x)\di \meas_X(x)\right| \nonumber \\
        &=\left| \int_X\langle \nabla_xp(x, y, t), \nabla \psi(x)\rangle \di \meas_X(x)\right| \nonumber \\
        &\le \int_B\frac{C}{\epsilon \sqrt{t}\meas_X(B_{\sqrt{t}}(x))}\exp \left(-\frac{\dist_X(x,y)^2}{ct}\right) \di \meas_X(x) \le \frac{C}{\epsilon \sqrt{t}},
    \end{align}
    where we used Lemma \ref{lem:integralgaussian} in the last inequality. Thus
    \begin{equation}\label{eq:difference}
        |h_t\psi(x)-\psi(x)| \le C\int_0^t|\Delta h_s\psi(y)|\di s \le \frac{C}{\epsilon}\int_0^t\frac{1}{\sqrt{s}}\di s\le \frac{C\sqrt{t}}{\epsilon}.
    \end{equation}
    Thus there exists $t_0 \in (0, 1]$ such that $h_{t_{0}}\psi(y) \in [\frac{3}{4}, 1]$ holds for any $y \in B$ and that $h_{t_0}\psi(y) \in [0, \frac{1}{4}]$ holds for any $y \not \in (1+\epsilon)B$ (note that $t_0$ can be taken quantitatively).
    Then finding a smooth function $f:[0,1] \to [0,1]$ satisfying that $f=1$ on $[\frac{3}{4}, 1]$ and that $f=0$ on $[0, \frac{1}{4}]$, the function $\rho:=f(h_{t_0}\psi)$ provides a desired cut-off function, where the desired gradient estimate comes from Theorem \ref{thm:Poisson}, and we immediately used 
    a formula (see for instance \cite[Proposition 5.2.3]{GP}):
    \begin{equation}\label{eq:formula}
        \Delta (f \circ \phi)=\frac{d^2 f}{d x^2}(\phi)|\nabla \phi|^2+\frac{d f}{d x}(\phi) \Delta \phi 
    \end{equation}
for all $f \in C^{1,1}(\mathbb{R})$ and $\phi \in D(\Delta)$. 

    In order to prove the second statement, take a maximal $1$-separated subset $\{x_i\}_i$ of $X$, consider the open ball $B_i$ of radius $1$ centered at $x_i$, and then take a good cut-off $\rho_i \in D(\Delta)$ with $\rho_i=1$ on $B_i$ and $\supp \rho_i\subset 2B_i$ constructed above (as $\epsilon=1$). Note that for any $i$, we have
    \begin{equation}\label{eq:number4}
        \sharp \left\{j ; 2B_i \cap 2B_j \neq \emptyset \right\} \le C,
    \end{equation}
where $C>1$ depends only on $c_v(4)$. Let 
\begin{equation}
    \phi_i:=\frac{\rho_i}{\sum_j\rho_j}.
\end{equation}
    For a fixed ball $B$ of radius greater than $1$, consider 
    \begin{equation}
        \mathbb{N}_B:=\left\{i ; 2B_i \cap B \neq \emptyset\right\},
    \end{equation}
    and then define
    \begin{equation}
        \phi:=\sum_{i \in \mathbb{N}_B}\phi_i,
    \end{equation}
    which provides a desired function because of (\ref{eq:number4}).
    \end{proof}

    \begin{remark}[Test function]\label{rem:test}
Let us assume that $X$ is a PI geodesic space with a QL.
Then, as in the case of $\RCD$ spaces, we can, of course, define test functions by (\ref{eq:testdef}).
It is proved in \cite[Proposition 3.1.3]{Gigli} that $\mathrm{Test}(X)$ is an algebra if $X$ is an $\RCD(K, \infty)$ space for some $K \in \mathbb{R}$.
   However, in this general setting, we do not know this property, more precisely, it is unclear whether $\langle \nabla f, \nabla g \rangle \in H^{1,2}(X)$ for $f, g \in \mathrm{Test}(X)$. We will need to take care about this later, for instance, in (the proof of) Proposition \ref{prop:charbe}. See also Question \ref{ques:1}.
\end{remark}
Though the following does not play an essential role in the paper, this has an independent interest, from the point of view of an existence of \textit{Laplacian cut-off functions}, see \cite[Definition 2.1]{Gun}.
\begin{proposition}[Laplacian cut-off function]\label{rem:lapcut}
    Assume that $X$ is an $\RCD(K, N)$ space for some $K\in \mathbb{R}$ and some $N \in [1, \infty)$, and let us take an open ball $B$ of radius $1$.
    Then there exist a sequence $R_i \uparrow \infty$ and a sequence of $\phi_i \in \mathrm{Test}_c(X)$ such that $0\le \phi_i \le 1$, that $\phi_i|_{R_iB}=1$, that $\supp \phi_i \subset R_{i+1}B$, and that
\begin{equation}
    \|\nabla \phi_i\|_{L^{\infty}}+ \|\Delta \phi_i\|_{L^{\infty}} \to 0.
\end{equation}
\end{proposition}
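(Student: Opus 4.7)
The plan is to revisit the good cut-off construction in Theorem \ref{thm:cutoff}, but to spread the transition over an annulus whose width $\delta_i$ grows to infinity, so that both $\|\nabla \phi_i\|_\infty$ and $\|\Delta \phi_i\|_\infty$ inherit a decaying factor $\delta_i^{-1}$. Since $X$ is $\RCD(K,N)$, it is a PI geodesic space satisfying QL (Theorems \ref{thm:lpircd} and \ref{thm:qLCh}), so the heat-flow estimates derived inside the proof of Theorem \ref{thm:cutoff} are available. Let $x_0$ denote the centre of $B$, choose $R_i\uparrow\infty$ with $\delta_i:=R_{i+1}-R_i-1\to\infty$, and set
\[
\psi_i(x) := \min\bigl(1,\,\max\bigl(0,\,1-\delta_i^{-1}(\dist_X(x,x_0)-R_i)\bigr)\bigr) \in \mathrm{Lip}_c(X),
\]
so that $\psi_i\equiv 1$ on $R_iB$, $\supp\psi_i\subset \overline{B_{R_i+\delta_i}(x_0)}\subset R_{i+1}B$, and $|\nabla\psi_i|\le \delta_i^{-1}$ $\meas_X$-a.e.

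\textbf{Heat regularization and estimates.} Set $u_i:=h_1\psi_i$. The Bakry--\'Emery pointwise gradient estimate $|\nabla h_tf|^2\le e^{-2Kt}h_t(|\nabla f|^2)$ (valid since $X$ is $\RCD(K,\infty)$) yields $\|\nabla u_i\|_\infty\le e^{-K}/\delta_i$. Repeating the computation \eqref{eq:laplacianbound} verbatim with the role of $1/\epsilon$ played by $\delta_i^{-1}$ and $t=1$ gives $\|\Delta u_i\|_\infty\le C/\delta_i$. Integrating $\partial_t h_t\psi_i=\Delta h_t\psi_i$ in time as in \eqref{eq:difference} gives $\|u_i-\psi_i\|_\infty\le C/\delta_i$, so that for all sufficiently large $i$ we have $u_i\ge 3/4$ on $R_iB$ and $u_i\le 1/4$ outside $B_{R_i+\delta_i}(x_0)$. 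Moreover, the $\RCD$ heat-flow smoothing sends $L^\infty\cap H^{1,2}$ data into $\mathrm{Test}(X)$, so $u_i\in\mathrm{Test}(X)$ and in particular $\Delta u_i,|\nabla u_i|^2\in H^{1,2}(X)$.

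\textbf{Truncation, conclusion, and main obstacle.} Fix a smooth $f:\mathbb{R}\to[0,1]$ with $f\equiv 0$ on $(-\infty,1/4]$ and $f\equiv 1$ on $[3/4,\infty)$, and set $\phi_i:=f(u_i)$. For all large $i$, $\phi_i\equiv 1$ on $R_iB$ and $\phi_i\equiv 0$ outside $B_{R_i+\delta_i}(x_0)$, so $\supp\phi_i\subset R_{i+1}B$ is compact. The chain rule \eqref{eq:formula} yields
\[
\|\nabla\phi_i\|_\infty \le \|f'\|_\infty\|\nabla u_i\|_\infty,\qquad \|\Delta\phi_i\|_\infty \le \|f''\|_\infty\|\nabla u_i\|_\infty^2+\|f'\|_\infty\|\Delta u_i\|_\infty,
\]
both of which tend to $0$. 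Since $X$ is $\RCD(K,\infty)$, smooth composition preserves $\mathrm{Test}(X)$ (cf.\ Remark \ref{rem:test}), so $\phi_i\in\mathrm{Test}_c(X)$; after discarding finitely many initial indices, the desired sequence is obtained. The subtle point is that the heat time $t=1$ must be kept fixed (so that the constants $c_v(2),c_p(2),c_h(1)$ feeding into \eqref{eq:laplacianbound} stay uniform in $i$) while only the spatial width $\delta_i\to\infty$ is used to force all error terms to decay; the verification that $\phi_i$ actually lies in $\mathrm{Test}_c(X)$, rather than being a mere good cut-off, is then standard once the $\RCD$-smoothing of $h_t$ is invoked.
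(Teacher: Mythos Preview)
Your proof is correct and follows essentially the same approach as the paper's: both start from Lipschitz cut-offs $\psi_i$ with $\|\nabla\psi_i\|_\infty\to 0$ (you parametrize this by the annulus width $\delta_i\to\infty$, the paper takes $\psi_i|_{iB}=1$, $\supp\psi_i\subset i^2B$, $|\nabla\psi_i|\le 1/i$), apply the heat flow at the fixed time $t=1$, control $\|\Delta h_1\psi_i\|_\infty$ via \eqref{eq:laplacianbound}, control $\|\nabla h_1\psi_i\|_\infty$ via the Bakry--\'Emery gradient estimate, and then compose with a smooth plateau function using \eqref{eq:formula}. The only cosmetic difference is that for the $\mathrm{Test}_c(X)$ membership of $\phi_i=f(u_i)$ the paper points directly to \eqref{eq:formula} together with the Hessian bound \eqref{eq:bine} (which gives $|\nabla u_i|^2\in H^{1,2}$), whereas you phrase this as ``smooth composition preserves $\mathrm{Test}(X)$''; these amount to the same thing in the $\RCD$ setting.
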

\begin{proof}
    For any $i \ge 1$, take a Lipschitz cut-off $\psi_i$ on $X$ with $\psi_i|_{iB}=1$, $\supp \psi_i \subset i^2B$ and $|\nabla \psi_i| \le \frac{1}{i}$. Then considering $h_t\psi_i$ with the same arguments as in the proof of Theorem \ref{thm:cutoff} by taking $t=1$ (then $|\Delta h_t\psi_i|$ is small), we get the conclusions, except for a result that $\phi_i=f(h_t\psi_i) \in \mathrm{Test}(X)$, where we immediately used the \textit{Bakry-\'Emery gradient estimate} (see \cite[Corollary 4.3]{Savare}):
\begin{equation}
    |\nabla h_t\psi_i| \le e^{-Kt}h_t|\nabla \psi_i| \le e^{-tK}\|\nabla \psi_i\|_{L^{\infty}} \le \frac{e^{-tK}}{i}.
\end{equation} 
Then, the remaining one, $\phi_i \in \mathrm{Test}(X)$, is an easy consequence of (\ref{eq:formula}) together with (\ref{eq:bine}).
\end{proof}
\section{Local lower bound on Ricci curvature}\label{sec:3}
Let $X$ be an IH metric measure space.
The purpose of this section is to discuss \textit{local} lower bounds on Ricci curvature in a synthetic sense, more precisely, from the point of view of the Bakry-\'Emery theory \cite{BE} again, which is different from the \textit{optimal transportation theory}, see for instance \cite{AmbrosioMondinoSavare, ErbarKuwadaSturm, LottVillani, Sturm06, Sturm06b}.
\subsection{Locally BE spaces}
Our working definition is the following, see \cite[Definition 2.1]{AmbrosioMondinoSavare2} in the case when $U=X$.
\begin{definition}[Locally BE space]\label{def:localrcd}
We say that an open subset $U$ of $X$ is a \textit{locally $\BE(K, N)$ space} for some $K \in \mathbb{R}$ and some $N \in [1, \infty]$ if
for any $f \in D(\Delta, U)$ with $\Delta f \in H^{1,2}(U)$,
\begin{equation}
\frac{1}{2}\int_U\Delta \phi |\nabla f|^2 \di \meas_X \ge \int_U\phi \left( \frac{(\Delta f)^2}{N}+\langle \nabla \Delta f, \nabla f\rangle +K|\nabla f|^2\right)\di \meas_X
\end{equation}
holds for any $\phi \in D(\Delta) \cap L^{\infty}(X)$ with compact support in $U$, $\phi \ge 0$ and $\Delta \phi \in L^{\infty}(X)$. 
\end{definition}
It is trivial that
\begin{itemize}
    \item if $X$ is a $\BE(K, N)$ space, then $X$ is a locally $\BE(K, N)$ space;
    \item in the case when $X$ is compact,  $X$ is a locally $\BE(K,N)$ space if and only if $X$ is a $\BE(K,N)$ space.
\end{itemize} The converse implication of the first statement above will be discussed in Proposition \ref{prop:LtoG} and Theorem \ref{thm:LtoG}. 

Thanks to the regularity result stated in Theorem \ref{thm:Poisson}, we provide  equivalent formulations of locally $\BE$ spaces, under PI with a QL, as follows.
\begin{proposition}\label{prop:charbe}
Assume that $X$ is a PI geodesic space with a QL.
Let $U$ be an open subset of $X$. Then for all $K \in \mathbb{R}$ and $N \in [1, \infty]$, the following two conditions are equivalent.
\begin{enumerate}
    \item $U$ is a locally $\BE(K, N)$ space.
    \item We have
\begin{align}\label{eq:weakboch}
    \frac{1}{2}\int_X\Delta \phi |\nabla f|^2\di \meas_X  \ge \int_X\left(\phi \frac{(\Delta f)^2}{N}-\phi (\Delta f)^2-\Delta f\langle \nabla f, \nabla \phi \rangle +K\phi|\nabla f|^2\right) \di \meas_X
\end{align}
for any $\phi \in D(\Delta) \cap L^{\infty}(X)$ with $\phi \ge 0$, compact support in $U$ and $\Delta \phi \in L^{\infty}(X)$, and for any $f \in D(\Delta)$ with compact support in $U$. 
\end{enumerate} 
\end{proposition}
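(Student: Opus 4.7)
The plan is to bridge the two conditions via the formal integration-by-parts identity
\[
\int \phi \, \langle \nabla \Delta f, \nabla f \rangle \di \meas_X \;=\; -\int \phi (\Delta f)^2 \di \meas_X - \int \Delta f \, \langle \nabla f, \nabla \phi \rangle \di \meas_X,
\]
which holds whenever $f \in D(\Delta, U)$ with $\Delta f \in H^{1,2}(U)$ and $\phi$ has compact support in $U$, obtained by using $\phi \Delta f \in H^{1,2}_0(U)$ as an admissible test function against $f$. Under this identity the integrands in (1) and (2) coincide, so the two statements differ only in the admissible class of pairs $(f, \phi)$: (1) requires $f \in D(\Delta, U)$ with $\Delta f \in H^{1,2}(U)$ (so that $\nabla \Delta f$ makes sense), while (2) requires $f \in D(\Delta)$ compactly supported in $U$. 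The task is to interpolate between these two classes.

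For (1)$\Rightarrow$(2), I take $f \in D(\Delta)$ with compact support in $U$ and regularize by the heat flow: $f_s := h_s f$ satisfies $f_s \in D(\Delta)$ and $\Delta f_s = h_s \Delta f \in D(\Delta) \subset H^{1,2}(X)$, so by locality of the Laplacian $f_s \in D(\Delta, U)$ with $\Delta f_s \in H^{1,2}(U)$. Thus (1) applies to $(f_s, \phi)$, and the integration-by-parts identity above (applied with $f_s$ in place of $f$) converts the $\langle \nabla \Delta f_s, \nabla f_s \rangle$ term into the form appearing in (2). Letting $s \to 0$, Proposition \ref{prop:heatconv} gives $f_s \to f$ in $H^{1,2}(X)$, and strong continuity of the semigroup yields $\Delta f_s = h_s \Delta f \to \Delta f$ in $L^2(X)$; combined with the $L^\infty$ bounds on $\phi$, $|\nabla \phi|$, $|\Delta \phi|$ and their common compact support in $U$, every integral converges to its counterpart for $f$, giving (2).

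For (2)$\Rightarrow$(1), I use Theorem \ref{thm:cutoff} (which crucially relies on QL) to choose a good cut-off $\chi \in D(\Delta)$ with $\chi \equiv 1$ on an open neighborhood $V$ of $\supp \phi$, $\supp \chi$ compact in $U$, and $\chi, |\nabla \chi|, |\Delta \chi| \in L^\infty(X)$. Setting $\tilde f := \chi f$, I establish the Leibniz-type formula
\[
\tilde f \in D(\Delta), \qquad \Delta \tilde f = \chi \Delta f + 2\langle \nabla \chi, \nabla f \rangle + f \Delta \chi,
\]
by testing against $g \in H^{1,2}(X) \cap L^\infty(X)$ and invoking the duality definitions of $\chi \in D(\Delta)$ and $f \in D(\Delta, U)$ with the admissible test functions $fg$ and $\chi g$, both of which lie in $H^{1,2}_0(U)$ thanks to the compact support and $L^\infty$-bounds on $\chi$ and its derivatives. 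Then $\tilde f$ has compact support in $U$ and belongs to $D(\Delta)$, so (2) applies to $(\tilde f, \phi)$. By the localities (\ref{eq:localitygrad}) and (\ref{eq:localitylap}) we have $|\nabla \tilde f| = |\nabla f|$ and $\Delta \tilde f = \Delta f$ a.e.\ on $V \supset \supp \phi$; since every integrand in (2) for $\tilde f$ carries a factor of $\phi$, $\nabla \phi$, or $\Delta \phi$, each integral equals the corresponding integral with $f$ replacing $\tilde f$. A final application of the integration-by-parts identity in reverse converts the $-\phi (\Delta f)^2 - \Delta f \langle \nabla f, \nabla \phi \rangle$ terms back into $\phi \langle \nabla \Delta f, \nabla f \rangle$, yielding (1).

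The main technical obstacle is the Leibniz identity for $\Delta \tilde f$: unlike in the $\RCD$ setting, where it is folklore via test-function calculus, here it must be verified directly using only the IH structure, the duality defining $D(\Delta, U)$, and the $L^\infty$-regularity of $|\nabla \chi|$ and $\Delta \chi$ afforded by Theorem \ref{thm:cutoff}. The QL hypothesis enters exactly at this point, because without it we would not have a cut-off with bounded Laplacian. Once this identity and the admissibility of $\phi \Delta f$ and $\phi \Delta f_s$ as test functions in $H^{1,2}_0(U)$ are in place, the remaining steps---heat-flow convergence on one side, locality plus integration by parts on the other---are routine.
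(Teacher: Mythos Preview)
Your proposal is correct and follows essentially the same approach as the paper's proof: both directions hinge on the integration-by-parts identity you state, good cut-off functions from Theorem~\ref{thm:cutoff} to localize in (2)$\Rightarrow$(1), heat-flow regularization to gain $\Delta f_s \in H^{1,2}$ in (1)$\Rightarrow$(2), and the localities (\ref{eq:localitygrad}) and (\ref{eq:localitylap}) to transfer the inequality back to the original data. The only differences are cosmetic: the paper multiplies both $f$ and $\phi$ by the cut-off $\rho$ and writes $h_t(\rho f)$, whereas you exploit that $f$ (resp.\ $\phi$) already has compact support in $U$, so you cut off only $f$ in (2)$\Rightarrow$(1) and apply $h_s$ directly to $f$ in (1)$\Rightarrow$(2); you are also more explicit about the Leibniz identity for $\Delta(\chi f)$, which the paper simply asserts.
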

\begin{proof}
Firstly let us prove that if (2) holds, then (1) holds.
Thus assume (2). Fix $f \in D(\Delta, U)$ with $\Delta f \in H^{1,2}(U)$, and fix $\phi \in D(\Delta) \cap L^{\infty}(X)$ with $\phi \ge 0$, compact support in $U$, and $\Delta \phi \in L^{\infty}(X)$.
Find a good cut-off function $\rho \in D(\Delta)$ with $\rho=1$ on a neighborhood of $\supp \phi$ included in $U$, and $\supp \rho \subset U$, constructed in Theorem \ref{thm:cutoff}. Note that $\rho f, \rho \phi \in D(\Delta)$ with $\Delta (\rho \phi) \in L^{\infty}(X)$. 

Then
\begin{align}\label{al:locboch}
    &\frac{1}{2}\int_U\Delta (\rho\phi) |\nabla (\rho f)|^2 \di \meas_X \nonumber \\
&\ge \int_U\left(\rho \phi \frac{(\Delta (\rho f))^2}{N}-\rho\phi (\Delta (\rho f))^2-\Delta (\rho f)\langle \nabla (\rho f), \nabla (\rho\phi) \rangle +K\rho\phi |\nabla (\rho f))|^2\right) \di \meas_X.
\end{align}
The left-hand-side of (\ref{al:locboch}) is, by (\ref{eq:localitygrad}) and (\ref{eq:localitylap}), equal to
\begin{equation}
    \frac{1}{2}\int_U\Delta \phi |\nabla f|^2\di \meas_X.
\end{equation}
Similarly, the right-hand-side of  (\ref{al:locboch}) is equal to
\begin{align}
    &\int_U\left(\phi \frac{(\Delta f)^2}{N}-\phi (\Delta f)^2-\Delta f\langle \nabla f, \nabla \phi \rangle +K\phi |\nabla f|^2\right) \di \meas_X \nonumber \\
    &=\int_U\phi \left( \frac{(\Delta f)^2}{N}+\langle \nabla \Delta f, \nabla f\rangle +K|\nabla f|^2\right)\di \meas_X.
\end{align}
Thus we have (1).

Next let us assume (1). Take $\phi \in D(\Delta) \cap L^{\infty}(X)$ with $\phi \ge 0$, compact support in $U$ and $\Delta \phi \in L^{\infty}(X)$,  fix $f \in D(\Delta, U)$, and take $\rho$ as above again. Considering $h_t(\rho f)$, 
we know by (1)
\begin{align}
&\frac{1}{2}\int_U\Delta \phi |\nabla (h_t (\rho f))|^2 \di \meas_X \nonumber \\
&\ge \int_U\phi \left(\frac{(\Delta ( h_t(\rho f)))^2}{N}+\langle \nabla \Delta (h_t(\rho f)), \nabla (h_t (\rho f))\rangle +K|\nabla (h_t(\rho f))|^2\right)\di \meas_X \nonumber \\
&= \int_U\left(\phi \frac{(\Delta (h_t(\rho f)))^2}{N}-\phi (\Delta (h_t(\rho f)))^2-\Delta (h_t(\rho f))\langle \nabla (h_t(\rho f)), \nabla \phi \rangle +K\phi |\nabla (h_t(\rho f))|^2\right) \di \meas_X.
\end{align}
Thus letting $t \to 0$ completes the proof of (2) because of applying (\ref{eq:localitygrad}) and (\ref{eq:localitylap}) again.
\end{proof}
\begin{corollary}[Global-to-Local]\label{cor:gtol}
Assume that $X$ is a PI geodesic space with a QL.  Then
for all open subsets $U\subset V $ of $X$, if $V$ is a locally $\BE(K, N)$ space, then so is $U$.
\end{corollary}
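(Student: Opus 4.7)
The plan is to reduce everything to the equivalent characterization of locally $\BE(K,N)$ spaces given in Proposition \ref{prop:charbe}, after which monotonicity in the domain becomes essentially automatic.

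A naive attempt — take $f \in D(\Delta, U)$ with $\Delta f \in H^{1,2}(U)$ and try to view it as an element of $D(\Delta, V)$ so as to apply the hypothesis on $V$ — runs into the difficulty that the localized Laplacian domains of Definition \ref{def:localrcd} do not behave well under enlarging the open set: there is no canonical extension procedure $D(\Delta, U) \to D(\Delta, V)$. The key observation is that Proposition \ref{prop:charbe} reformulates the locally $\BE(K,N)$ condition so as to only involve globally defined $f \in D(\Delta)$ and $\phi \in D(\Delta) \cap L^{\infty}(X)$, both with compact support in the open set in question. For such globally defined test objects, having compact support is obviously monotone with respect to enlarging the open set.

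More precisely, suppose $V$ is locally $\BE(K,N)$. By Proposition \ref{prop:charbe}, the inequality \eqref{eq:weakboch} holds whenever $\phi \in D(\Delta)\cap L^{\infty}(X)$ is nonnegative with compact support in $V$ and $\Delta \phi \in L^{\infty}(X)$, and $f \in D(\Delta)$ has compact support in $V$. To establish that $U$ is locally $\BE(K,N)$, it suffices — again by Proposition \ref{prop:charbe} — to verify the same inequality for $\phi, f$ with compact support in $U$. Since $U \subset V$, compact support in $U$ automatically implies compact support in $V$, so the hypothesis on $V$ applies and delivers exactly the inequality required for $U$.

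I expect no step to be a genuine obstacle: the real work has been carried out in Proposition \ref{prop:charbe}, whose equivalence of formulations already used the existence of good cut-off functions from Theorem \ref{thm:cutoff} together with localities \eqref{eq:localitygrad} and \eqref{eq:localitylap}. The only point worth underlining in the writeup is that one must invoke the reformulation rather than apply Definition \ref{def:localrcd} directly, for the reason noted above; once this is done, the argument occupies essentially a single line.
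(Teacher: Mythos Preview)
Your proposal is correct and takes essentially the same approach as the paper: the paper's proof is the single line ``Consider (2) of Proposition \ref{prop:charbe},'' and your argument is precisely the unpacking of that line, observing that the compact-support condition in the reformulation is monotone under shrinking the open set.
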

\begin{proof}
    Consider (2) of Proposition \ref{prop:charbe}.
\end{proof}
This should be compared with a Global-to-Local result \cite[Proposition 7.7]{AmbrosioMondinoSavare2}, stating that if $X$ is an $\RCD(K, N)$ space, $U$ is an open subset of $X$, and $(U, \dist_X)$ is geodesic (thus it is convex in $X$) with $\meas_X(\partial U)=0$, then the closure $\bar U$ is an $\RCD(K, N)$ space with the restriction of $\meas_X$ to $\bar U$.

Using a partition of unity by good cut-off functions constructed in Theorem \ref{thm:cutoff}, together with Theorem \ref{thm:Poisson} and Proposition \ref{prop:charbe}, we can easily get the following.
\begin{corollary}\label{prop:LtoG}
Assume that $X$ is a PI geodesic space with a QL.
    Let $\{U_i\}_i$ be a family of open subsets of locally $\BE(K, N)$ spaces in $X$. 
    Then
    \begin{equation}
        U:=\bigcup_iU_i
    \end{equation}
    is also a locally $\BE(K, N)$ space.
\end{corollary}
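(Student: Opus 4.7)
The plan is to verify the equivalent weak formulation (2) of Proposition \ref{prop:charbe} for the open set $U = \bigcup_i U_i$ by a standard localization-and-sum argument based on good cut-off functions. So I would fix $\phi \in D(\Delta) \cap L^{\infty}(X)$ with $\phi \ge 0$, $\Delta \phi \in L^{\infty}(X)$ and compact support in $U$, together with $f \in D(\Delta)$ of compact support in $U$, and set $K := \supp \phi \cup \supp f$. Since $K$ is compact in $U$, I can extract a finite subcover $U_{i_1}, \ldots, U_{i_m}$ of $K$.

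Next I would construct a partition of unity $\{\chi_j\}_{j=1}^m$ subordinate to $\{U_{i_j}\}$ with members in $D(\Delta)$. Applying Theorem \ref{thm:cutoff} to each $U_{i_j}$, I obtain $\psi_j \in D(\Delta)$ with $\psi_j \equiv 1$ on a smaller open set whose union covers $K$, with $\supp \psi_j$ compact in $U_{i_j}$, and with $|\nabla \psi_j|, |\Delta \psi_j| \in L^{\infty}(X)$. Then the classical telescoping $\chi_j := \psi_j \prod_{k<j}(1 - \psi_k)$ gives $\sum_{j=1}^m \chi_j = 1 - \prod_{k=1}^m (1-\psi_k)$, which equals $1$ on an open neighborhood of $K$; repeated use of Leibniz inherits the uniform bounds on $|\nabla \chi_j|$ and $|\Delta \chi_j|$. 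Setting $\phi_j := \chi_j \phi$, I have $\phi_j \ge 0$ with $\supp \phi_j$ compact in $U_{i_j}$, and the identity $\Delta \phi_j = \chi_j \Delta \phi + 2 \langle \nabla \chi_j, \nabla \phi \rangle + \phi \Delta \chi_j$ places $\Delta \phi_j$ in $L^{\infty}(X)$, once I note that $|\nabla \phi| \in L^{\infty}(X)$ by Theorem \ref{thm:Poisson} applied to $\phi$ (whose Poisson datum $\Delta \phi$ is bounded with compact support).

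To apply Proposition \ref{prop:charbe}(2) inside $U_{i_j}$ I also need a second test function of compact support in $U_{i_j}$. I would pick another good cut-off $\eta_j \in D(\Delta)$ via Theorem \ref{thm:cutoff}, equal to $1$ on an open neighborhood of $\supp \phi_j$ and with $\supp \eta_j$ compact in $U_{i_j}$, and set $f_j := \eta_j f$. A Leibniz computation shows $f_j \in D(\Delta)$ with compact support in $U_{i_j}$, while the locality properties \eqref{eq:localitygrad} and \eqref{eq:localitylap} ensure $|\nabla f_j| = |\nabla f|$ and $\Delta f_j = \Delta f$ almost everywhere on the interior of $\{\eta_j = 1\}$, which contains $\supp \phi_j \supset \supp \Delta \phi_j$. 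Hence applying Proposition \ref{prop:charbe}(2) to the pair $(\phi_j, f_j)$ in the locally $\BE(K,N)$ space $U_{i_j}$ yields the weak Bochner inequality of (2) for that pair, and locality lets me replace $f_j$ by $f$ in every integrand.

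Finally, summing these $m$ inequalities and using that $\sum_j \chi_j \equiv 1$ on a neighborhood of $\supp \phi$ while $\phi$ vanishes elsewhere, one has $\sum_j \phi_j = \phi$, $\sum_j \nabla \phi_j = \nabla \phi$, and $\sum_j \Delta \phi_j = \Delta \phi$ as identities on $X$, so linearity collapses the sum into the weak Bochner inequality of Proposition \ref{prop:charbe}(2) for $(\phi, f)$ in $U$. This verifies that $U$ is locally $\BE(K, N)$. The only substantive inputs are already supplied by the hypotheses, namely Theorem \ref{thm:cutoff} for good cut-offs with bounded Laplacian and Theorem \ref{thm:Poisson} for the bound on $|\nabla \phi|$; the main thing to watch is the bookkeeping with supports, Leibniz, and locality to be sure the replacement of $f_j$ by $f$ under each integrand is legitimate.
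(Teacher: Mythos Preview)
Your proposal is correct and follows essentially the same approach the paper indicates: a partition of unity by good cut-off functions from Theorem \ref{thm:cutoff}, combined with Theorem \ref{thm:Poisson} to ensure $|\nabla \phi|\in L^\infty$, and the equivalent formulation (2) of Proposition \ref{prop:charbe}. The paper only sketches this in one sentence, and your detailed bookkeeping with supports, Leibniz, and the localities \eqref{eq:localitygrad}, \eqref{eq:localitylap} fills in exactly what is needed; your telescoping construction of the $\chi_j$ is a minor stylistic variant of the quotient partition $\psi_j/\sum_k\psi_k$ used elsewhere in the paper (e.g.\ in the proof of Proposition \ref{prop:belo}), but both work equally well here.
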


As a reverse direction of Corollary \ref{cor:gtol}, the following is non-trivial.
\begin{theorem}[Local-to-Global]\label{thm:LtoG}
Assume that $X$ is a PI geodesic space with a QL.
If $X$ is a locally $\BE(K, N)$ space, then $X$ is a $\BE(K, N)$ space. In particular, if we furthermore assume that $X$ verifies the SL, then $X$ is an $\RCD(K, N)$ space. 
\end{theorem}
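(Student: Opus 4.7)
The plan is to verify the three defining conditions of $\RCD(K,N)$ separately. Two are free: the Sobolev-to-Lipschitz property comes from Proposition \ref{prop:PIStoL}, and the volume growth bound (\ref{eq:volumegrowth}) follows from the exponential estimate $\meas_X(B_r(x_0))\le Ce^{Cr}\meas_X(B_1(x_0))$ of Proposition \ref{prop:poincaresobolev}(2), since after adjusting constants $Ce^{Cr}$ is dominated by $C'e^{Cr^2}$ for every $r\ge 0$. Infinitesimal Hilbertianity is already built into the locally $\BE$ hypothesis. So the work lies entirely in promoting the locally $\BE(K,N)$ condition into its global counterpart: fix $f\in D(\Delta)$ with $\Delta f\in H^{1,2}(X)$ and $\phi\in D(\Delta)\cap L^\infty(X)$ with $\phi\ge 0$ and $\Delta\phi\in L^\infty(X)$, and aim to establish the Bochner inequality for this pair integrated over all of $X$.

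The first observation I would record is that, under PI with QL, the hypothesis $\Delta\phi\in L^\infty$ automatically forces $|\nabla\phi|\in L^\infty(X)$. Indeed, applying Theorem \ref{thm:Poisson} with $r=R=1$ on an arbitrary unit ball $B$, the geometric sum $\sum_{j\le -1}2^j=1$ gives
\begin{equation*}
|\nabla\phi|(y)\le C\left(\intav_B|\phi|\,\di\meas_X+\|\Delta\phi\|_\infty\right)\le C\left(\|\phi\|_\infty+\|\Delta\phi\|_\infty\right)
\end{equation*}
for a.e.\ $y\in\tfrac12 B$, with $C$ depending only on $c_v(2), c_p(2), c_h(2)$. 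Covering $X$ by half-balls of unit balls produces the uniform bound $\|\nabla\phi\|_\infty<\infty$.

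Next, using the Laplacian cut-off $\eta_R$ from Theorem \ref{thm:cutoff} (with $\eta_R|_{RB}\equiv 1$, $\supp\eta_R\subset(R+2)B$, $|\nabla\eta_R|+|\Delta\eta_R|\le C$), I set $\phi_R:=\eta_R\phi$. Then $\phi_R$ is nonnegative, compactly supported, bounded, and the Leibniz formula $\Delta\phi_R=\eta_R\Delta\phi+\phi\Delta\eta_R+2\langle\nabla\eta_R,\nabla\phi\rangle$ (justified by integration by parts against $\mathrm{Lip}_c$ functions) shows $\Delta\phi_R\in L^\infty(X)$ precisely because of the gradient bound on $\phi$. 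Hence $\phi_R$ is an admissible test function in Definition \ref{def:localrcd}, and the locally $\BE(K,N)$ hypothesis with $U=X$ applied to $(\phi_R,f)$ yields the integrated Bochner inequality with $\phi_R$ in place of $\phi$. Letting $R\to\infty$, the two correction terms $\tfrac12\int\phi\,\Delta\eta_R|\nabla f|^2\,\di\meas_X$ and $\int\langle\nabla\eta_R,\nabla\phi\rangle|\nabla f|^2\,\di\meas_X$ are supported in $X\setminus RB$ (since $\eta_R\equiv 1$ on $RB$) and dominated by $C(\|\phi\|_\infty+\|\nabla\phi\|_\infty)\int_{X\setminus RB}|\nabla f|^2\,\di\meas_X$, which vanishes as $|\nabla f|^2\in L^1(X)$. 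All other terms converge by dominated convergence, using $\eta_R\to 1$ pointwise with $L^1$-dominants built from the $L^\infty$-bounds on $\phi, \Delta\phi, \nabla\phi$ and the $L^1$-integrability of $|\nabla f|^2$, $(\Delta f)^2$, $|\nabla\Delta f||\nabla f|$ (the last by Cauchy-Schwarz, using $\Delta f\in H^{1,2}$). This produces the global Bochner inequality and completes the verification of $\BE(K,N)$.

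The only delicate point, and the main obstacle I would have expected, is the cross term $\langle\nabla\eta_R,\nabla\phi\rangle$ in the Leibniz expansion: the bare hypothesis $\Delta\phi\in L^\infty$ does not obviously control $|\nabla\phi|$, so without further input the cut-off would not produce a function in $D(\Delta)$ with $L^\infty$-Laplacian and the $R\to\infty$ limit would be out of reach. The Lipschitz regularity for solutions of Poisson equations supplied by QL through Theorem \ref{thm:Poisson} is exactly the ingredient that converts this potential obstruction into a uniform estimate on $|\nabla\phi|$ and lets the cut-off argument close cleanly at infinity.
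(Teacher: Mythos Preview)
Your proof is correct and follows essentially the same approach as the paper's: multiply the global test function $\phi$ by the good cut-off $\eta_R$ from Theorem \ref{thm:cutoff}, use Theorem \ref{thm:Poisson} to ensure $|\nabla\phi|\in L^\infty$ so that $\Delta(\eta_R\phi)\in L^\infty$, apply the local Bochner inequality, and pass to the limit by dominated convergence; the remaining $\RCD$ conditions come from (\ref{eq:expgrowth}) and Proposition \ref{prop:PIStoL}. Your write-up is in fact more explicit than the paper's about the Leibniz expansion and the handling of the correction terms supported outside $RB$.
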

\begin{proof}
    Fix an open ball $B$ of radius $1$. Applying Theorem \ref{thm:cutoff} (see also Proposition \ref{rem:lapcut}), find a sequence of cut-off functions $\rho_i \in D(\Delta)$ with $\rho_i|_{iB}=1$, $\supp \rho_i \subset (i+2)B$ and
    \begin{equation}\label{eq:cutoffbound}
        \sup_i\left( \|\nabla \rho_i\|_{L^{\infty}} + \|\Delta \rho_i \|_{L^{\infty}}\right)<\infty.
    \end{equation}
    Then for any $f \in D(\Delta)$ with $\Delta f \in H^{1,2}(X)$, and any $\phi \in D(\Delta) \cap L^{\infty}(X)$ with $\phi \ge 0$ and $\Delta \phi \in L^{\infty}(X)$, since $\phi \rho_i \in D(\Delta)$ with $\Delta (\phi \rho_i) \in L^{\infty}(X)$ because of applying Theorem \ref{thm:Poisson} to $\phi$, we have by definition
    \begin{equation}
        \frac{1}{2}\int_X\Delta (\phi\rho_i) |\nabla f|^2 \di \meas_X \ge \int_X\phi\rho_i \left( \frac{(\Delta f)^2}{N}+\langle \nabla \Delta f, \nabla f\rangle +K|\nabla f|^2\right)\di \meas_X. 
    \end{equation}
    Thus taking $i \to \infty$ in the above together with 
    (\ref{eq:cutoffbound}) and the dominated convergence theorem proves
    \begin{equation}
        \frac{1}{2}\int_X\Delta \phi |\nabla f|^2 \di \meas_X \ge \int_X\phi \left( \frac{(\Delta f)^2}{N}+\langle \nabla \Delta f, \nabla f\rangle +K|\nabla f|^2\right)\di \meas_X
    \end{equation}
    which shows that $X$ is a $\BE(K, N)$ space. Therefore the desired conclusion comes from this together with 
    (\ref{eq:expgrowth}).
    
\end{proof}
\begin{remark}\label{rem:covering}
    Let $X$ be an $\RCD(K, N)$ space for some $K \in \mathbb{R}$ and some $N \in [1, \infty)$, let $\pi: \tilde X \to X$ be a connected covering space and consider the lifted length distance $\dist_{\tilde X}$ and the lifted Borel measure $\meas_{\tilde X}$ on $\tilde X$ (see subsection 2.2 of \cite{MW} for the details). Though it follows from \cite[Theorem 1.1]{MW} after \cite{SW1, SW2} (see also \cite[Corollary 1.1]{W} and \cite[Theorem 18]{Z}) that $\tilde X$ is also an $\RCD(K, N)$ space, let us provide an alternative proof of it, based on our framework, in the case when $X$ is compact as follows.

    Firstly notice that for any $x \in X$ there exists $r_x>0$ such that for any $\tilde x \in \pi^{-1}(x)$, $\pi$ gives an isometry from $B_{r_x}(\tilde x)$ to $B_{r_x}(x)$ as metric measure spaces. Recalling the compactness of $X$, we can assume that there exists $r>0$ such that $r<r_x$ holds for any $x \in X$. Taking a finite covering $\{B_{\frac{r}{4}}(x_i)\}_i$ of $X$ and considering their lifts, we can conclude that $\tilde X$ is local volume doubling with a Poincar\'e inequality and a QL, at most scale $\frac{r}{8}$. 

On the other hand, it is easy to see that $\tilde X$ is complete and locally compact. Thus we know by the Hopf-Rinow theorem for metric spaces (see for instance \cite[Theorem 2.5.28]{BBI}) that $\tilde X$ is proper because it is a length space. In particular $\tilde X$ is a separable geodesic space. 

For any $f \in H^{1,2}(\tilde X)$ with $|\nabla f| \le 1$, the Sobolev-to-Lipschitz property on $X$ together with the observation above yields that $f$ has a continuous representative, in fact,
\begin{equation}
    \bar f(x):= \lim_{s\to 0}\intav_{B_s(x)}f\di \meas_{\tilde X}
\end{equation}
gives the desired representative. Then the same arguments as in the proof of Lemma \ref{prop:PIStoL} proves that $\bar f$ $1$-Lipschitz, namely we have the Sobolev-to-Lipschitz property on $\tilde X$.

    The above observations together with Remarks \ref{rem:localPI}, \ref{rem:qlch} and (the proof of) Theorem \ref{thm:LtoG} allow us to conclude that $\tilde X$ is an $\RCD(K, N)$ space.
\end{remark}
\subsection{Locally RCD space}
Finally, as in the locally BE case, let us localize the RCD theory as follows.
\begin{definition}[Locally RCD space]\label{def:lrcd}
We say that an open subset $U$ of a metric measure space $X$ is a \textit{locally $\RCD(K, N)$ space} for some $K \in \mathbb{R}$ and some $N \in [1, \infty]$ if for any $x \in U$ there exist an open subset $V$ of an $\RCD(K, N)$ space $Y$ and an open neighborhood $W$ of $x$ such that $W$ is isometric to $V$ as metric measure spaces. Moreover $X$ is said to be \textit{non-collapsed} if the above $Y$ can be chosen as a non-collapsed $\RCD(K, N)$ space.
\end{definition}
The following is straightforward by definition.
\begin{proposition}\label{prop:belo}
    A locally $\RCD(K, N)$ open subset for some $K \in \mathbb{R}$ and some $N \in [1, \infty)$ of an IH metric measure space $X$ is a locally $\BE(K, N)$ space.
\end{proposition}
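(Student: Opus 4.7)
The plan is to derive the local Bochner inequality on $U$ by reducing it to the $\RCD$ model through a partition of unity. Fix $f \in D(\Delta, U)$ with $\Delta f \in H^{1,2}(U)$ and a test function $\phi \in D(\Delta) \cap L^\infty(X)$ with $\phi \ge 0$, $\Delta\phi \in L^\infty(X)$, and compact support in $U$. The local $\RCD$ hypothesis supplies, for each $x \in \supp\phi$, a neighborhood $W_x\subset U$ isometric as a metric measure space to an open subset $V_x \subset Y_x$ of some $\RCD(K,N)$ space $Y_x$; by compactness of $\supp\phi$, I would extract a finite subcover $\{W_i \cong V_i \subset Y_i\}_{i=1}^k$.

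Next I would build a subordinate partition of unity $\{\eta_i\}$ where each $\eta_i$ is a good cut-off in the sense of Theorem \ref{thm:cutoff}. Such cut-offs exist on each $V_i \subset Y_i$ since $Y_i$ is $\RCD(K,N)$, hence a PI geodesic space with a QL by Theorems \ref{thm:lpircd} and \ref{thm:qLCh}; transferring through the isometries produces $\eta_i$ on $W_i$ with $|\nabla \eta_i|, |\Delta \eta_i| \in L^\infty(X)$, $\supp\eta_i \subset W_i$, and $\sum_i \eta_i \equiv 1$ on a neighborhood of $\supp\phi$. Each $\phi_i := \phi\eta_i$ is then non-negative, lies in $D(\Delta) \cap L^\infty(X)$ with compact support in $W_i$, and has $\Delta\phi_i \in L^\infty(X)$ by the product rule together with locality (\ref{eq:localitylap}); moreover $\phi = \sum_i \phi_i$.

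Summing over $i$ reduces the Bochner inequality for $(\phi, f)$ to its analogue for each pair $(\phi_i, f)$. Both sides of the $\phi_i$-inequality are supported in $W_i$, so by (\ref{eq:localitygrad})--(\ref{eq:localitylap}) and the isometric identification $W_i \cong V_i$, this inequality is equivalent to the corresponding one inside $Y_i$. Each $Y_i$ is $\BE(K,N)$ (by definition of $\RCD$), and is PI with a QL, so Corollary \ref{cor:gtol} yields that $V_i$ is locally $\BE(K,N)$ inside $Y_i$, which supplies exactly the inequality we need after transferring the data back through the isometry.

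The only non-routine ingredient is a careful check that $\phi_i$ and $f$, once restricted and transferred through $W_i \cong V_i$, land in the correct domains on $Y_i$: that $\phi_i$ extended by zero belongs to $D(\Delta_{Y_i})\cap L^\infty(Y_i)$ with Laplacian in $L^\infty(Y_i)$, which follows because $\Delta\phi_i$ vanishes outside $\supp\phi_i$ by locality of the Laplacian, and that $f \in D(\Delta, V_i)$ with $\Delta f \in H^{1,2}(V_i)$, which is immediate from the intrinsic nature of the local Sobolev spaces on open subsets. I expect this bookkeeping to be the main (yet minor) obstacle; once it is dispatched, the proof is indeed \emph{straightforward by definition}, as the text asserts.
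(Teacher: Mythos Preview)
Your proposal is correct and follows essentially the same approach as the paper: cover $\supp\phi$ by finitely many charts isometric to open pieces of $\RCD(K,N)$ spaces, build a partition of unity out of good cut-offs manufactured on the $\RCD$ side (via Theorems \ref{thm:lpircd}, \ref{thm:qLCh}, \ref{thm:cutoff}), and sum the Bochner inequalities obtained on each piece. The paper writes the partition as $\phi_i=\psi_i/\sum_j\psi_j$ and applies the Bochner inequality directly on each chart without explicitly invoking Corollary \ref{cor:gtol}, but this is only a cosmetic difference from your route through that corollary; the bookkeeping you flag (that the transferred $\phi_i$ and $f$ land in the right domains on $Y_i$) is indeed the only point requiring care, and it is handled implicitly in the paper as well.
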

\begin{proof}
The proof is very similar to that of Theorem \ref{thm:LtoG}.

Let us take an open subset $U$ of $X$ which is a locally $\RCD(K, N)$ space in $X$. Fix  $f \in D(\Delta, U)$ with $\Delta f \in H^{1,2}(U)$, and take $\phi \in D(\Delta)\cap L^{\infty}(X)$ with compact support in $U$, $\phi \ge 0$ and $\Delta \phi \in L^{\infty}(X)$ (note $|\nabla \phi| \in L^{\infty}(X)$). Find an open covering $\{B_{r_i}(x_i)\}_{i=1}^k$ of $\supp \phi$
satisfying that each $B_{3r_i}(x_i)$ is included in $U$ and it is isometric to a ball of radius $3r_i$ in an $\RCD(K, N)$ space as metric measure spaces. Take a good cut-off function $\psi_i \in D(\Delta)$ with $\psi_i=1$ on $B_{r_i}(x_i)$, $\supp \psi_i \subset B_{2r_i}(x_i)$, and $|\nabla \psi_i| +|\Delta \psi_i| \in L^{\infty}(X)$. Let 
\begin{equation}
    \phi_i:=\frac{\psi_i}{\sum_j\psi_j}.
\end{equation}
Since for each $i$
\begin{equation}
    \frac{1}{2}\int_X\Delta (\phi \phi_i)|\nabla f|^2 \di \meas_X \ge \int_X\phi \phi_i \left(\frac{(\Delta f)^2}{N}+\langle \nabla \Delta f, \nabla f\rangle +K|\nabla f|^2\right)\di \meas_X,
\end{equation}
taking the sum with respect to $i$ proves
\begin{equation}
    \frac{1}{2}\int_X\Delta \phi|\nabla f|^2 \di \meas_X \ge \int_X\phi \left(\frac{(\Delta f)^2}{N}+\langle \nabla \Delta f, \nabla f\rangle +K|\nabla f|^2\right)\di \meas_X,
\end{equation}
which shows that $U$ is a locally $\BE(K, N)$ space.
\end{proof}
Note that the converse implication is not true even in the compact case by considering the glued space of two spheres at a common point, as observed in \cite[Example 3.8]{Honda3} (see also Remark \ref{rem:example}). 

This local notion will play a role in Section \ref{sec:almostrcd}. It is important that by the RCD theory, we can use the Hessians of certain functions on an locally $\RCD$ open subset, as explained in Theorem \ref{thm:hessboch} and around the end of subsection \ref{subsec:localobj}. We do not know whether it is also possible for locally BE open subsets. See also Question \ref{ques:1}.

\section{Almost smooth metric measure space}\label{sec:as}
In this section we discuss our main targets, called \textit{almost smooth} metric measure spaces. 
\subsection{Smooth objects}
Let us recall fundamental smooth objects we will discuss later.
\begin{definition}\label{def:weighted}
Let $M^n=(M^n, g)$ be a (not necessarily complete) Riemannian manifold of dimension $n$ without boundary, and let $w \in C^{\infty}(M^n)$. 
\begin{enumerate}
\item{(Weighted volume)} For any Borel subset $A$ of $M^n$, let us define the \textit{weighted (Riemannian) volume measure} of $A$ associated with $w$ by
\begin{equation}
\mathrm{vol}^g_w(A):=\int_Ae^{-w}\di \mathrm{vol}^g,
\end{equation}
where $\mathrm{vol}^g$ denotes the Riemannian volume measure associated with $g$ (thus this coincides with the Hausdorff measure $\haus^n$ of dimension $n$).
\item{(Weighted Laplacian)} For any $\phi \in C^{\infty}(M^n)$, let us define the \textit{weighted Laplacian} associated with $w$ by
\begin{equation}
\Delta_w^g\phi:=\Delta^g\phi-g(\nabla^g \phi, \nabla^g w),
\end{equation}
where $\Delta^g\phi$ is the trace of the Hessian, $\mathrm{Hess}_{\phi}^g$, of $\phi$ and $\nabla^g$ is the gradient associated with $g$.
\item{(Bakry-\'Emery Ricci tensor)} For any $n \le N \le \infty$, the \textit{$(N, w)$-Bakry-\'Emery Ricci tensor}, or the \textit{$N$-Bakry-\'Emery Ricci tensor} for short, denoted by $\mathrm{Ric}_{N, w}^g$, is defined by
\begin{equation}\label{eq:bericci}
\mathrm{Ric}_{N, w}^g:=\mathrm{Ric}^g+\mathrm{Hess}_w^g-\frac{dw\otimes dw}{N-n},
\end{equation}
where $\mathrm{Ric}^g$ is the Ricci tensor of $(M^n, g)$. Note that $\mathrm{Ric}_{N, w}^g:=\mathrm{Ric}^g$ and $w$ is always assumed to be a (locally) constant whenever we discuss the case when $N=n$. 
\end{enumerate}
\end{definition}
Recall that in the same setting as above, we have;
\begin{itemize}
    \item for all $\phi, \psi \in C^{\infty}(M^n)$,
\begin{equation}
    \int_{M^n}g(\nabla^g \phi, \nabla^g \psi) \di \mathrm{vol}^g_w=-\int_{M^n}\Delta^g_w\phi \cdot \psi \di \mathrm{vol}^g_w
\end{equation}
if at least one of $\phi$ and $\psi$ has compact support;
\item if $(M^n ,g)$ is complete, then $C_c^{\infty}(M^n)$ is dense in $H^{1,p}(M^n)$ for any $p \in [1, \infty)$. In particular, if $\phi \in C^{\infty}(M^n)$ satisfies $|\phi|+|\nabla^g \phi| +|\Delta^g_w\phi| \in L^2(M^n)$, then $\phi \in D(\Delta)$ in the sense of Definition \ref{def:lap} with $\Delta \phi=\Delta^g_w\phi$;
\item if $\mathrm{Ric}_{N, w}^g \ge K$, then for any $\phi \in C^{\infty}(M^n)$ we have
\begin{equation}\label{prop:besmooth}
\frac{1}{2}\Delta_w^g|\nabla^g \phi|^2 \ge \max \left\{ |\mathrm{Hess}_{\phi}^g|^2, \frac{(\Delta_w^g\phi)^2}{N}\right\} +g(\nabla^g \Delta_w^g\phi, \nabla^g \phi) +K|\nabla^g \phi|^2,
\end{equation}
see also Theorem \ref{thm:hessboch}, \cite[Proposition 4.21]{ErbarKuwadaSturm} and the remark below.
\end{itemize}
\begin{remark}\label{rem:converse}
    The pointwise Bochner inequality (\ref{prop:besmooth}) is a direct consequence of the \textit{Bochner identity}:
    \begin{equation}\label{eq:identi}
        \frac{1}{2}\Delta_w^g|\nabla^g \phi|^2 =|\mathrm{Hess}_{\phi}^g|^2+g(\nabla^g \Delta_w^g\phi, \nabla^g \phi)+\frac{(\Delta^g_w\phi-\Delta^g\phi)^2}{N-n}+\mathrm{Ric}_{N, w}^g(\nabla^g\phi, \nabla^g\phi)
    \end{equation}
    with elemental inequalities $|\mathrm{Hess}_{\phi}^g|^2 \ge \frac{(\Delta^g\phi)^2}{n}$ and 
    \begin{equation}
        \frac{\alpha^2}{a}+\frac{(\beta-\alpha)^2}{b} \ge \frac{\beta^2}{a+b},\quad \text{for all $a>0, b>0, \alpha \in \mathbb{R}$ and $\beta \in \mathbb{R}$.}
    \end{equation}
    Conversely, (\ref{prop:besmooth}), more weakly, the inequality
    \begin{equation}\label{prop:besmooth5}
\frac{1}{2}\Delta_w^g|\nabla^g \phi|^2 \ge  \frac{(\Delta_w^g\phi)^2}{N} +g(\nabla^g \Delta_w^g\phi, \nabla^g \phi) +K|\nabla^g \phi|^2,
\end{equation}
    implies $\mathrm{Ric}_{N, w}^g \ge K$. Though this is an well-known fact, let us give a proof for reader's convenience as follows.

    Fix $x \in M^n$. Thanks to the smooth convergence of the rescaled weighted spaces to the tangent cone at $x$, $\mathbb{R}^n$, we can find a harmonic coordinate $\Phi=(\phi_1, \ldots, \phi_n)$ around $x$ (thus $\Delta^g_w\phi_i=0$) with the vanishing Hessian at $x$ (thus, in particular, $\Delta^g\phi_i(x)=0$). For all $a_i \in \mathbb{R}$, letting $\psi:=\sum_ia_i\phi_i$, since (\ref{eq:identi}) yields
    \begin{equation}
        \frac{1}{2}\Delta^g_w|\nabla^g \psi|^2(x)=\mathrm{Ric}_{N, w}^g(\nabla^g\psi, \nabla^g\psi)(x),
    \end{equation}
    this together with (\ref{prop:besmooth5}) shows
    \begin{equation}
        \mathrm{Ric}_{N, w}^g(\nabla^g\psi, \nabla^g\psi)(x) \ge K|\nabla^g \psi|(x).
    \end{equation}
    This implies $\mathrm{Ric}_{N, w}^g \ge K$ because $a_i$ are arbitrary.
\end{remark}

By this observation, from the next subsection, we will use the notations, $g=\langle \cdot, \cdot \rangle, \nabla^g=\nabla, \Delta^g_w=\Delta$ and so on, coming from the framework of metric measure spaces, explained in the previous section.
\begin{lemma}\label{lem:S}
    Let $(M^n, g)$ be a (possibly incomplete) Riemannian manifold and let $w, h \in C^{\infty}(M^n)$. Then for any Lipschitz function $\phi$ with compact support, we have
    \begin{equation}
        \frac{1}{2}\int_{M^n}\phi^2|\mathrm{Hess}_h^g|^2\di \mathrm{vol}^g_w\le \int_{M^n}\left(2|\nabla^g h|^2|\nabla^g \phi|^2-\phi^2g(\nabla^g \Delta_w^g h, \nabla^g h)-K\phi^2|\nabla^g h|^2\right) \di \mathrm{vol}^g_w.
    \end{equation}
\end{lemma}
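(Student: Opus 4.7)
The plan is to integrate the pointwise weighted Bochner inequality (\ref{prop:besmooth}) against the test weight $\phi^{2}$, integrate by parts on the Laplacian term, and then absorb the cross term by a Cauchy--Schwarz/Young inequality. Note that, although not explicitly spelled out in the statement, the appearance of $K$ on the right-hand side forces us to assume $\mathrm{Ric}_{N,w}^{g}\ge K$ (equivalently, the weakest form $\mathrm{Ric}_{\infty,w}^{g}\ge K$ suffices here, since the $N$-term is discarded).

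First I would write the Bochner inequality pointwise,
\begin{equation*}
\frac{1}{2}\Delta_{w}^{g}|\nabla h|^{2}\ge |\mathrm{Hess}_{h}^{g}|^{2}+\langle\nabla\Delta_{w}^{g}h,\nabla h\rangle+K|\nabla h|^{2},
\end{equation*}
multiply by $\phi^{2}$, and integrate against $\mathrm{vol}_{w}^{g}$. Because $\phi$ is Lipschitz with compact support contained in $M^{n}$ (so $\phi^{2}$ lies in $H^{1,2}_{c}(M^{n})$ with $\nabla\phi^{2}=2\phi\nabla\phi$ a.e.), the weighted divergence theorem applied on a precompact smooth neighborhood of $\mathrm{supp}\,\phi$ gives
\begin{equation*}
\frac{1}{2}\int_{M^{n}}\phi^{2}\Delta_{w}^{g}|\nabla h|^{2}\di\mathrm{vol}_{w}^{g}
=-\int_{M^{n}}\phi\,\langle\nabla\phi,\nabla|\nabla h|^{2}\rangle\di\mathrm{vol}_{w}^{g}
=-2\int_{M^{n}}\phi\,\mathrm{Hess}_{h}^{g}(\nabla h,\nabla\phi)\di\mathrm{vol}_{w}^{g},
\end{equation*}
where I used the identity $\tfrac{1}{2}\nabla|\nabla h|^{2}=\mathrm{Hess}_{h}^{g}(\nabla h,\,\cdot\,)^{\sharp}$. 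Combining with the integrated Bochner inequality,
\begin{equation*}
-2\int_{M^{n}}\phi\,\mathrm{Hess}_{h}^{g}(\nabla h,\nabla\phi)\di\mathrm{vol}_{w}^{g}
\ge \int_{M^{n}}\phi^{2}|\mathrm{Hess}_{h}^{g}|^{2}\di\mathrm{vol}_{w}^{g}+\int_{M^{n}}\phi^{2}\langle\nabla\Delta_{w}^{g}h,\nabla h\rangle\di\mathrm{vol}_{w}^{g}+K\int_{M^{n}}\phi^{2}|\nabla h|^{2}\di\mathrm{vol}_{w}^{g}.
\end{equation*}

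To close the argument, I would estimate the cross term by Cauchy--Schwarz (pointwise, $|\mathrm{Hess}_{h}^{g}(\nabla h,\nabla\phi)|\le|\mathrm{Hess}_{h}^{g}||\nabla h||\nabla\phi|$) followed by Young's inequality with the weighting $2ab\le\tfrac{1}{2}a^{2}+2b^{2}$ applied to $a=\phi|\mathrm{Hess}_{h}^{g}|$ and $b=|\nabla h||\nabla\phi|$:
\begin{equation*}
\left|2\int_{M^{n}}\phi\,\mathrm{Hess}_{h}^{g}(\nabla h,\nabla\phi)\di\mathrm{vol}_{w}^{g}\right|\le \frac{1}{2}\int_{M^{n}}\phi^{2}|\mathrm{Hess}_{h}^{g}|^{2}\di\mathrm{vol}_{w}^{g}+2\int_{M^{n}}|\nabla h|^{2}|\nabla\phi|^{2}\di\mathrm{vol}_{w}^{g}.
\end{equation*}
Substituting this bound and moving the $\tfrac{1}{2}\int\phi^{2}|\mathrm{Hess}_{h}^{g}|^{2}$ to the left-hand side yields exactly the stated inequality.

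There is no genuine obstacle here; the only mildly delicate point is the integration by parts with a merely Lipschitz test function $\phi$, which is standard (one may either approximate $\phi$ by smooth compactly supported functions via mollification and pass to the limit, or invoke the weak divergence theorem on $H^{1,2}$ functions with compact support inside the smooth manifold $M^{n}$). Everything else is a pure calculus manipulation of the pointwise Bochner inequality already recorded in (\ref{prop:besmooth}).
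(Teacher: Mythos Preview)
Your proof is correct and follows essentially the same approach as the paper: integrate the pointwise Bochner inequality against $\phi^{2}$, integrate by parts, and absorb the cross term via Cauchy--Schwarz/Young. The paper writes the cross term as $-4\int|\nabla h|\phi\langle\nabla|\nabla h|,\nabla\phi\rangle$ and uses Kato's inequality $|\nabla|\nabla h||\le|\mathrm{Hess}_{h}|$, whereas you use $|\mathrm{Hess}_{h}(\nabla h,\nabla\phi)|\le|\mathrm{Hess}_{h}||\nabla h||\nabla\phi|$ directly --- these are equivalent formulations of the same estimate, and your handling of the Lipschitz case (approximation by smooth functions) matches the paper's as well.
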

\begin{proof}
This is done when $\phi$ is smooth, due to the discussions around \cite[(6)]{S} together with (\ref{prop:besmooth}). For reader's convenience, let us recall it as follows.

Firstly it follows from (\ref{prop:besmooth}) that
\begin{equation}\label{eq:bochner2}
    \frac{1}{2}\int_{M^n}|\nabla^g h|^2\Delta^g_w \phi^2 \di \mathrm{vol}_w^g \ge \int_{M^n} \phi^2\left(|\mathrm{Hess}_h^g|^2+g(\nabla^g \Delta^g_wh, \nabla^g h)+K|\nabla^g h|^2\right)\di \mathrm{vol}_w^g.
\end{equation}
On the other hand, 
\begin{align}
    \int_{M^n}|\nabla^g h|^2\Delta^g_w \phi^2 \di \mathrm{vol}_w^g&=-4\int_{M^n}|\nabla^g h|\phi g(\nabla^g |\nabla^g h|, \nabla^g \phi)\di \mathrm{vol}_w^g \nonumber \\
    &\le \int_{M^n}\left(\phi^2 |\mathrm{Hess}_h^g|^2+|\nabla^g h|^2|\nabla^g \phi|^2 \right)\di \mathrm{vol}_w^g.
\end{align}
Combining this with (\ref{eq:bochner2}) completes the proof in the case when $\phi$ is smooth.

For general $\phi$, since any Lipschitz function with compact support on $M^n$ can be approximated by smooth functions with compact supports in the sense of $H^{1,p}$ on any compact subset for any $p<\infty$, we conclude.
\end{proof}
Finally let us discuss a relationship between smooth spaces and $\RCD$ spaces.
It is known from 
\cite[Theorem 1.1]{Han} that a complete Riemannian manifold $(M^n, g)$ of dimension $n$ with the smooth boundary $\partial M^n$ and a weighted measure $\mathrm{vol}^g_w$ is an $\RCD(K, \infty)$ space if and only if
 \begin{equation}
     \mathrm{Ric}^g_{\infty, w}\ge K, \quad \Pi_{\partial M^n}^g \ge 0,
 \end{equation}
 where $\Pi_{\partial M^n}^g$ denotes the second fundamental form of the boundary.  
 In particular, combining this with the corresponding finite dimensional Bakry-\'Emery Ricci tensor observed in \cite{Han2}, we can obtain the following (see also \cite[Corollary 2.6]{Han} and \cite[Theorem 2.9]{Ketterer2}), where the later result is due to \cite[Proposition 4.21]{ErbarKuwadaSturm}.
 \begin{theorem}[Characterization of RCD space in smooth framework]\label{thm:han}
     Let $(M^n, g)$ be a complete Riemannian manifold of dimension $n$ possibly with the smooth boundary $\partial M^n$.
     For all $w \in C^{\infty}(M^n)$, $K \in \mathbb{R}$ and $N \in [n, \infty]$, we have the following.
     \begin{enumerate}
         \item Assume that the boundary $\partial M^n$ exists. Then $(M^n, \dist^g, \mathrm{vol}^g_w)$ is an $\RCD(K, N)$ space if and only if  
     $\mathrm{Ric}^g_{N, w} \ge K$ and $\Pi_{\partial M^n}^g \ge 0$.
     \item Assume that $M^n$ has no boundary. Then $(M^n, \dist^g, \mathrm{vol}^g_w)$ is an $\RCD(K, N)$ space if and only if $\mathrm{Ric}_{N, w}^g \ge K$.
     \end{enumerate}
 \end{theorem}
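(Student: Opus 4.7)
The plan is to deduce Theorem \ref{thm:han} by assembling the pointwise Bochner identity (\ref{eq:identi}) with the cited characterizations in \cite{Han, Han2, ErbarKuwadaSturm, Ketterer2}, rather than reproving everything from scratch. The smooth structure allows us to pass freely between pointwise and integrated forms of Bochner, which is the structural backbone of both directions.

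First I would dispatch statement (2), the boundaryless case. On a complete Riemannian manifold with weighted measure $\mathrm{vol}^g_w$, the Cheeger energy coincides with the Dirichlet form $\phi \mapsto \frac{1}{2}\int |\nabla^g \phi|^2 \di \mathrm{vol}^g_w$, so the space is IH and the abstract Laplacian of Definition \ref{def:lap} agrees with $\Delta^g_w$ on smooth functions via integration by parts (no boundary terms appear). For the forward direction, assuming $\mathrm{Ric}^g_{N,w} \ge K$, the pointwise Bochner inequality (\ref{prop:besmooth}) can be tested against admissible $\phi \in D(\Delta) \cap L^{\infty}$ with $\phi \ge 0$ and $\Delta \phi \in L^{\infty}$, yielding the $\BE(K,N)$ condition (after approximating $f \in D(\Delta)$ with $\Delta f \in H^{1,2}$ by smooth compactly supported functions, using density of $C^\infty_c(M^n)$ in $H^{1,p}$). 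The Sobolev-to-Lipschitz property is immediate from smoothness, and the exponential volume growth (\ref{eq:volumegrowth}) follows from the weighted Bishop--Gromov inequality. For the reverse, Theorem \ref{thm:hessboch}(1) upgrades the $\BE$ condition to its integrated Hessian form; evaluating at smooth compactly supported $f$ and $\phi$ yields the pointwise Bochner inequality (\ref{prop:besmooth5}), whence Remark \ref{rem:converse} extracts $\mathrm{Ric}^g_{N,w} \ge K$ pointwise.

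For statement (1), the boundary case, the $N = \infty$ equivalence is exactly \cite[Theorem 1.1]{Han}. The finite-$N$ refinement is obtained by replacing the $\infty$-dimensional Bochner by the $N$-dimensional one through identity (\ref{eq:identi}) and the elementary inequality recalled in Remark \ref{rem:converse}; the finite-dimensional Bakry--\'Emery calculus at the boundary is handled in \cite{Han2}, and \cite[Theorem 2.9]{Ketterer2} packages these into precisely the statement claimed. The convexity condition $\Pi^g_{\partial M^n} \ge 0$ enters from Han's doubling construction: gluing two isometric copies of $M^n$ across $\partial M^n$ produces a complete boundaryless space which is smooth away from $\partial M^n$; this doubled space inherits the weighted Ricci lower bound $\mathrm{Ric}^g_{N,w} \ge K$ on the smooth part iff the boundary is convex, and the doubling is an $\RCD(K,N)$ space iff the original $M^n$ is.

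The main obstacle is the boundary direction: showing that $\RCD(K,N)$ forces $\Pi^g_{\partial M^n} \ge 0$, and conversely that convexity together with $\mathrm{Ric}^g_{N,w} \ge K$ suffices. The necessity uses tangent-cone analysis at boundary points: a point with negative principal curvature would admit a half-space tangent structure violating the splitting/stability properties of $\RCD(0,N)$; equivalently, the doubled manifold would fail to verify the smooth Bochner identity in any neighborhood of $\partial M^n$. The sufficiency invokes Han's doubling argument to transfer the boundaryless case (statement 2) to the boundary setting. Since this technical core is already carried out in \cite{Han, Han2, Ketterer2}, our proof is essentially a careful verification that the finite-dimensional characterization survives the doubling procedure, combined with the standard pointwise-to-integrated Bochner equivalence from Remark \ref{rem:converse}.
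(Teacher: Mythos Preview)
The paper does not give a proof of this theorem at all: it is stated as a known result assembled from the citations \cite[Theorem 1.1]{Han}, \cite{Han2}, \cite[Corollary 2.6]{Han}, \cite[Theorem 2.9]{Ketterer2}, and \cite[Proposition 4.21]{ErbarKuwadaSturm} (the last for the boundaryless case). Your proposal is consistent with this and in fact goes further, sketching how those references fit together; so in that sense your approach and the paper's coincide.

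One minor inaccuracy worth flagging: in your description of Han's doubling construction you write that ``this doubled space inherits the weighted Ricci lower bound $\mathrm{Ric}^g_{N,w} \ge K$ on the smooth part iff the boundary is convex.'' That is not quite right: the smooth part of the double is two copies of the interior of $M^n$, and the pointwise Ricci bound there is inherited regardless of $\Pi^g_{\partial M^n}$. Convexity is what governs whether the \emph{synthetic} curvature bound (the $\RCD$ or $\BE$ condition) survives across the singular gluing locus $\partial M^n$. This does not affect your overall argument, since you correctly defer the technical core to \cite{Han, Han2, Ketterer2}, but the sentence as written misidentifies where convexity enters.
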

 \subsection{Capacity}
 In order to define almost smooth spaces in the next subsection, let us define \textit{zero capacity} as follows. Note that  the capacity itself is well-defined for general metric measure spaces, and that it is compatible with our terminology under PI, see Chapter 6 of \cite{BjornBjorn} for the details.
 
 Let us fix a metric measure space $X$.
 \begin{definition}[Zero $p$-Capacity]\label{def:capzero}
     Let $A$ be a subset of $X$ and let $p \in [1, \infty)$. We say that $A$ has \textit{zero $p$-capacity} if for any open ball $B$, there exists a sequence of locally Lipschitz functions $\phi_i$ on $B$ such that $\phi_i$ is equal to $0$ on a neighborhood of $A \cap B$ and that $\phi_i \to 1$ in $H^{1, p}(B)$.
 \end{definition}
 It is worth mentioning that in the definition above;
 \begin{itemize}
     \item by taking a truncation, with no loss of generality, we can assume that $\phi_i$ as above satisfies $0 \le \phi_i \le 1$ (thus, in the sequel, we always consider such functions);
     \item if $A$ has zero $p$-capacity, then $A$ has also zero $q$-capacity for any $q \le p$;
     \item if $\meas_X(A)=0$ and $A$ is closed, then the last condition, $\phi_i \to 1$ in $H^{1, p}(B)$, is satisfied when $\phi_i \to 1$ holds on each compact subset of $B$ with 
     \begin{equation}
         \sup_i\int_B|\nabla \phi_i|^p\di \meas_X<\infty
     \end{equation}
     because of applying Mazur's lemma (see the proof of Proposition \ref{prop:capzero}).
 \end{itemize} 
 \begin{definition}[Codimension]\label{def:codimension}
     Let $A$ be a 
     subset of $X$ and let $1\le p<\infty$. We say that the codimension of $A$ is \textit{at least $p$} (in the sense of Minkowski) if 
     \begin{equation}
         \limsup_{\epsilon \to 0}\epsilon^{-p}\meas_X(B_{\epsilon}(A \cap B))<\infty
     \end{equation}
     holds for any open ball $B$ (recall that $B_{\epsilon}(A)$ denotes the $\epsilon$-open neighborhood of $A$ in $X$). 
 \end{definition}
 \begin{proposition}\label{prop:capzero}
     Let $A$ be a 
     subset of $X$ whose codimension is at least $p$. Then $A$ has zero $p$-capacity.
 \end{proposition}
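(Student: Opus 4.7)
I would first replace $A$ by its closure $\bar A$. Since $B_\epsilon(S)$ depends only on $\bar S$ and $\bar A \cap B \subset \overline{A \cap B}$ for any open $B$, one has $\meas_X(B_\epsilon(\bar A \cap B)) \le \meas_X(B_\epsilon(A \cap B))$, so the codimension-$\ge p$ hypothesis passes to $\bar A$. A cutoff vanishing on a neighborhood of $\bar A \cap B$ also vanishes on one of $A \cap B$, so I may assume $A$ is closed. The hypothesis then yields $\meas_X(A \cap B) \le \meas_X(B_\epsilon(A \cap B)) \le C\epsilon^p \to 0$, hence $\meas_X(A)=0$ and $d_A(x) := \dist_X(x, A) > 0$ for $\meas_X$-a.e.\ $x$.

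\textbf{Lipschitz approximants.} Fix an open ball $B = B_R(x_0)$. For each small $\epsilon>0$ set
\[
\phi_\epsilon(x) := \min\Bigl(1,\, \max\bigl(0,\, \tfrac{1}{\epsilon}(d_A(x)-\epsilon)\bigr)\Bigr).
\]
Then $0\le \phi_\epsilon\le 1$, $\phi_\epsilon\equiv 0$ on the open neighborhood $\{d_A<\epsilon\}$ of $A$, $\phi_\epsilon\equiv 1$ on $\{d_A\ge 2\epsilon\}$, and $\phi_\epsilon$ is $1/\epsilon$-Lipschitz with $|\nabla \phi_\epsilon|$ supported in the annulus $\{\epsilon<d_A<2\epsilon\}$. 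For $\epsilon < R/2$, any $x\in B$ with $d_A(x)<2\epsilon$ has its nearest points of $A$ inside $A\cap 2B$; hence $\{x\in B: d_A(x)<2\epsilon\}\subset B_{2\epsilon}(A\cap 2B)$. Applying the codimension hypothesis on the ball $2B$ gives
\[
\int_B |\nabla \phi_\epsilon|^p \di \meas_X \le \epsilon^{-p}\, \meas_X\bigl(B_{2\epsilon}(A\cap 2B)\bigr) \le C
\]
uniformly in small $\epsilon$, so $\{\phi_\epsilon\}$ is bounded in $H^{1,p}(B)$.

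\textbf{Mazur's lemma.} Since $d_A>0$ almost everywhere, $\phi_\epsilon(x)\to 1$ pointwise a.e.\ on $B$, hence in $L^p(B)$ by dominated convergence. By the third remark following Definition \ref{def:capzero} (where the closedness of $A$ and $\meas_X(A)=0$ have already been arranged), the uniform $H^{1,p}$-bound together with this convergence is exactly what is needed to extract, via Mazur's lemma applied to the weak limit $1\in H^{1,p}(B)$, convex combinations $\psi_n = \sum_{j\ge n}^{M_n} c_j^{(n)} \phi_{\epsilon_j}$ converging strongly to $1$ in $H^{1,p}(B)$. Each $\psi_n$ is a finite convex combination of functions vanishing on open neighborhoods of $A$, so $\psi_n \equiv 0$ on the open set $\{d_A < \min_{n\le j\le M_n}\epsilon_j\}$, which is a neighborhood of $A\cap B$. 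This produces the sequence demanded by Definition \ref{def:capzero}.

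\textbf{Main obstacle.} The genuine delicate points are (i) preserving the codimension condition when reducing to $\bar A$, which rests on $B_\epsilon(\cdot)$ depending only on the closure, and (ii) the boundary effect near $\partial B$, handled by invoking the codimension hypothesis on the slightly enlarged ball $2B$. The use of convex combinations is essential in the borderline case when the codimension is exactly $p$: there the annular measure $\meas_X(B_{2\epsilon}(A))$ is only bounded by $C\epsilon^p$, so $\int_B|\nabla \phi_\epsilon|^p\,\di\meas_X$ is merely bounded (not going to $0$), and $\phi_\epsilon$ itself need not converge strongly in $H^{1,p}$; Mazur's lemma bridges this gap.
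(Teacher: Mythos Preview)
Your proof is correct and follows essentially the same approach as the paper: build Lipschitz cutoffs from the distance function, bound their $H^{1,p}$-seminorm via the codimension hypothesis, and upgrade weak to strong convergence via Mazur's lemma. The only cosmetic differences are that the paper uses $\dist_X(A\cap B,\cdot)$ directly (so the $2B$ boundary argument is unnecessary) and builds cutoffs equal to $1$ near $A$ before passing to $1-g_{\epsilon_i}$ at the end.
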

 \begin{proof}
     For any $\epsilon>0$, find a Lipschitz function $\phi_{\epsilon}$ on $[0, \infty)$ satisfying that $0\le \phi_{\epsilon} \le 1$, that $\phi_{\epsilon}=1$ on $[0, \epsilon]$, that $|\nabla \phi_{\epsilon}| \le \epsilon^{-1}$, and that $\supp \phi_{\epsilon} = [0, 2\epsilon]$. Then letting $f_{\epsilon}(x):=\phi_{\epsilon} (\dist_X(A \cap B, x))$, we have 
     \begin{equation}
         \int_B|f_{\epsilon}|^p\di \meas_X \to 0
     \end{equation}
     and 
     \begin{equation}
         \int_B|\nabla f_{\epsilon}|^p\di \meas_X \le C\meas_X\left( B_{2\epsilon}(A \cap B)\right) \cdot \epsilon^{-p} \le C.
     \end{equation}
     Thus applying Mazur's lemma, after taking some finite convex combinations $g_{\epsilon_i}$ of $\{f_{\epsilon}\}_{\epsilon>0}$, for some $\epsilon_i \to 0$, we see that $g_{\epsilon_i}=1$ on a neighborhood of $A \cap B$ and that $g_{\epsilon_i} \to 0$ in $H^{1,p}(B)$. Thus, considering $1-g_{\epsilon_i}$, we conclude. 
 \end{proof}
 \begin{proposition}\label{prop:codi4}
     Let $A$ be a closed subset of $X$ whose codimension is at least $4$. If $X$ is a PI geodesic space with IH and a QL, then $D_c(\Delta, X \setminus A)$ is dense in $D(\Delta)$. 
 \end{proposition}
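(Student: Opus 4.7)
The plan is to approximate $f\in D(\Delta)$ in two stages: first reduce to compactly supported and sufficiently regular $f$ via macroscopic cut-offs and heat-flow mollification, then multiply by cut-offs vanishing on a shrinking tubular neighborhood of $A$, with the codimension-$4$ hypothesis providing the integral bounds. The final step upgrades weak convergence of the Laplacians to strong convergence via Mazur's lemma.

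For the first stage, multiplying by the cut-off $\phi_R$ from Theorem \ref{thm:cutoff} gives $\phi_R f\in D_c(\Delta)$ with
$\Delta(\phi_R f) - \Delta f = (\phi_R - 1)\Delta f + 2\langle \nabla \phi_R,\nabla f\rangle + f\Delta \phi_R$,
each summand vanishing in $L^2$ as $R\to\infty$ since $|\nabla \phi_R|+|\Delta \phi_R|$ is uniformly bounded with support in shells escaping to infinity while $f,|\nabla f|,|\Delta f|\in L^2$. A further heat-flow mollification $h_t(\phi_R f)$, using Corollary \ref{cor:lipreg} to secure $|\nabla h_t(\phi_R f)|\in L^\infty$, followed by a second macroscopic cut-off, allows us to assume $f\in D_c(\Delta)$ with $f,|\nabla f|\in L^\infty$ and $\supp f\subset K$ for some compact $K$.

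For the second stage, fix a relatively compact open $K'$ containing a neighborhood of $K$. Starting from a Lipschitz template $\eta_\epsilon$ with $|\nabla \eta_\epsilon|\le C/\epsilon$ supported in $B_{2\epsilon}(A)\cap K'$ and equal to $1$ near $A\cap K$, the heat-flow-and-composition construction used in the proof of Theorem \ref{thm:cutoff} (at time $s\sim\epsilon^2$) produces $\chi_\epsilon\in D(\Delta)$ with $0\le \chi_\epsilon\le 1$, $\chi_\epsilon=0$ on a neighborhood of $A\cap K$, $\chi_\epsilon=1$ outside $B_{C\epsilon}(A)\cap K'$, $|\nabla \chi_\epsilon|\le C/\epsilon$, and $|\Delta \chi_\epsilon|\le C/\epsilon^2$. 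Invoking the codimension-$4$ hypothesis $\meas_X(B_{C\epsilon}(A\cap K'))\le C\epsilon^4$ then yields
\begin{equation}
\|\nabla \chi_\epsilon\|_{L^2}^2 \le \frac{C}{\epsilon^2}\cdot C\epsilon^4 \longrightarrow 0, \qquad \|\Delta \chi_\epsilon\|_{L^2}^2 \le \frac{C}{\epsilon^4}\cdot C\epsilon^4 = C,
\end{equation}
the second bound being only uniform in $\epsilon$.

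The candidates $f_\epsilon:=\chi_\epsilon f$ lie in $D_c(\Delta,X\setminus A)$ by the Leibniz rule, and $f_\epsilon\to f$ in $L^2$. In the splitting
$\Delta f_\epsilon - \Delta f = (\chi_\epsilon-1)\Delta f + 2\langle \nabla \chi_\epsilon,\nabla f\rangle + f\Delta \chi_\epsilon,$
the first term vanishes in $L^2$ by dominated convergence and the second by $\|\nabla \chi_\epsilon\|_{L^2}\to 0$ combined with $|\nabla f|\in L^\infty$. The third term is the principal obstacle since it is only $L^2$-bounded; however, for any $\psi\in \mathrm{Lip}_c(X)$ integration by parts against the definition of $\Delta \chi_\epsilon$ gives
\begin{equation}
\int_X \psi f\Delta \chi_\epsilon \di \meas_X = -\int_X \langle \nabla(\psi f),\nabla \chi_\epsilon\rangle \di \meas_X,
\end{equation}
whose absolute value is bounded by $\|\nabla(\psi f)\|_{L^2}\|\nabla \chi_\epsilon\|_{L^2}\to 0$, so $f\Delta \chi_\epsilon\to 0$ weakly in $L^2$. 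Consequently $\Delta f_\epsilon \to \Delta f$ weakly in $L^2$, and Mazur's lemma applied to this $L^2$-bounded sequence furnishes convex combinations $\tilde f_n:=\sum_i \lambda_i^n \chi_{\epsilon_i^n}f$ with $\Delta \tilde f_n \to \Delta f$ strongly in $L^2$ and $\tilde f_n\to f$ in $L^2$; since $D_c(\Delta,X\setminus A)$ is a linear subspace, the $\tilde f_n$ realize the required density. The borderline nature of codimension $4$ is the main hurdle, as $\|\Delta \chi_\epsilon\|_{L^2}$ does not tend to zero, making the weak-to-strong promotion through Mazur's lemma essential.
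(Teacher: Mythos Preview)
Your argument is correct and mirrors the paper's own proof: reduce to bounded, compactly supported $f$; multiply by good cut-offs at scale $\epsilon$ vanishing near $A$ with $|\Delta\chi_\epsilon|\le C\epsilon^{-2}$; invoke the codimension-$4$ bound for $L^2$-boundedness of $\Delta(\chi_\epsilon f)$; and close via weak convergence plus Mazur's lemma. The only slip is your appeal to Corollary~\ref{cor:lipreg} for $h_t(\phi_R f)$, which as stated requires $\phi_R f\in L^\infty\cap L^2$ and this is not guaranteed from $f\in D(\Delta)$ alone; the paper handles this by truncating $f$ \emph{before} applying the heat flow (and cutting off only afterwards), though you could equally observe that the local $L^\infty$ bounds on $h_t(\phi_R f)$ and $|\nabla h_t(\phi_R f)|$ that you actually need on the compact support follow directly from the Gaussian heat-kernel estimates of Theorems~\ref{thm:gaus} and~\ref{thm:gradient}.
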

 \begin{proof}
 Fix $\phi \in D(\Delta)$. Our goal is to find a sequence $\phi_i \in D_c(\Delta, X \setminus A)$ with $\phi_i \to \phi$ in $H^{1,2}(X)$ and $\Delta \phi_i \to \Delta \phi$ in $L^2(X)$. 
 
 \textbf{Step 1}: \textit{We can assume that $\phi, \Delta \phi \in L^{\infty}(X)$.}

 The proof is as follows. For any $L \ge 1$, let us consider a truncation;
 \begin{equation}\label{eq:trunc}
 \phi^L:=\max\left\{ \min \left\{ \phi, L\right\}, -L\right\}.
 \end{equation}

 Then Corollary \ref{cor:lipreg} tells us that $|\nabla h_t\phi^L| \in L^{\infty}(X)$. Moreover the same arguments as in (\ref{eq:laplacianbound}) (or just applying Theorem \ref{thm:gaus}) allow us to conclude $|\Delta h_t\phi^L| \in L^{\infty}(X)$. Note that $h_t\phi^L \to h_t \phi$ in $D(\Delta)$ as $L \to \infty$ because of $\phi^L \to \phi$ in $L^2(X)$ with (\ref{eq:heatflowbound}), and that $h_t\phi \to \phi$ in $D(\Delta)$ as $t \to 0$ because of (2) of Proposition \ref{prop:heatconv}. Thus we have \textbf{Step 1} by a diagonal argument.

 \textbf{Step 2}: \textit{Furthermore we can assume that $\phi$ has compact support.}
 
Thanks to Theorem \ref{thm:cutoff} (see also Proposition \ref{rem:lapcut}), we can find 
a sequence of good cut-off functions $\rho_i \in D_c(X)$ with $\sup_i(\|\nabla \rho_i\|_{L^{\infty}} + \|\Delta \rho_i\|_{L^{\infty}})<\infty$ and $\rho_i \to 1$ on each compact subset of $X$. Since $\rho_i\phi$ converge weakly to $\phi$ in $D(\Delta)$, applying Mazur's lemma completes the proof. 

\textbf{Step 3}: \textit{Conclusion.}

 Find an open ball $B$ with $\supp \phi \subset B$. For any sufficiently small $\epsilon>0$, taking a maximal $\epsilon$-separated set of $\supp \phi \setminus B_{2\epsilon}(A)$ with Theorem \ref{thm:cutoff} (after a rescaling $\epsilon^{-1}\dist_X$ if necessary), we see that there exists a good cut-off function $\psi_{\epsilon} \in D(\Delta)$ such that $\epsilon^2|\Delta \psi_{\epsilon}| +\epsilon|\nabla \psi_{\epsilon}| \le C$, $\supp \psi_{\epsilon} \subset B$, that $\psi_{\epsilon} |_{\supp \phi \cap B_{\epsilon}(A\cap B)}=0$, and that $\psi_{\epsilon}|_{\supp \phi\setminus B_{2\epsilon}(A \cap B)}=1$.

 Letting $\phi_{\epsilon}:=\psi_{\epsilon}\phi \in D_c(\Delta)$, we have
 \begin{align}
     \int_X|\Delta \phi_{\epsilon}|^2\di \meas_X&\le C\int_B\left( \psi_{\epsilon}^2(\Delta \phi)^2+|\nabla \psi_{\epsilon}|^2|\nabla \phi|^2 +\phi^2(\Delta \psi_{\epsilon})^2\right)\di \meas_X \nonumber \\
     &\le C+C\int_B|\nabla \psi_{\epsilon}|^2\di \meas_X+C\int_B(\Delta \psi_{\epsilon})^2\di \meas_X \nonumber \\
     &\le C + C \meas_X(B_{2\epsilon}(A \cap B)) \epsilon^{-2}+ C \meas_X(B_{2\epsilon}(A \cap B)) \epsilon^{-4}\le C.
 \end{align}
Similarly we have $\int_X|\nabla \phi_{\epsilon}|^2\di \meas_X\le C$. Thus, we have the weak convergence of $\phi_{\epsilon} \to \phi$ in $D(\Delta)$. Therefore, applying Mazur's lemma completes the proof.
 \end{proof}
 \begin{remark}
In the same setting as in Proposition \ref{prop:codi4}, we can also prove; if $X \setminus A$ is smooth in the sense of Definition \ref{def:smooth}, which will be explained below, then $C^{\infty}_c(X\setminus A)$ is dense in $D(\Delta)$.
 \end{remark}
 \begin{corollary}\label{cor:charc}
     Let $X$ be a PI geodesic space with IH, the SL and a QL, and let $K \in \mathbb{R}, N\in [1, \infty]$. If a locally $\BE(K, N)$ open subset $V$ of $X$ satisfies that its complement $X \setminus V$ has at least codimension $4$, then $X$ is a $\RCD(K, N)$ space.
 \end{corollary}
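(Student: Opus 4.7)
By Theorem \ref{thm:LtoG} it suffices to show that $X$ itself is locally $\BE(K, N)$, which by Definition \ref{def:localrcd} with $U = X$ amounts to verifying, for every admissible test pair $\phi \in D(\Delta) \cap L^\infty(X)$ (compact support in $X$, $\phi \ge 0$, $\Delta \phi \in L^\infty$) and $f \in D(\Delta)$ with $\Delta f \in H^{1,2}(X)$, the Bochner inequality
\begin{equation*}
\frac{1}{2}\int_X \Delta \phi\, |\nabla f|^2 \di \meas_X \ge \int_X \phi \left(\frac{(\Delta f)^2}{N} + \langle \nabla \Delta f, \nabla f\rangle + K|\nabla f|^2\right)\di \meas_X.
\end{equation*}
The key observation is that both sides are linear in $\phi$ for fixed $f$, so it is enough to approximate only $\phi$ by functions supported inside $V$ and to exploit the hypothesis that $V$ is locally $\BE(K, N)$.

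Setting $A := X \setminus V$, which has null measure by the codimension $4$ assumption, I would follow the construction in the proof of Proposition \ref{prop:codi4} to produce cutoffs $\psi_\epsilon$ with $\psi_\epsilon = 0$ on $B_\epsilon(A)$, $\psi_\epsilon = 1$ outside $B_{2\epsilon}(A)$, and $\epsilon |\nabla \psi_\epsilon| + \epsilon^2 |\Delta \psi_\epsilon| \le C$. Set $\phi_\epsilon := \psi_\epsilon \phi$; then $\phi_\epsilon \in D_c(\Delta, V) \cap L^\infty$, $\phi_\epsilon \ge 0$, $\Delta \phi_\epsilon \in L^\infty$, and by the codimension $4$ bound $\meas_X(B_{2\epsilon}(A \cap K)) \le C\epsilon^4$ on compact sets one checks, exactly as in that proof, that $\phi_\epsilon \to \phi$ weakly in $D(\Delta)$. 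Mazur's lemma then yields convex combinations converging strongly in the $D(\Delta)$-norm while preserving the sign, support, and $L^\infty$ properties, and dominated convergence gives $\phi_\epsilon \to \phi$ in every $L^p(X)$ with $p<\infty$. For each $\epsilon$, applying the locally $\BE(K, N)$ hypothesis on $V$ to the pair $(\phi_\epsilon, f|_V)$—noting that $f|_V \in D(\Delta, V)$ with $\Delta f|_V \in H^{1,2}(V)$ by the locality of the gradient and Laplacian—yields the Bochner inequality for this pair on $V$; since $\phi_\epsilon$ vanishes on $A$ and $\meas_X(A) = 0$, the integrals over $V$ agree with those over $X$.

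To pass to the limit $\epsilon \to 0$, the right-hand integrand is dominated pointwise by $\|\phi\|_{L^\infty}\bigl[(\Delta f)^2/N + |\nabla \Delta f| |\nabla f| + |K| |\nabla f|^2\bigr] \in L^1(X)$, so dominated convergence handles it. For the left-hand side, since $\phi_\epsilon$ is supported in a fixed compact set $K$ and $\Delta f \in H^{1,2}(X)$, the Sobolev–Poincar\'e inequality (Proposition \ref{prop:poincaresobolev}(4)) yields $\Delta f \in L^p_\loc$ for some $p > 2$, and then Theorem \ref{thm:Poisson} gives $|\nabla f| \in L^\infty_\loc$. In particular $|\nabla f|^2 \in L^2$ on a neighborhood of $K$, so the strong $L^2$-convergence of $\Delta \phi_\epsilon$ on this neighborhood is enough to pass the integral $\int \Delta \phi_\epsilon |\nabla f|^2$ to its desired limit. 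This completes the verification that $X$ is locally $\BE(K, N)$, hence an $\RCD(K,N)$ space by Theorem \ref{thm:LtoG}.

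The principal technical obstacle is exactly the left-hand limit passage: $\Delta \phi_\epsilon$ cannot be made to converge in $L^\infty$ because $|\Delta \psi_\epsilon|$ blows up like $\epsilon^{-2}$ on $B_{2\epsilon}(A)$, and the codimension $4$ assumption only gives a uniform $L^2$ (not $L^\infty$) control on $\Delta \phi_\epsilon$. This is why the elliptic-regularity improvement $\Delta f \in H^{1,2} \Rightarrow |\nabla f| \in L^\infty_\loc$—which in our non-smooth framework relies crucially on the PI and QL assumptions via Theorem \ref{thm:Poisson}—is indispensable to the argument.
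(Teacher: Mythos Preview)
Your overall strategy---reduce to showing that $X$ is locally $\BE(K,N)$ and then invoke Theorem~\ref{thm:LtoG}---is correct, and approximating $\phi$ by functions supported in $V$ is a natural idea. The gap is in the limit passage for the left-hand side. You claim that $\Delta f \in H^{1,2}(X)$ implies, via Sobolev embedding and Theorem~\ref{thm:Poisson}, that $|\nabla f| \in L^\infty_{\loc}(X)$. This is not justified: Theorem~\ref{thm:Poisson} as stated requires $\Delta f \in L^\infty(B)$, not merely $\Delta f \in L^p$ for some $p>2$. Even reading the estimate more liberally, the sum $\sum_{j\le 0} 2^{j}\bigl(\intav_{B_{2^j}(y)} |\Delta f|^{q}\bigr)^{1/q}$ is finite only under a Morrey-type control that $\Delta f\in L^{p}$ with $p>Q$ would provide, whereas Sobolev embedding gives only $p=2Q/(Q-2)$, which exceeds $Q$ iff $Q<4$. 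So in general you cannot conclude $|\nabla f| \in L^\infty_{\loc}$, nor even $|\nabla f|^2\in L^2_{\loc}$, and without that the strong $L^2$-convergence of $\Delta\phi_\epsilon$ (which is all Mazur buys you) does not pass the integral $\int\Delta\phi_\epsilon\,|\nabla f|^2$ to its limit. The codimension~$4$ hypothesis controls $\Delta\phi_\epsilon$ in $L^2$ but says nothing about $|\nabla f|$.

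The paper's proof sidesteps this by approximating $f$ rather than $\phi$: Proposition~\ref{prop:codi4} produces $f_i \in D_c(\Delta, V)$ with $f_i \to f$ in $D(\Delta)$. For each $f_i$ one multiplies $\phi$ by a good cut-off equal to $1$ on $\supp f_i$ and supported in $V$; by the localities \eqref{eq:localitygrad} and \eqref{eq:localitylap} this does not change any of the integrals. One then applies the equivalent formulation~(2) of Proposition~\ref{prop:charbe} on $V$, which involves $(\Delta f_i)^2$ and $\Delta f_i\langle\nabla f_i,\nabla\phi\rangle$ in place of $\langle\nabla\Delta f_i,\nabla f_i\rangle$. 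The limit $i\to\infty$ is now elementary because $\Delta\phi, |\nabla\phi|\in L^\infty$ are \emph{fixed} while $\nabla f_i \to \nabla f$ and $\Delta f_i \to \Delta f$ in $L^2$; in particular $|\nabla f_i|^2\to|\nabla f|^2$ in $L^1$ pairs directly against $\Delta\phi\in L^\infty$, and no higher integrability of $|\nabla f|$ is needed.
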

 \begin{proof}
 Applying Proposition \ref{prop:codi4} to given $f \in D(\Delta)$, we have (\ref{eq:weakboch}) as $U=X$, where we immediately used multiplying good cut-off functions to $\phi$, localities (\ref{eq:localitygrad}) and (\ref{eq:localitylap}). Thus the conclusion is a direct consequence of 
  Proposition \ref{prop:charbe} and Theorem \ref{thm:LtoG}.
 \end{proof}
\subsection{Almost smooth space}
Let us fix a proper metric measure space $X$.
\begin{definition}[Smoothness]\label{def:smooth}
    We say that an open subset $U$ of $X$ is $n$-dimensionally \textit{smooth} if for any $x \in U$ there exist a (possibly incomplete) Riemannian manifold $(M^n, g)$ of dimension $n$ and an open neighborhood $V$ of $x$ in $U$ such that
        \begin{enumerate}
            \item{($C^{\infty}$-metric structure)} $V$ is isometric to $(M^n, g)$ as metric spaces;
            \item{($C^{\infty}$-measure structure)} after identifying $V$ with $M^n$, the reference measure $\meas_X$ on $V$ can be written as 
        \begin{equation}\label{eq:smoothweight}
            e^{-w}\di \haus^n
        \end{equation}
        for some smooth function $w$.
        \end{enumerate}
\end{definition}
In the same setting as in Definition \ref{def:smooth}, let us denote by $\mathrm{Ric}_{N, X}$ the $N$-Bakry-\'Emery Ricci curvature on $U$ under identifying $V$ with $M^n$. 

The following is a direct consequence of Theorem \ref{thm:han} with Remark \ref{rem:converse}.
\begin{proposition}\label{prop:almostsmoothrcd}
    For an $n$-dimensionally smooth open subset $U$ of $X$ and all $K \in \mathbb{R}$ and $N \in [n, \infty]$, we see that $\mathrm{Ric}_{N, X} \ge K$ holds on $U$ if and only if $U$ is a locally $\RCD(K, N)$ space. 
    \end{proposition}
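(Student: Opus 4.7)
The proof combines Theorem \ref{thm:han} with Proposition \ref{prop:belo} and Remark \ref{rem:converse}, treating the two implications separately.

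For the implication from locally $\RCD(K,N)$ to $\mathrm{Ric}_{N,X}\ge K$, the plan is to apply Proposition \ref{prop:belo}, which yields that $U$ is a locally $\BE(K,N)$ open subset of $X$. Fixing $x\in U$ with a smooth chart $(M^n,g,w)$ around $x$, any $f\in C_c^{\infty}$ supported in this chart lies in $D(\Delta,U)$ with $\Delta f=\Delta_w^g f$, by the standard density and integration-by-parts observations recalled right after Definition \ref{def:weighted}. Testing the integrated Bochner inequality of Definition \ref{def:localrcd} against a smooth nonnegative bump $\phi\in C_c^{\infty}$ concentrating at an arbitrary point $p$ of the chart, the continuity of all integrands on the smooth chart yields the pointwise weak Bochner inequality (\ref{prop:besmooth5}) at $p$. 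Remark \ref{rem:converse} then converts it into $\mathrm{Ric}_{N,w}^g(p)\ge K$, proving $\mathrm{Ric}_{N,X}\ge K$ on $U$.

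For the forward implication, the plan is to realize a neighborhood of each $x\in U$ as an open subset of a genuine $\RCD(K,N)$ space by applying Theorem \ref{thm:han}(1) to a small closed geodesic ball. Choose a smooth chart $(M^n,g,w)$ around $x$, and take $r>0$ smaller than both the injectivity radius and the convexity radius at $x$, with $\bar B_r(x)$ compactly contained in the chart. For such $r$ the closed ball $\bar B_r(x)$, with induced Riemannian structure, is a compact (in particular complete) Riemannian manifold with smooth boundary $\partial B_r(x)$; its intrinsic Riemannian distance coincides with the restriction of the ambient one by strong convexity; and the inward second fundamental form of $\partial B_r(x)$ equals $\mathrm{Hess}^g(\dist_{M^n}(x,\cdot))$ restricted to the tangent spaces of the sphere, which is asymptotic to $r^{-1}g$ and hence positive definite for small $r$. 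Since $\mathrm{Ric}_{N,w}^g\ge K$ on the chart by hypothesis, Theorem \ref{thm:han}(1) identifies $(\bar B_r(x),g|_{\bar B_r(x)},\mathrm{vol}^g_w|_{\bar B_r(x)})$ as an $\RCD(K,N)$ space, and $W:=B_r(x)$ is isometric as a metric measure space to an open subset of this $\RCD(K,N)$ space, verifying Definition \ref{def:lrcd}.

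The only mildly delicate step is ensuring in the forward direction that smallness of the convexity radius, the injectivity radius, and positivity of $\mathrm{Hess}(\dist_{M^n}(x,\cdot))$ on the geodesic sphere can be controlled simultaneously; these are classical Riemannian facts depending only on the smooth data near $x$ and hence hold for all sufficiently small $r$. In the reverse direction, the passage from the integrated to the pointwise Bochner inequality is automatic on a smooth chart by continuity of the integrands.
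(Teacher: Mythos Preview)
Your forward implication is correct and matches the paper's intention: pass to a small strongly convex closed geodesic ball in the smooth chart, observe that the second fundamental form of its boundary sphere is positive for small radius, and apply Theorem~\ref{thm:han}(1).

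The reverse implication has a genuine, though easily repaired, gap. Proposition~\ref{prop:belo}---and in fact the very notion of a locally $\BE$ open subset in Definition~\ref{def:localrcd}---is formulated under the standing hypothesis of Section~\ref{sec:3} that the ambient space $X$ is infinitesimally Hilbertian. In Section~\ref{sec:as}, however, $X$ is only assumed proper, and nothing about the smooth open subset $U$ forces $X$ to be IH (the complement $X\setminus U$ is unconstrained). So you are not entitled to invoke Proposition~\ref{prop:belo} as stated. A second, smaller issue is that Proposition~\ref{prop:belo} is stated only for $N\in[1,\infty)$, while the present proposition also covers $N=\infty$.

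Both problems disappear if you run exactly the same mechanism directly in the $\RCD(K,N)$ space $Y$ furnished by Definition~\ref{def:lrcd}, rather than in $X$. For $x\in U$ take the isometry between a neighborhood $W\subset U$ and an open $V\subset Y$; smooth compactly supported $f,\phi$ on $W$ transfer to functions on $Y$ lying in $D(\Delta_Y)$ with $\Delta_Y f=\Delta_w^g f\in H^{1,2}(Y)$ (since the local Sobolev spaces and Laplacians agree under the metric measure isometry). The Bochner inequality on $Y$ then yields the integrated inequality on the chart with the smooth weighted Laplacian, after which your concentration argument and Remark~\ref{rem:converse} give $\mathrm{Ric}_{N,w}^g\ge K$ at $x$. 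This is the route the paper is signalling by citing Remark~\ref{rem:converse}; your core argument is right, only the detour through Proposition~\ref{prop:belo} is unwarranted here.
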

    We are now in a position to introduce \textit{almost smooth metric measure spaces}, as being smooth except for a closed subset of zero $2$-capacity.
\begin{definition}[Almost smooth metric measure space]\label{def:alsm}
    We say that $X$ is $n$-dimensionally \textit{almost smooth} if there exist a closed set $\mathcal{S}=\mathcal{S}^{C^{\infty}}$, called the \textit{singular set} of $X$ (from the point of view of smoothness), such that the following hold.
    \begin{enumerate}
        \item{(Smoothness)} Let $\mathcal{R}=\mathcal{R}^{C^{\infty}}:=X \setminus \mathcal{S}$ (and call it the \textit{smooth part}, or the \textit{regular set} of $X$). Then $\mathcal{R}$ is smooth in the sense of Definition \ref{def:smooth}.
        \item{(Zero capacity)} $\mathcal{S}$ has zero $2$-capacity. 
    \end{enumerate}
    Moreover $X$ is said to be \textit{non-collapsed} if $w$ as in (\ref{eq:smoothweight}) is equal to $0$. 
\end{definition}
In general, the Hausdorff dimension, denoted by $\mathrm{dim}_HX$, of $X$ is at least $n$ if $X$ is an $n$-dimensional almost smooth metric measure space. However there exists an example such that $\mathrm{dim}_HX>n$ even if it is additionally an $\RCD(K,N)$ space, see \cite[Theorem A]{PW} (see also \cite[Theorem A]{Pan} and \cite[Theorem 3.5]{DHPW}).
\begin{remark}[Example of PI, {\color{blue}SL}, flat, but not RCD]\label{rem:example}
Let us provide a globally PI almost smooth non-collapsed metric measure space $Z$ whose metric structure is flat on the smooth part, however $Z$ is not a locally $\BE(K, \infty)$ space for any $K$, thus, in particular, it is not an $\RCD(K, N)$ space for all $K, N$.

Firstly recall the definition of the metric cone $C(Y)$ over a metric space $Y$; 
\begin{equation}\label{eq:conenotation}
C(Y):=([0, \infty)\times Y)/(\{0\}\times Y),\quad p_{C(Y)}:=[(0, y)]
\end{equation}
where the distance $\dist_{C(Y)}$ is defined by
\begin{equation}
    \dist_{C(Y)}\left((t_1, y_1), (t_2, y_2)\right):=\sqrt{t_1^2+t_2^2-2t_1t_2\cos \left(\min \{\dist_Y(y_1, y_2), \pi\}\right)}.
\end{equation}
For any $r>0$, let $\mathbb{S}^1(r):=\{x \in \mathbb{R}^2| |x| =r\}$ equipped with the standard Riemannian metric $g_{\mathbb{S}^1(r)}$.
Then fixing  $\epsilon>0$, put $Z:=C(\mathbb{S}^1(1+\epsilon))$ equipped with the Hausdorff measure $\haus^2$ of dimension $2$. Let us prove that $Z$ satisfies the desired properties as follows, where note that $Z$ is the metric completion of $(0, \infty) \times \mathbb{S}^1(1+\epsilon)$ equipped with the cone metric $(dr)^2+r^2g_{\mathbb{S}^1(1+\epsilon)}$. See also \cite[Section 1]{CheegerColding}.

Notice that $Z$ is flat except for the pole $p=p_{C(\mathbb{S}^1(1+\epsilon))}$, and that a singleton $\{p\}$ has at least codimension $2$. In particular by Proposition \ref{prop:capzero} we know that $Z$ is $2$-dimensional almost smooth non-collapsed metric measure space. It is trivial by definition that $Z$ is a proper geodesic space. It is also worth mentioning that $Z \setminus \{p\}$ is weakly convex (see \cite[Definition 6.4]{Deng} for the definition).

Defining a map $f:\mathbb{S}^1(1+\epsilon) \to \mathbb{S}^1(1)$ by
$f(x):=\frac{1}{1+\epsilon}x$,
consider the induced bijective map $\phi_f:Z=C(\mathbb{S}^1(1+\epsilon)) \to C(\mathbb{S}^1(1)) \cong \mathbb{R}^2$ defined by
$
    \phi_f(t, y):=\left(t, f(y)\right)
$
under the notation (\ref{eq:conenotation}).
Then it is easy to see that $\phi_f$ is bi-Lipschitz because, for instance, since the differential $D\phi_f:T_xZ \to T_{\phi_f(x)}\mathbb{R}^2$ at any smooth point $x \in Z \setminus \{p\}$ satisfies $\frac{1}{1+\epsilon}|v| \le |D\phi_f(v)| \le |v|$ for any $v \in T_xZ$, comparing the lengths of curves of $\gamma:[0, 1] \to Z$ and of $\phi_f \circ \gamma:[0, 1] \to \mathbb{R}^2$, we conclude.

Recalling the stability of a globally PI condition under a metric measure bi-Lipschitz map (see Remark \ref{rem:bilipinv}), we know that $Z$ is globally PI. In particular, any $f \in H^{1,2}(Z)$ with $|\nabla f| \le 1$ for $\meas_Z$-a.e. has the Lipschitz representative (see Remark \ref{liprepresentative}). Thus recalling the weak convexity of $Z \setminus \{p\}$, we know that such $f$ has the $1$-Lipschitz representative, namely $Z$ verifies the SL.

It is easy to see by the splitting theorem \cite[Theorem 1.4]{Giglis} that $Z$ is not an $\RCD(0, N)$ space for all $N<\infty$.
Together with the scale invariance of $Z$, we see that $Z$ is not an $\RCD(K, N)$ space for all $K, N<\infty$. Next let us also check that $Z$ is not an $\RCD(K, \infty)$ space for any $K$ by contradiction. Assume that $Z$ is an $\RCD(K, \infty)$ space for some $K$. Recalling (\ref{eq:localitylap}) with the smoothness of $Z \setminus \{p\}$, it is easy to see that (\ref{eq:hesstr}) is valid. In particular, since $|\mathrm{Hess}_g|^2\ge \frac{(\Delta f)^2}{2}$, together with (\ref{eq:hessbochner}), we conclude that $Z$ is an $\RCD(K, 2)$ space. This is a contradiction.

Thus, thanks to Theorem \ref{thm:almosttoRCD}, which will be proved in the next section, we immediately know that any QL does not hold on $Z$. However we will be able to give a direct proof of it as follows.

Let $\psi \in C^{\infty}(\mathbb{S}^1(1+\epsilon))$ be an eigenfunction of the first positive eigenvalue $\lambda_1$ of the minus Laplacian, thus $\lambda_1=\frac{1}{(1+\epsilon)^2}$.
Consider a function $h$ on $Z$ defined by
\begin{equation}
    h(t, y)=t^{\lambda_1}\psi(y)=t^{\frac{1}{(1+\epsilon)^2}}\psi(y),
\end{equation}
under the same notation (\ref{eq:conenotation}).
Note that this must be harmonic on $Z$ except for the pole, thus on whole $Z$ by Proposition \ref{prop:capzero} (see also \cite[Theorem 1.67]{CM1} and \cite[the proof of Theorem D]{P}). However $h$ is not Lipschitz around the pole.

Finally let us check that $Z$ is not a locally $\BE(K, N)$ space by contradiction. Thus assume that $Z$ is a locally $\BE(K,N)$ space. Take $\phi \in D(\Delta) \cap L^{\infty}(Z)$ with $\phi \ge 0$ and $\Delta \phi \in L^{\infty}(Z)$, and fix $f \in D(\Delta)$ with $\Delta f \in H^{1,2}(Z)$. Find $\rho_R \in C^{\infty}_c(\mathbb{R})$ with $0 \le \rho_R \le 1$, $\rho_R=1$ on $(-R, R)$, and $\|\frac{\di \rho_R}{\di x}\|_{L^{\infty}}+\|\frac{\di^2\rho_R}{\di x^2}\|_{L^{\infty}} \to 0$ as $R \to \infty$. Letting
\begin{equation}
    \phi_R(z):=\rho_R (\dist_Z(p, z)),
\end{equation}
it is easy to see $\phi_R\phi \in D_c(\Delta) \cap L^{\infty}(Z)$ with $\limsup_{R\to \infty}\|\Delta(\phi_R\phi)\|_{L^{\infty}}<\infty$ because of the smoothness of $Z \setminus \{p\}$ with the elliptic regularity results on the smooth part. Therefore, in the local Bochner inequality;
\begin{equation}
    \frac{1}{2}\int_Z\Delta(\phi_R\phi)|\nabla f|^2\di \meas_Z \ge \int_Z\phi_R\phi \left( \frac{(\Delta f)^2}{N}+\langle \nabla \Delta f, \nabla f\rangle +K|\nabla f|^2\right)\di \meas_Z,
\end{equation}
letting $R \to \infty$ together with the dominated convergence theorem proves the global Bochner inequality;
\begin{equation}
    \frac{1}{2}\int_Z\Delta\phi|\nabla f|^2\di \meas_Z \ge \int_Z\phi \left( \frac{(\Delta f)^2}{N}+\langle \nabla \Delta f, \nabla f\rangle +K|\nabla f|^2\right)\di \meas_Z.
\end{equation}
Thus $Z$ must be an $\RCD(K, N)$ space because of (\ref{eq:expgrowth}) and Proposition \ref{prop:PIStoL}. This is a contradiction.

This observation should be compared with \cite[Question 4.13]{SZ}.
\end{remark}
\section{From almost smooth to RCD}\label{sec:5}
In this section we will discuss when an almost smooth metric measure space is an $\RCD$ space. 
\begin{theorem}\label{thm:be}
    Let $N \ge n$ and let $X$ be an $n$-dimensional almost smooth metric measure space with 
    \begin{equation}
        \mathrm{Ric}_{N, X}\ge K,\quad \text{on $\mathcal{R}$.}
    \end{equation}
    Assume that $X$ is proper and that the following regularity for the heat flow holds for a dense subset $D$ of $L^2(X)$: 
    \begin{itemize}
        \item for all $f \in D$ 
        and $0<t \le 1$, we have $|\nabla h_tf| \in L^{\infty}_{\mathrm{loc}}(X)$.
    \end{itemize} 
    Then $X$ is a locally $\BE(K, N)$ space. Furthermore if
    \begin{itemize}
        \item for all $f \in D$ 
        and $0<t \le 1$, we have $|\nabla h_tf| \in L^{\infty}(X)$,
    \end{itemize} 
    then $X$ is a $\BE(K,N)$ space.
\end{theorem}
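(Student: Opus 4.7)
Plan. The strategy is to lift the smooth pointwise Bochner inequality on the regular set $\mathcal{R}$ to the weak Bochner inequality on all of $X$, absorbing the singular set $\mathcal{S}$ via its zero $2$-capacity, with the heat-flow regularity hypothesis supplying a dense family of sufficiently regular functions. First I would reduce to the case $u := h_s g$ with $g \in D$ and $s \in (0,1]$. Standard $L^2$ semigroup theory gives $u \in \bigcap_k D(\Delta^k)$ and in particular $\Delta u \in H^{1,2}(X)$, while the hypothesis ensures $|\nabla u| \in L^\infty_{\mathrm{loc}}(X)$; local elliptic regularity for the smooth weighted Laplacian forces $u|_{\mathcal{R}} \in C^\infty(\mathcal{R})$, so the pointwise Bochner inequality (\ref{prop:besmooth}) applies on $\mathcal{R}$. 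Using zero $2$-capacity I then choose Lipschitz cut-offs $\chi_\epsilon$ with $0 \le \chi_\epsilon \le 1$, $\chi_\epsilon \equiv 0$ on a neighborhood of $\mathcal{S}$, and $\chi_\epsilon \to 1$ in $H^{1,2}$, so that $\|\nabla\chi_\epsilon\|_{L^2}\to 0$; note that zero $2$-capacity also forces $\meas_X(\mathcal{S})=0$.

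The essential preliminary step is to establish $|\mathrm{Hess}_u|\in L^2_{\mathrm{loc}}(X)$ (understood via $\mathcal{R}$). For this I apply Lemma \ref{lem:S} to $u$ on $\mathcal{R}$ with the Lipschitz cut-off $\psi\chi_\epsilon$, which is compactly supported in $\mathcal{R}$ because $\chi_\epsilon$ vanishes near $\mathcal{S}$; here $\psi$ is a fixed Lipschitz bump with compact support in $X$. The extra $\chi_\epsilon$-contribution on the right-hand side is controlled by a constant multiple of $\|\nabla u\|_{L^\infty(\mathrm{supp}\psi)}^2\|\nabla\chi_\epsilon\|_{L^2}^2$, which vanishes as $\epsilon\to 0$, and Fatou on the left-hand side yields the bound. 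Together with Kato's inequality this promotes $|\nabla u|^2$ to $H^{1,2}_{\mathrm{loc}}(X)$. Now fix a test function $\phi\in D(\Delta)\cap L^\infty(X)$ with compact support, $\phi\ge 0$ and $\Delta\phi\in L^\infty(X)$. I multiply the pointwise Bochner by $\phi\chi_\epsilon$, integrate over $\mathcal{R}$, and apply one integration by parts on the smooth manifold $\mathcal{R}$ (legitimate since $\phi\chi_\epsilon$ has compact support there); this moves the Laplacian off $|\nabla u|^2$ at the cost of a residual $-\tfrac12\int\phi\langle\nabla\chi_\epsilon,\nabla|\nabla u|^2\rangle d\meas_X$, which is bounded in absolute value by $\tfrac12\|\phi\|_{L^\infty}\|\nabla\chi_\epsilon\|_{L^2}\bigl\|\nabla|\nabla u|^2\bigr\|_{L^2(\mathrm{supp}\phi)}$ and vanishes as $\epsilon\to 0$ by the Hessian bound. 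The remaining integral converges by dominated convergence, one further IBP (using $\phi\in D(\Delta)$ and $|\nabla u|^2\in H^{1,2}_{\mathrm{loc}}(X)$) rewrites it as $\tfrac12\int_X\Delta\phi|\nabla u|^2 d\meas_X$, and the right-hand side passes to the limit trivially, producing the integrated Bochner inequality for $u$.

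To extend to general $f\in D(\Delta)$ with $\Delta f\in H^{1,2}(X)$, I approximate $f$ by $h_sg_j$ with $g_j\in D$ and $g_j\to f$ in $L^2$, letting first $j\to\infty$ (semigroup continuity) and then $s\to 0$ (using $h_sf\to f$ in $H^{1,2}$ and $\Delta h_sf\to\Delta f$ in $L^2$), working with the IBP-ed weak form of the Bochner inequality, which requires only $H^{2,2}$-type convergence of the approximants. This proves that $X$ is a locally $\BE(K,N)$ space. Under the stronger global $L^\infty$-gradient hypothesis I mimic Theorem \ref{thm:LtoG}: construct cut-offs $\rho_R\in D(\Delta)$ with $\rho_R\to 1$ and $\|\nabla\rho_R\|_{L^\infty}+\|\Delta\rho_R\|_{L^\infty}$ uniformly bounded, produced by applying $h_t$ to Lipschitz cut-offs $\psi_R$ of large balls (the global gradient bound yields the $\nabla\rho_R$ estimate, and the $\Delta\rho_R$ estimate follows from the heat-flow computation as in (\ref{eq:laplacianbound})); then replacing $\phi$ by $\phi\rho_R$ in the locally $\BE$ inequality and sending $R\to\infty$ by dominated convergence gives the global $\BE(K,N)$ condition. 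The main obstacle throughout is the careful $\chi_\epsilon$-residual accounting: since zero $2$-capacity only controls $\|\nabla\chi_\epsilon\|_{L^2}$ and says nothing about $\Delta\chi_\epsilon$, the argument must be arranged so that only first-order derivatives of $\chi_\epsilon$ ever appear, and this is precisely why the $L^2_{\mathrm{loc}}$-Hessian bound from Lemma \ref{lem:S} is the technical heart of the proof.
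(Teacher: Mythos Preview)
Your argument for the first statement (the locally $\BE(K,N)$ conclusion) is correct and matches the paper's proof essentially line by line: reduce to $u=h_sg$ with $g\in D$, use Lemma~\ref{lem:S} with a capacity cut-off to get $|\mathrm{Hess}_u|\in L^2_{\mathrm{loc}}$, deduce $|\nabla u|^2\in H^{1,2}_{\mathrm{loc}}$, then multiply the pointwise Bochner by $\phi\chi_\epsilon$, let $\epsilon\to 0$ (the residual term dies by the Hessian bound), and finally approximate by letting $j\to\infty$, $s\to 0$.

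There is, however, a genuine gap in your treatment of the second statement. You propose to mimic Theorem~\ref{thm:LtoG} by constructing cut-offs $\rho_R\in D(\Delta)$ with $\|\nabla\rho_R\|_{L^\infty}+\|\Delta\rho_R\|_{L^\infty}$ uniformly bounded, via $\rho_R=h_t\psi_R$ for Lipschitz $\psi_R$. But the $L^\infty$ bound on $\Delta h_t\psi_R$ in (\ref{eq:laplacianbound}) relies on the heat-kernel gradient estimate of Theorem~\ref{thm:gradient}, which in turn requires both the PI condition and a QL; neither is assumed in Theorem~\ref{thm:be}. Likewise, the global $L^\infty$ gradient hypothesis is only for $f\in D$, and $\psi_R$ need not lie in $D$, nor is the bound quantitative, so you cannot invoke it to control $|\nabla h_t\psi_R|$. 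Without these cut-offs, the passage from ``locally $\BE$'' to ``globally $\BE$'' via substitution $\phi\mapsto\phi\rho_R$ collapses.

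The paper avoids this obstruction by a different route for the second statement: under the global hypothesis $|\nabla h_tf_j|\in L^\infty(X)$, it first upgrades the Hessian bound to $|\mathrm{Hess}_{h_tf_j}|\in L^2(X)$ globally (by running Lemma~\ref{lem:S} with arbitrary $\rho\in\mathrm{Lip}_c(X)$), hence $|\nabla h_tf_j|^2\in H^{1,2}(X)$ globally. This is the key point, because now one only needs \emph{Lipschitz} cut-offs $\rho$ with $\rho=1$ on large balls and small gradient (which always exist), working throughout with the weak form $-\tfrac12\int\langle\nabla(\rho\phi),\nabla|\nabla h_tf_j|^2\rangle\,\di\meas_X$; the final integration by parts against $\Delta\phi$ uses only that $|\nabla h_tf_j|^2\in H^{1,2}(X)$ and $\phi\in D(\Delta)$. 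No Laplacian bound on the cut-off is ever needed.
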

\begin{proof}
Firstly let us check the first statement about the local BE result.
For any $f \in D(\Delta)$ with $f \in H^{1,2}(X)$, find a sequence of $f_j \in D$ converging to $f$ in $L^2(X)$, 
and then consider $h_tf_j$ for $t \le 1$. 
For any open ball $B$ and any Lipschitz cut-off function $\rho \in \mathrm{Lip}_c(2B)$ with $\rho=1$ on $B$, we can take a sequence of Lipschitz functions $\phi_{i} \in \mathrm{Lip}_c(\mathcal{R})$ as in Definition \ref{def:capzero} with $A=\mathcal{S}$ (for $2B$).

Then Lemma \ref{lem:S}  shows
\begin{align}
    &\frac{1}{2}\int_X(\rho\phi_{i})^2|\mathrm{Hess}_{h_tf_j}|^2\di \meas_X \nonumber \\
    &\le \int_X\left(|\nabla h_tf_j|^2|\nabla (\rho\phi_{i})|^2-(\rho\phi_{i})^2\langle \nabla \Delta h_tf_j, \nabla h_tf_j\rangle - K(\rho\phi_{i})^2|\nabla h_tf_j|^2\right)\di \meas_X.
\end{align}
Thus letting  $i \to \infty$ yields
\begin{equation}
    \int_B|\mathrm{Hess}_{h_tf_j}|^2\di \meas_X<\infty,
\end{equation}
therefore
\begin{equation}\label{eq:localh12}
|\nabla h_tf_j|^2 \in H^{1,2}_{\mathrm{loc}}(X)    
\end{equation}
because $B$ is arbitrary (see also Remark \ref{rem:sobolev}), where we immediately used our assumption; $|\nabla h_tf_j| \in L^{\infty}_{\mathrm{loc}}(X)$.

Fix $\phi \in D_c(\Delta) \cap L^{\infty}(X)$ with $\phi \ge 0$ and $\Delta \phi \in L^{\infty}(X)$, take an open ball $B$ with $\supp \phi \subset B$, and similarly, consider $\phi_i$ as above.
Since (\ref{prop:besmooth}) yields
\begin{align}
    &-\frac{1}{2}\int_X\langle \nabla (\phi \phi_{i}), \nabla |\nabla h_tf_j|^2\rangle \di \meas_X \nonumber \\
    &\ge \int_X\phi \phi_{i}\left(\frac{(\Delta h_tf_j)^2}{N}+\langle \nabla \Delta h_tf_j, \nabla h_tf_j\rangle +K|\nabla h_tf_j|^2\right)\di \meas_X, 
\end{align}
letting $i \to \infty$ with the dominated convergence theorem shows
\begin{align}\label{eq:bochnerheat1}
\frac{1}{2}\int_X\Delta \phi |\nabla h_tf_j|^2\di \meas_X 
    &=-\frac{1}{2}\int_X\langle \nabla \phi, \nabla |\nabla h_tf_j|^2\rangle \di \meas_X, \quad \text{(by (\ref{eq:localh12}))} \nonumber \\
    &=-\lim_{i \to \infty}\frac{1}{2}\int_X\langle \nabla (\phi \phi_{i}), \nabla |\nabla h_tf_j|^2\rangle \di \meas_X \nonumber \\
    &\ge \lim_{i \to \infty}\int_X\phi \phi_{i}\left(\frac{(\Delta h_tf_j)^2}{N}+\langle \nabla \Delta h_tf_j, \nabla h_tf_j\rangle +K|\nabla h_tf_j|^2\right)\di \meas_X \nonumber \\
    &= \int_X\phi \left(\frac{(\Delta h_tf_j)^2}{N}+\langle \nabla \Delta h_tf_j, \nabla h_tf_j\rangle +K|\nabla h_tf_j|^2\right)\di \meas_X.
\end{align}
Under taking the limit $j \to \infty$, let us recall that $h_tf_j \to h_tf$ in $H^{1,2}(X)$ and that $\Delta h_tf_j \to \Delta h_t f$ in $H^{1,2}(X)$ as observed in the proof of \textbf{Step 1} in the proof of Proposition \ref{prop:codi4}. Thus, taking $j \to \infty$ in (\ref{eq:bochnerheat1}) proves
\begin{align}\label{eq:bochnerheat}
\frac{1}{2}\int_X\Delta \phi |\nabla h_tf|^2\di \meas_X \ge \int_X\phi \left(\frac{(\Delta h_tf)^2}{N}+\langle \nabla \Delta h_tf, \nabla h_tf\rangle +K|\nabla h_tf|^2\right)\di \meas_X.
\end{align}
Thus letting $t \to 0$ completes the proof of the first statement.

In order to prove the second statement, under assuming the second heat flow regularity, we remark that the arguments above allow us to conclude that under the same notations,
\begin{equation}
    \frac{1}{2}\int_X\rho^2|\mathrm{Hess}_{h_tf_j}|^2\di \meas_X\le \int_X\left(|\nabla h_tf_j|^2|\nabla \rho|^2-\rho^2\langle \nabla \Delta h_tf_j, \nabla h_tf_j\rangle-K\rho^2|\nabla h_tf_j|^2\right)\di \meas_X
\end{equation}
holds for any $\rho \in \mathrm{Lip}_c(X)$. In particular we have 
\begin{equation}
    |\mathrm{Hess}_{h_tf_j}| \in L^2(X),
\end{equation}
thus, $|\nabla h_tf_j|^2 \in H^{1,2}(X)$. For any $\rho \in \mathrm{Lip}_c(X)$ with $\rho \ge 0$, finding an open ball $B$ with $\supp \rho \subset B$ and considering $\phi_i$ as in Definition \ref{def:capzero}, we have
\begin{equation}
    -\frac{1}{2}\int_X\langle \nabla (\rho \phi_i \phi), \nabla |\nabla h_tf_j|^2\rangle \di \meas_X \ge \int_X\rho \phi_i \phi\left(\frac{(\Delta h_tf_j)^2}{N}+\langle \nabla \Delta h_tf_j, \nabla h_tf_j\rangle +K|\nabla h_tf_j|^2\right)\di \meas_X.
\end{equation}
Letting $i \to \infty$, and taking $\rho$ as $\rho=1$ on a large ball with small gradient (as observed in the proof of Proposition \ref{rem:lapcut}) show
\begin{equation}
    -\frac{1}{2}\int_X\langle \nabla \phi, \nabla |\nabla h_tf_j|^2\rangle \di \meas_X \ge \int_X \phi\left(\frac{(\Delta h_tf_j)^2}{N}+\langle \nabla \Delta h_tf_j, \nabla h_tf_j\rangle +K|\nabla h_tf_j|^2\right)\di \meas_X.
\end{equation}
Since the left-hand-side is equal to (by a fact that $|\nabla h_tf_j|^2 \in H^{1,2}(X)$)
\begin{equation}
    \frac{1}{2}\int_X\Delta \phi |\nabla h_tf_j|^2\di \meas_X,
\end{equation}
letting $j \to \infty$ and then letting $t \to 0$ complete the proof of the second statement.
\end{proof}
\begin{remark}\label{rem:linfty}
    Of course, $D$ as in the theorem above can be taken as $L^{\infty} \cap L^2(X)$ because for any $f \in L^2(X)$, considering the truncation
\begin{equation}
    f^L:=\max\{\min\{f, L\},-L\} \in L^2 \cap L^{\infty}(X),
\end{equation}
then we know $f^L \to f$ in $L^2(X)$ as $L \to \infty$. This choice of $D$ plays a role later. 
\end{remark}
\begin{corollary}\label{cor:rcdc}
    Let $X$ be an $n$-dimensional almost smooth metric measure space with the Sobolev-to-Lipschitz property and the volume growth condition (\ref{eq:volumegrowth}). Then for any $K \in \mathbb{R}$ and any $N \in [n, \infty]$, the following two conditions are equivalent:
    \begin{enumerate}
        \item $X$ is an $\RCD(K, N)$ space;
        \item We have 
        \begin{equation}
            \mathrm{Ric}_{N, X}\ge K, \quad \text{on $\mathcal{R}$}
        \end{equation}
        with the following regularity property for the heat flow for a dense subset $D$ of $L^2(X)$:
        \begin{itemize}
            \item if $f \in D$, then $|\nabla h_tf| \in L^{\infty}(X)$ for any $0<t \le 1$.
        \end{itemize}
    \end{enumerate}
\end{corollary}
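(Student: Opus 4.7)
This corollary is a repackaging of Theorem \ref{thm:be} together with standard regularity properties of $\RCD$ spaces, so I would split the argument cleanly by direction.

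For $(2)\Rightarrow (1)$, my first step is to verify that $X$ is infinitesimally Hilbertian, so that BE even makes sense. The zero $2$-capacity of $\mathcal{S}$ forces $\meas_X(\mathcal{S})=0$: if $\meas_X(\mathcal{S}\cap B)>0$ for some open ball $B$, then, taking $\phi_i\to 1$ in $H^{1,2}(B)$ with $\phi_i=0$ near $\mathcal{S}\cap B$ as provided by Definition \ref{def:capzero}, a subsequence converges $\meas_X$-a.e., but its pointwise limit is $0$ on $\mathcal{S}\cap B$, contradicting $\phi_i\to 1$. Since $(X,\dist_X)$ is separable, $\mathcal{R}$ is covered by countably many smooth neighborhoods on each of which $H^{1,2}$ is a Hilbert space, so Remark \ref{rem:grad} yields IH. Properness is already built into the standing assumption of Section \ref{sec:as}, so the second half of Theorem \ref{thm:be} applies (its two hypotheses are exactly the content of (2)) and shows that $X$ is a $\BE(K, N)$ space. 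Combined with the assumed Sobolev-to-Lipschitz property and volume growth (\ref{eq:volumegrowth}), this realizes the three defining conditions of $\RCD(K, N)$.

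For $(1)\Rightarrow (2)$, the Bakry-\'Emery curvature bound on the smooth part is essentially free: since $X$ is $\RCD(K, N)$, the smooth open subset $\mathcal{R}$ is a locally $\RCD(K, N)$ space in the sense of Definition \ref{def:lrcd} (take the trivial chart $V=W\subset X$), and Proposition \ref{prop:almostsmoothrcd} then delivers $\mathrm{Ric}_{N, X}\ge K$ on $\mathcal{R}$. For the heat flow regularity I would take $D:=L^{\infty}\cap L^2(X)$, which is dense in $L^2(X)$ by truncation (cf.\ Remark \ref{rem:linfty}). When $N<\infty$, $X$ is a PI geodesic space (Theorem \ref{thm:lpircd}) admitting a QL (Theorem \ref{thm:qLCh}), so Corollary \ref{cor:lipreg} immediately gives $|\nabla h_tf|\in L^{\infty}(X)$ for all $f\in D$ and $0<t\le 1$. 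In the $N=\infty$ case QL is not available in general; I would instead take $D:=\Lipb(X)\cap L^2(X)$ and use the Bakry-\'Emery gradient estimate $|\nabla h_tf|\le e^{-Kt}h_t|\nabla f|\le e^{-Kt}\|\nabla f\|_{L^{\infty}}$, as already invoked in Proposition \ref{rem:lapcut}.

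The only genuinely non-trivial step is the initial IH verification from the zero $2$-capacity hypothesis; once that is in place, both directions reduce to a direct invocation of the machinery set up in Sections \ref{sec:2}--\ref{sec:as} and of Theorem \ref{thm:be}, so I do not foresee any substantive obstacle beyond bookkeeping.
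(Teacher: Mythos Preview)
Your proposal is correct and follows essentially the same route as the paper: invoke Theorem \ref{thm:be} for $(2)\Rightarrow(1)$ and read off the Ricci bound plus heat-flow regularity from the RCD theory for $(1)\Rightarrow(2)$. Your write-up is in fact more careful than the paper's in two respects---you make the IH verification (via $\meas_X(\mathcal{S})=0$ and Remark \ref{rem:grad}) explicit, and you separate the $N=\infty$ case, where PI and QL are unavailable, handling it with the Bakry--\'Emery gradient estimate on $D=\Lipb\cap L^2$; the paper handles both cases by a blanket reference to \cite[Theorem 3.1]{AmbrosioMondinoSavare2} and to Corollary \ref{cor:lipreg}.
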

\begin{proof}
We already proved the implication from (2) to (1) by Theorem \ref{thm:be}. The other one is an easy consequence of the RCD theory, see, for instance, \cite[Theorem 3.1]{AmbrosioMondinoSavare2}, in fact the regularity of the heat flow is valid as $D=L^{\infty} \cap L^2(X)$ (recall Corollary \ref{cor:lipreg}). See also Remark \ref{rem:converse}.  
\end{proof}
The following gives a generalization of a main result \cite[Theorem 3.7]{Honda3} to the weighted framework including the non-compact case.
\begin{corollary}\label{thm:almostsmoothcompact}
    Let $X$ be an $n$-dimensional almost smooth metric measure space. Assume that 
    \begin{equation}
\mathrm{Ric}_{N, X}\ge K,\quad \text{on $\mathcal{R}$}
\end{equation}
holds for some $N \ge n$ and some $K \in \mathbb{R}$, that the canonical inclusion of $H^{1,2}(X)$ into $L^2(X)$ is a compact operator and that any eigenfunction $f$ of $-\Delta$ on $X$ satisfies $|\nabla f| \in L^{\infty}(X)$. Then $X$ is a $\BE(K, N)$ space.
\end{corollary}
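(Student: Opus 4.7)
The plan is to apply the second half of Theorem \ref{thm:be} with a carefully chosen dense subset $D \subset L^2(X)$, namely the algebraic span of eigenfunctions of $-\Delta$. The three conditions to verify will be: (i) that $D$ is dense in $L^2(X)$, which will follow from the compact embedding hypothesis via the spectral theorem; (ii) that $h_t$ acts simply on elements of $D$; and (iii) that the assumed uniform gradient bound on eigenfunctions then transfers to finite linear combinations and hence to $|\nabla h_t f| \in L^\infty(X)$ for every $f \in D$ and every $t \in (0,1]$.

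More precisely, since $H^{1,2}(X) \hookrightarrow L^2(X)$ is compact and $-\Delta$ is the non-negative self-adjoint operator associated with the Dirichlet form $\Ch$, its resolvent $(\mathrm{Id}-\Delta)^{-1}$ is a compact self-adjoint operator on $L^2(X)$. The spectral theorem thus provides a countable orthonormal basis $\{\phi_i\}_{i \in \mathbb{N}}$ of $L^2(X)$ with $-\Delta \phi_i = \lambda_i \phi_i$ and $0 \le \lambda_1 \le \lambda_2 \le \cdots \to \infty$. Setting
\begin{equation}
D := \mathrm{span}_{\mathbb{R}}\{\phi_i : i \in \mathbb{N}\},
\end{equation}
this subspace is dense in $L^2(X)$ by construction. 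For any $f = \sum_{i=1}^m c_i \phi_i \in D$ the heat flow formula gives $h_t f = \sum_{i=1}^m c_i e^{-\lambda_i t} \phi_i$, hence by our assumption $|\nabla \phi_i| \in L^\infty(X)$,
\begin{equation}
\|\nabla h_t f\|_{L^\infty(X)} \le \sum_{i=1}^m |c_i| e^{-\lambda_i t} \|\nabla \phi_i\|_{L^\infty(X)} < \infty
\end{equation}
for every $t > 0$, so in particular $|\nabla h_t f| \in L^\infty(X)$ on $(0,1]$.

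With $D$ dense in $L^2(X)$ and the required heat-flow regularity verified, the second statement of Theorem \ref{thm:be} applies (properness of $X$ is built into the standing assumptions on almost smooth spaces) and concludes that $X$ is a $\BE(K,N)$ space. I do not expect a serious obstacle in this argument: the compactness assumption is precisely what is needed to promote the pointwise eigenfunction bound to a global heat-flow Lipschitz estimate on a dense class, and once that is in hand the machinery of Theorem \ref{thm:be} takes over.
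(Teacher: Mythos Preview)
Your argument is correct and is essentially the same as the paper's: choose $D$ to be the linear span of the eigenfunctions of $-\Delta$, use the compact embedding to obtain the spectral decomposition (hence density of $D$ in $L^2(X)$), observe $h_t\phi_i = e^{-\lambda_i t}\phi_i$ so that $|\nabla h_t f|\in L^\infty(X)$ for every $f\in D$, and then invoke the second statement of Theorem~\ref{thm:be}. The paper records this in one line; your version simply spells out the spectral-theoretic details.
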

\begin{proof}
Taking $D$ as the set of all linear combinations of eigenfunctions, apply Theorem \ref{thm:be} because $h_t\phi=e^{-\lambda t}\phi$ holds if $\phi$ is an eigenfunction of $-\Delta$ with the eigenvalue $\lambda$.
\end{proof}

Let us provide two different proofs of the following result because both of them have independent interests (though the later one is very short). The first one is an \textit{elliptic} way in the sense that we use eigenfunctions. The other one is a \textit{parabolic} way in the sense that we use the heat kernel. Note that Theorem \ref{thm:maincod2} is a direct consequence of the following together with Theorems \ref{thm:lpircd} and \ref{thm:qLCh}.
\begin{theorem}\label{thm:almosttoRCD}
Let $X$ be an $n$-dimensional almost smooth PI geodesic space with SL and a QL. If 
\begin{equation}
\mathrm{Ric}_{N, X}\ge K,\quad \text{on $\mathcal{R}$}
\end{equation}
for some $N \ge n$ and some $K \in \mathbb{R}$, then $X$ is an $\RCD(K, N)$ space.
\end{theorem}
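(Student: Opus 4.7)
The plan is to verify the two hypotheses of Theorem \ref{thm:be} with the dense subset $D := L^{\infty}\cap L^{2}(X)$ to conclude that $X$ is a $\BE(K,N)$ space, and then to upgrade this to $\RCD(K,N)$ by checking the two remaining axioms directly from the PI hypothesis. All the genuine analytic work has already been absorbed into Corollary \ref{cor:lipreg} (which is where the QL is used decisively) and Theorem \ref{thm:be} (which is where the Ricci bound on $\mathcal{R}$ is used).

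First I would show that $X$ is infinitesimally Hilbertian. By separability of $X$, the smooth part $\mathcal{R}$ admits a countable open cover $\{V_i\}_i$ such that each $V_i$ is isometric as a metric measure space to an open subset of a smooth weighted Riemannian manifold, so each $H^{1,2}(V_i)$ is a Hilbert space. The zero $2$-capacity of $\mathcal{S}$ forces $\meas_X(\mathcal{S})=0$: on any open ball $B$ one has functions $0\le \phi_i \le 1$ with $\phi_i\equiv 0$ on a neighborhood of $\mathcal{S}\cap B$ and $\phi_i\to 1$ in $L^2(B)$, so along a subsequence $\phi_i\to 1$ $\meas_X$-a.e.\ on $B$, which is incompatible with $\phi_i\equiv 0$ on $\mathcal{S}\cap B$ unless $\meas_X(\mathcal{S}\cap B)=0$; separability then gives $\meas_X(\mathcal{S})=0$. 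Remark \ref{rem:grad} now yields IH.

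With IH established, since $X$ is a PI geodesic space with a QL, Corollary \ref{cor:lipreg} supplies $|\nabla h_t f|\in L^{\infty}(X)$ for every $f\in D=L^{\infty}\cap L^{2}(X)$ and every $t\in(0,1]$. The space $X$ is proper by Proposition \ref{prop:poincaresobolev}(1) and is almost smooth with $\mathrm{Ric}_{N,X}\ge K$ on $\mathcal{R}$ by assumption. Thus the global (second) statement of Theorem \ref{thm:be} applies with this $D$, and $X$ is a $\BE(K,N)$ space.

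To pass from $\BE(K,N)$ to $\RCD(K,N)$, the two remaining axioms are immediate: the exponential volume growth \eqref{eq:expgrowth} from Proposition \ref{prop:poincaresobolev}(2) yields the volume growth condition \eqref{eq:volumegrowth}, and Proposition \ref{prop:PIStoL} gives the Sobolev-to-Lipschitz property. Therefore $X$ is an $\RCD(K,N)$ space. The only step where one might anticipate a subtlety is the IH verification, but it reduces to the elementary observation that zero $2$-capacity implies zero measure; the true technical core of the theorem, namely promoting the smooth pointwise Bochner inequality on $\mathcal{R}$ to a global distributional one, is entirely handled by Theorem \ref{thm:be}, whose applicability is secured by the QL via Corollary \ref{cor:lipreg}.
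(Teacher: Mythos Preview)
Your proposal is correct and coincides with the paper's second (``parabolic'') proof, which the authors state in one line as a direct consequence of Proposition~\ref{prop:PIStoL}, \eqref{eq:expgrowth}, Corollary~\ref{cor:lipreg} and Theorem~\ref{thm:be}; you have simply unpacked that line and added the (straightforward) verification of infinitesimal Hilbertianity. The paper also offers an alternative first (``elliptic'') proof via a Dirichlet eigenfunction decomposition on balls that establishes the local $\BE(K,N)$ property directly and then invokes Theorem~\ref{thm:LtoG}, but your route is the shorter of the two and matches the authors' preferred argument.
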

\begin{proof}[The first (elliptic) proof]
Thanks to Theorem \ref{thm:LtoG}, it is enough to prove that $X$ is a locally $\BE(K, N)$ space. Moreover since Corollary \ref{thm:almostsmoothcompact} proves the desired result in the case when $X$ is compact, let us focus only on the case when $X$ is non-compact.
Then take:
\begin{enumerate}
    \item $\phi \in D_c(\Delta) \cap L^{\infty}(X)$ with $\phi \ge 0$ and $\Delta \phi \in L^{\infty}(X)$ (notice $\phi \in \mathrm{Lip}_c(X)$ because of Theorem \ref{thm:Poisson});
    \item $f \in D(\Delta)$ with $\Delta f \in H^{1,2}(X)$;
    \item an open ball $B$ with $\supp \phi \subset B$;
    \item a good cut-off $\rho \in D(\Delta)$ with $\rho=1$ on a neighborhood of $\supp \phi$, and $\supp \rho \subset B$ (note $\rho f \in D(\Delta)$);
    \item $\phi_i$ as in Definition \ref{def:capzero} with $A=\mathcal{S}$ for $B$.
\end{enumerate} 
Let us consider the spectral decomposition (recall (\ref{eq:sobolevpoincare}), where the non-compactness of $X$ plays a role here):
\begin{equation}\label{eq:spectral}
    \rho f=\sum_ia_i\psi_i, \quad \text{in $H^{1,2}_0(B)$}
\end{equation}
and 
\begin{equation}\label{eq:spectral2}
    \Delta (\rho f)=-\sum_ia_i\lambda_i\psi_i, \quad \text{in $L^{2}(B)$,}
\end{equation}
where 
\begin{equation}
    a_i:=\int_B\rho f \psi_i\di \meas_X
\end{equation}
and $\psi_i \in H^{1,2}_0(B) \cap D(\Delta, B)$ is the $i$-th eigenfunction of the $i$-th eigenvalue $\lambda_i$ of the (minus) Laplacian counted with multiplicities, associated with the Dirichlet boundary value condition on $B$, verifying $\int_B\psi_i\psi_j\di \meas_X=\delta_{ij}$ (see, for example, \cite[Appendix]{Honda3} for proofs of (\ref{eq:spectral}) and of (\ref{eq:spectral2}), though the appendix deals with the compact case, however the arguments work even in this setting). Let
\begin{equation}
    f_k:=\sum_i^ka_i\psi_i
\end{equation}
and note that $f_k$ is smooth on the regular part because of the elliptic regularity theorem.
Then Lemma \ref{lem:S} proves for any $\psi \in \mathrm{Lip}_c(B)$
\begin{align}
&\frac{1}{2}\int_B(\psi\phi_{i})^2|\mathrm{Hess}_{f_k}|^2\di \meas_X\nonumber \\
&\le \int_B\left(2|\nabla f_k|^2|\nabla (\psi\phi_{i})|^2-(\psi\phi_{i})^2\langle \nabla \Delta f_k, \nabla f_k\rangle -K(\psi\phi_{i})^2|\nabla f_k|^2\right)\di \meas_X.
\end{align}
Thus letting $i \to \infty$ shows by the arbitrariness of $\psi$
\begin{align}\label{eq:hess}
|\mathrm{Hess}_{f_k}|\in L^2_{\mathrm{loc}}(B),
\end{align}
in particular $|\nabla f_k|^2 \in H^{1,2}_{\mathrm{loc}}(B)$ because Theorem \ref{thm:Poisson} shows $|\nabla f_k| \in L^{\infty}_{\mathrm{loc}}(B)$. 

On the other hand, since  (\ref{prop:besmooth}) yields
\begin{align}\label{eq:localbochner}
    -\frac{1}{2}\int_B\langle \nabla (\phi \phi_i), \nabla |\nabla f_k|^2\rangle \di \meas_X \ge \int_B\phi \phi_i\left(\frac{(\Delta f_k)^2}{N} +\langle \nabla \Delta f_k, \nabla f_k\rangle+K|\nabla f_k|^2\right) \di \meas_X,
\end{align}
the left-hand-side of (\ref{eq:localbochner}) can be calculated as 
\begin{align}
    (\mathrm{LHS}) &\stackrel{i\to \infty}{\to}-\frac{1}{2}\int_B\langle \nabla \phi, \nabla |\nabla f_k|^2\rangle \di \meas_X \nonumber \\
    &=\frac{1}{2}\int_B\Delta \phi |\nabla f_k|^2\di \meas_X, \quad \text{(by $|\nabla f_k|^2 \in H^{1,2}_{\mathrm{loc}}(B)$ and $\supp \phi \subset  B$)} \nonumber \\
    &\stackrel{k\to \infty}{\to} \frac{1}{2}\int_B\Delta \phi |\nabla (\rho f)|^2\di \meas_X \nonumber \\
    &=\frac{1}{2}\int_B\Delta \phi |\nabla f|^2\di \meas_X,\quad \text{(by (\ref{eq:localitygrad})).}
\end{align}
Finally since the right-hand-side of (\ref{eq:localbochner}) can be easily calculated as
\begin{align}
    &(\mathrm{RHS})\nonumber \\
    & =\int_B\left(\phi \phi_i\frac{(\Delta f_k)^2}{N} - (\Delta f_k)^2\phi\phi_i - \Delta f_k\langle \nabla (\phi \phi_i), \nabla f_k\rangle +K\phi\phi_i|\nabla f_k|^2\right) \di \meas_X \nonumber \\
    &\stackrel{i \to \infty}{\to} \int_B\left(\phi \frac{(\Delta f_k)^2}{N} - (\Delta f_k)^2\phi - \Delta f_k\langle \nabla \phi, \nabla f_k\rangle +K\phi|\nabla f_k|^2\right) \di \meas_X \nonumber \\
    &\stackrel{k\to \infty}{\to}\int_B\left(\phi \frac{(\Delta (\rho f))^2}{N} - (\Delta (\rho f))^2\phi - \Delta (\rho f)\langle \nabla \phi, \nabla (\rho f)\rangle +K\phi|\nabla (\rho f)|^2\right) \di \meas_X \nonumber \\
    &=\int_B\left(\phi \frac{(\Delta f)^2}{N} - (\Delta f)^2\phi - \Delta f\langle \nabla \phi, \nabla f\rangle +K\phi|\nabla f|^2\right) \di \meas_X, \quad \text{(by (\ref{eq:localitygrad}) and (\ref{eq:localitylap})),} \nonumber \\
    &=\int_B\phi \left(\frac{(\Delta f)^2}{N} +\langle \nabla \Delta f, \nabla f\rangle +K|\nabla f|^2\right) \di \meas_X,
\end{align}
we conclude that $X$ is a locally $\BE(K, N)$ space. 
\end{proof}
\begin{proof}[The second parabolic proof]
This is a direct consequence of 
(\ref{eq:expgrowth}), Corollary \ref{cor:lipreg} and Theorem \ref{thm:be}.
\end{proof}
\begin{remark}
    In the above setting, the corresponding local result is also valid, namely; if an open subset $U$ of $X$ satisfies 
\begin{equation}
\mathrm{Ric}_{N, X}\ge K,\quad \text{on $U \cap \mathcal{R}$}
\end{equation}
for some $N \ge n$ and some $K \in \mathbb{R}$, then $U$ is a locally $\BE(K, N)$ space. It is worth pointing out that by the first proof above, this local result is also valid even if we replace PI with a QL by local ones which will be proposed later, Question \ref{ques:1}. Moreover if the singular set in $\bar U$ has at least codimension $4$, the local QL assumption can be dropped by the same arguments as in the theorem below, where the conclusion is still the same; $U$ is a locally $\BE(K, N)$ space. 
\end{remark}

In the rest of this section, we also provide two proofs of the following, as a special, but a typical version of Theorem \ref{thm:mainsmooth}. 
For the most general formulation, see Section \ref{sec:almostrcd}.

\begin{theorem}\label{thm:codimension4}
    Let $X$ be an $n$-dimensional almost smooth non-collapsed 
    metric measure space. Assume that the singular set $\mathcal{S}$ has at least codimension $4$ (recall Definition \ref{def:codimension})
     and that $X$ is a PI geodesic space with the SL and 
        \begin{equation}
        \mathrm{Ric}_{n, X} \ge K,\quad \text{on $\mathcal{R}$.}
    \end{equation}
    Then $X$ is a non-collapsed $\RCD(K, n)$ space.
\end{theorem}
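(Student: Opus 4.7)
The plan is to reduce the theorem to Theorem \ref{thm:almosttoRCD}: once $X$ is known to satisfy a QL, that theorem produces an $\RCD(K,n)$ structure, and the non-collapsed conclusion follows from the hypothesis $\meas_X=\haus^n$. Thus the whole task is to extract a QL from the combination of PI, the codimension-$4$ assumption (\ref{eq:codi4ass}), and the smooth Ricci bound on $\mathcal{R}$.

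Fix a harmonic function $h$ on an open ball $B$ of radius $r\le 1$. The standard parabolic Harnack inequality on PI spaces (used already to produce (\ref{eqn:heat kernel bounds})) gives a quantitative H\"older oscillation bound $|h(x)-h(y)|\le C r^{-\gamma}\,\dist_X(x,y)^{\gamma}\intav_B|h|\,\di\meas_X$ on $\tfrac12 B$, with $\gamma\in(0,1)$ depending only on the PI data. On the smooth part, the Cheng--Yau pointwise gradient estimate for harmonic functions on a Riemannian manifold with $\mathrm{Ric}_n\ge K$ gives $|\nabla h|(x)\le C\bigl(\sup_{\tilde B}h-\inf_{\tilde B}h\bigr)/\dist_X(x,\mathcal{S}\cup\partial B)$ for each $x\in\mathcal{R}\cap\tfrac12 B$ and a small ball $\tilde B$ around $x$ contained in $\mathcal{R}$. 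Combining the two estimates produces the pointwise bound
\[
|\nabla h|(x)\le \frac{C}{\dist_X(x,\mathcal{S})^{1-\gamma}},\qquad x\in\mathcal{R}\cap\tfrac12 B,
\]
exactly as in (\ref{eq:hn}).

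Next I would obtain $|\nabla h|^2\in H^{1,2}_{\mathrm{loc}}(B)$ across $\mathcal{S}$. Using the codimension-$4$ hypothesis and the good cut-off construction in the proof of Proposition \ref{prop:codi4}, build a family $\psi_\epsilon\in D(\Delta)$ with $0\le\psi_\epsilon\le 1$, $\psi_\epsilon=0$ on $B_\epsilon(\mathcal S)$, $\psi_\epsilon=1$ outside $B_{2\epsilon}(\mathcal S)$, and $\epsilon|\nabla\psi_\epsilon|+\epsilon^2|\Delta\psi_\epsilon|\le C$. Fix also $\rho\in\mathrm{Lip}_c(B)$. Since $\mathcal{R}$ is open and smooth, Lemma \ref{lem:S} applied on $\mathcal{R}$ with $\phi=\psi_\epsilon\rho$ (and with $\Delta h=0$ killing the drift term) gives
\[
\int_{\mathcal R}\psi_\epsilon^2\rho^2|\mathrm{Hess}_h|^2\,\di\meas_X\le C\int_{\mathcal R}\bigl(|\nabla h|^2|\nabla(\psi_\epsilon\rho)|^2+|K|\,\psi_\epsilon^2\rho^2|\nabla h|^2\bigr)\,\di\meas_X.
\]
The dangerous contribution of $|\nabla\psi_\epsilon|^2$ is bounded by $C\epsilon^{-2}\int_{B_{2\epsilon}(\mathcal S)\cap\mathrm{supp}\,\rho}\dist_X(\cdot,\mathcal S)^{2\gamma-2}\di\meas_X$, which is uniformly bounded in $\epsilon$ by (\ref{eq:codi4ass}) and a standard layer-cake argument (since $\gamma>0$). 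Letting $\epsilon\to 0$ yields $|\mathrm{Hess}_h|\in L^2_{\mathrm{loc}}(B)$, and hence $|\nabla h|^2\in H^{1,2}_{\mathrm{loc}}(B)$.

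Finally I would promote the smooth Bochner inequality $\tfrac12\Delta|\nabla h|^2\ge K|\nabla h|^2$ on $\mathcal R$ to a distributional one on all of $B$. For any non-negative $\phi\in\mathrm{Lip}_c(B)$ test against $\psi_\epsilon^2\phi$; integration by parts on $\mathcal R$, the $H^{1,2}_{\mathrm{loc}}$-regularity just obtained, and the control $\|\Delta\psi_\epsilon\|_{L^2}^2\le C\epsilon^{-4}\meas_X(B_{2\epsilon}(\mathcal S))\le C$ from (\ref{eq:codi4ass}) let one pass $\epsilon\to 0$ and obtain
\[
-\int_B\langle\nabla\phi,\nabla|\nabla h|^2\rangle\,\di\meas_X\ge 2K\int_B\phi|\nabla h|^2\,\di\meas_X,
\]
so $|\nabla h|^2$ is weakly subharmonic for $\Delta-2K$ on $B$. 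The pointwise bound above combined with (\ref{eq:codi4ass}) places $|\nabla h|^2$ in $L^q_{\mathrm{loc}}(B)$ for some $q>1$, so the De Giorgi--Nash--Moser iteration available on PI spaces upgrades this to a local $L^\infty$-bound, which after one more use of PI and of the $L^2$--mean-value inequality for harmonic functions becomes $\sup_{\frac14 B}|\nabla h|\le (C/r)\intav_B|h|\di\meas_X$ with quantitative $C$. This is exactly QL, and Theorem \ref{thm:almosttoRCD} closes the proof. The hardest step, and the one where codimension $4$ is used sharply, is the $H^{1,2}_{\mathrm{loc}}$-regularity across $\mathcal S$: one must absorb both the $\epsilon^{-2}$ from $|\nabla\psi_\epsilon|^2$ and the $\epsilon^{-4}$ from $|\Delta\psi_\epsilon|^2$ against $\meas_X(B_{2\epsilon}(\mathcal S))$, with the gradient control (\ref{eq:hn}) contributing an additional singular weight that must be tracked carefully.
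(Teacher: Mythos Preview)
Your proposal follows the paper's first (elliptic) proof closely and is correct in its overall architecture: pointwise gradient bound via H\"older continuity plus Cheng--Yau, then $|\mathrm{Hess}_h|\in L^2_{\mathrm{loc}}$ via Lemma~\ref{lem:S}, then weak subharmonicity of $|\nabla h|^2$ across $\mathcal S$, then Moser iteration to produce QL, then Theorem~\ref{thm:almosttoRCD}.

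There is, however, a circularity you should repair. You build $\psi_\epsilon\in D(\Delta)$ by invoking ``the good cut-off construction in the proof of Proposition~\ref{prop:codi4}'', but that construction relies on Theorem~\ref{thm:cutoff}, which already \emph{assumes} a QL---exactly what you are trying to prove. Fortunately none of your actual estimates needs $\Delta\psi_\epsilon$: Lemma~\ref{lem:S} only asks that $\phi$ be Lipschitz with compact support, and the passage from the smooth Bochner inequality on $\mathcal R$ to the weak inequality on $B$ needs only that $\psi_\epsilon\phi\to\phi$ in $H^{1,2}$, i.e.\ zero $2$-capacity of $\mathcal S$ (Proposition~\ref{prop:capzero}). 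So replace your $\psi_\epsilon$ by the elementary Lipschitz cut-offs $1-f_\epsilon$ from the proof of Proposition~\ref{prop:capzero} and drop all mention of $\|\Delta\psi_\epsilon\|_{L^2}$. The paper does exactly this: it first shows $|\nabla h|\in L^4(\tfrac12 B)$ by the dyadic layer-cake sum (your pointwise bound plus codimension $4$), then uses $|\nabla f_\epsilon|\in L^4$ uniformly (again codimension $4$) to control the right-hand side of Lemma~\ref{lem:S}, and finally extends the weak Bochner inequality using only zero $2$-capacity. Accordingly, your closing remark that codimension $4$ is needed ``to absorb $\epsilon^{-4}$ from $|\Delta\psi_\epsilon|^2$'' misidentifies the crux: the sharp use of codimension $4$ is in controlling $\int|\nabla h|^2|\nabla\psi_\epsilon|^2$ (equivalently, in getting $|\nabla h|\in L^4$), not in any Laplacian bound on the cut-off.
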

In order to introduce the first proof, let us prepare the following (see also \cite[Theorem 2.6 and Lemma 2.7]{Dai}):
\begin{lemma}\label{lem:holder}
    Let $X$ be an $n$-dimensional almost smooth non-collapsed PI geodesic space with 
    \begin{equation}
        \mathrm{Ric}_{n, X} \ge K,\quad \text{on $\mathcal{R}$.}
    \end{equation}
    Then for any ball $B$ of radius $r \in (0, 1]$ with $2B \subset \mathcal{R}$ and any $f \in D(\Delta, 2B)$ with $|\Delta f| \le c$ on $2B$, we have
    \begin{equation}
        |\nabla f| \le C\left(\frac{1}{r}\intav_B|f|\di \meas_X +c\right),\quad \text{on $B$,}
    \end{equation}
    where $C>0$ depends only on $n, K$ and $c$.
\end{lemma}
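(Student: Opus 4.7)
The plan is to work on the smooth chart $2B\subset\mathcal{R}$. Since $X$ is non-collapsed, $(2B, \dist_X, \meas_X)$ is isometric as a metric measure space to an open subset of a smooth Riemannian $n$-manifold with $\mathrm{Ric}\ge K$; elliptic regularity applied to $\Delta f\in L^\infty(2B)$ places $f$ in $C^{1,\alpha}\cap W^{2,p}_{\loc}(2B)$ for every $p<\infty$, so the pointwise Bochner inequality
\[
\tfrac{1}{2}\Delta|\nabla f|^2 \ge K|\nabla f|^2 + \langle \nabla f, \nabla \Delta f\rangle
\]
will be available on $2B$. The estimate will then be extracted from this, together with the PI and volume-doubling hypotheses inherited by $2B$ from $X$ (whose constants depend only on $n,K$ via the Bishop--Gromov inequality on the smooth part).

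The argument will proceed through three Moser-type iterations. First, I would test Bochner against $\phi^2$ with $\phi\in\mathrm{Lip}_c(2B)$, integrate by parts on $\langle\nabla f,\nabla\Delta f\rangle$ and use $|\Delta f|\le c$ to obtain a Caccioppoli-type energy estimate for $u:=|\nabla f|^2$; combined with the Poincar\'e--Sobolev inequality of Proposition \ref{prop:poincaresobolev}(4), Moser iteration then yields
\[
\sup_B |\nabla f|^2 \le C\!\left(\intav_{(3/2)B}|\nabla f|^2\di\meas_X + c^2\right).
\]
Second, a Caccioppoli estimate for $f$ itself, obtained by testing $\Delta f\cdot f$ against $\zeta^2$ with $\zeta\equiv 1$ on $(3/2)B$, $\supp \zeta\subset (7/4)B$, $|\nabla\zeta|\le C/r$, will give
\[
\int_{(3/2)B}|\nabla f|^2\di\meas_X \le \frac{C}{r^2}\int_{(7/4)B}f^2\di\meas_X + Cc\int_{(7/4)B}|f|\di\meas_X.
\]
Third, using $\Delta|f|\ge -c$ weakly on $2B$ and applying Moser iteration to $v:=|f|+cr^2$ (which satisfies $\Delta v \ge -v/r^2$ since $r\le 1$) should yield
\[
\sup_{(7/4)B}|f| \le C\!\left(\intav_B|f|\di\meas_X + cr^2\right).
\]
Chaining the three displays, using $r\le 1$ together with volume doubling to convert between averages on comparable balls, will deliver the claimed $\sup_B|\nabla f|\le C(r^{-1}\intav_B|f|\di\meas_X + c)$ with $C=C(n,K,c)$.

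The hard part will be the third step: Moser's scheme naturally produces "$\sup$ on a smaller ball $\le C \intav$ on a larger ball", whereas here the sup must be taken on the outer shell $(7/4)B$ while the average sits on the inner ball $B$. I would circumvent this by iterating on a nested chain of concentric balls shrinking from $(7/4)B$ inward toward $B$ (exploiting that the centre of $B$ is interior to $2B$) and using that the PI and doubling constants on $2B$ depend only on $n$ and $K$, so that the iteration can be terminated with the average on $B$ and a controlled overall constant.
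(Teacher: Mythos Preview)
Your step (c) has a genuine flaw. You need $\sup_{(7/4)B}|f|\le C(\intav_B|f|+cr^2)$, i.e.\ the supremum on an \emph{outer} ball controlled by the average on an \emph{inner} one, but Moser iteration for subsolutions only runs the other way. Your proposed fix --- a nested chain of balls shrinking from $(7/4)B$ toward $B$ --- does not reverse this: shrinking inward still produces ``$\sup$ on inner $\le C\,\intav$ on outer'' at each step. There is no Harnack inequality available for $|f|$ (it is only a subsolution, not a solution) and no reverse-H\"older mechanism; for a generic subsolution the inequality you want is simply false. The natural Moser output is $\sup_{(7/4)B}|f|\le C(\intav_{2B}|f|+cr^2)$, and chaining your (a), (b), (c) in this form yields $\sup_B|\nabla f|\le C\bigl(\frac{1}{r}\intav_{2B}|f|+c\bigr)$, which is exactly what the paper's own route delivers (applying Theorem~\ref{thm:Poisson} on $2B$ gives the estimate on $\frac{1}{2}(2B)=B$ with the average over $2B$) and all that is used downstream in Corollary~\ref{cor:holder}.

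For comparison, the paper's proof is a two-line citation: on the smooth chart $2B\subset\mathcal{R}$ with $\mathrm{Ric}\ge K$, Cheng--Yau's gradient estimate \cite{CY} supplies a local QL with constant depending only on $n,K$ (cf.\ Remark~\ref{rem:positive}), and then Theorem~\ref{thm:Poisson} --- the \cite{JKY} Lipschitz estimate for Poisson equations under PI plus QL --- delivers the gradient bound directly. Your hands-on Moser scheme is a legitimate alternative that avoids the \cite{JKY} black box, but besides the issue above it also needs more care in step~(a): the term $\langle\nabla f,\nabla\Delta f\rangle$ is not pointwise defined when $\Delta f$ is merely $L^\infty$, so Bochner must be used in its weak form, rewriting $\langle\nabla f,\nabla g\rangle=\mathrm{div}(g\nabla f)-g^2$ and carrying the resulting first-order divergence term through the iteration.
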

\begin{proof} This is a direct consequence of Theorem \ref{thm:Poisson} (see \cite[Theorem 3.1 or Proposition 3.2]{JKY}) together with Cheng-Yau's gradient estimates on harmonic functions \cite[Theorem 6]{CY} because their proofs are completely local, after a rescaling $\frac{1}{r}\dist_X$ if necessary.
\end{proof}

\begin{corollary}\label{cor:holder}
    Under the same setting as in Lemma \ref{lem:holder}, we have for some $\gamma \in (0,1)$
    \begin{equation}\label{eq:desiredgrad}
        |\nabla f|(x) \le \frac{C}{\dist_X(x, \mathcal{S})^{1-\gamma}},\quad \text{for $\meas_X$-a.e. $x \in B\setminus \mathcal{S}$,}
    \end{equation}
    where $C>0$ depends only on $n, K$ and $c$.
\end{corollary}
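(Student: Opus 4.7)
The plan is to combine the interior gradient estimate of Lemma \ref{lem:holder} (valid only on balls entirely inside the smooth part $\mathcal{R}$) with a H\"older regularity bound for $f$ available on all of $X$ thanks to PI alone. Fix $x \in B \setminus \mathcal{S}$ and set $d := \dist_X(x, \mathcal{S}) > 0$. Since $B_d(x) \subset \mathcal{R}$, the ball $B_{d/2}(x)$ satisfies $2B_{d/2}(x) \subset \mathcal{R}$, so Lemma \ref{lem:holder} applied to $f - f(x)$ (which has the same Laplacian as $f$) yields, at each Lebesgue point $y$ of $|\nabla f|$ lying in $B_{d/2}(x)$,
\begin{equation*}
    |\nabla f|(y) \le C\left( \frac{2}{d} \intav_{B_{d/2}(x)} |f - f(x)|\,\di \meas_X + c\right),
\end{equation*}
where $f(x)$ is read off the continuous representative of $f$ whose existence is the content of the next step. (In the case $d > 2$ one instead applies Lemma \ref{lem:holder} directly to the unit ball centered at $x$, which is already contained in $\mathcal{R}$.)

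The key remaining ingredient is a uniform H\"older bound $|f(y) - f(x)| \le A\,\dist_X(y,x)^{\gamma}$ for $y$ in a neighborhood of $B$, with exponent $\gamma \in (0,1)$ and constant $A$ depending only on $n, K, c$ and (a bound on) $\|f\|_{L^2(2B)}$. This is the standard De~Giorgi--Nash--Moser regularity for solutions of the Poisson equation $\Delta f = g$ with $g \in L^{\infty}$ on a PI Dirichlet space: Moser iteration driven by the doubling condition and the local Poincar\'e inequality of Definition \ref{def:PI} produces an oscillation decay for $f$, hence H\"older continuity with $\gamma = \gamma(c_v, c_p)$. This is precisely the type of statement invoked in \cite{Sturm96}, the same reference already used in the proof of Theorem \ref{thm:gaus} for H\"older continuity of the heat kernel. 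Crucially, this argument is purely of metric-measure character and therefore passes across $\mathcal{S}$ without any extra assumption.

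Inserting the H\"older bound into the display above gives
\begin{equation*}
    \frac{2}{d}\intav_{B_{d/2}(x)}|f - f(x)|\,\di \meas_X \le \frac{2A(d/2)^{\gamma}}{d} \le C\,d^{\gamma - 1},
\end{equation*}
and, since $d$ is bounded in terms of the radius of $B$, the additive constant $c$ is absorbed into $d^{\gamma-1}$ at the cost of enlarging $C$. Taking $y = x$ for $\meas_X$-a.e.\ $x$ (a Lebesgue point of $|\nabla f|$) produces the desired bound (\ref{eq:desiredgrad}). The main, and essentially only, obstacle is the H\"older step: one must check that $\gamma$ and $A$ can be chosen uniformly, with no dependence on the location of $x$ relative to $\mathcal{S}$. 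This is exactly what the \emph{global} PI hypothesis on $X$ guarantees, since the oscillation estimate depends solely on $c_v, c_p$ and the $L^{\infty}$ bound on $\Delta f$.
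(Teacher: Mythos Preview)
Your proof is correct and follows essentially the same approach as the paper: combine the interior gradient estimate from Lemma~\ref{lem:holder} on a ball of radius comparable to $\dist_X(x,\mathcal{S})$ with the H\"older continuity of $f$ coming from the global PI assumption. The only cosmetic difference is that the paper subtracts the average $\intav_{B_s(x)} f\,\di\meas_X$ and then locates a point $y$ where this normalized function vanishes, whereas you subtract $f(x)$ directly; both give the same oscillation bound $Cs^{\gamma}$ via H\"older continuity, and your explicit mention of the dependence of the H\"older constant on $\|f\|_{L^2(2B)}$ is in fact more precise than the paper's statement.
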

\begin{proof}
    Firstly it follows from our PI assumption that $f$ is $\gamma$-H\"older continuous on $B$ for some $\gamma \in (0, 1)$ (see for instance \cite[Lemma 2.3]{JKY} for a quantitative estimate). Take $x \in B \setminus \mathcal{S}$ and put $s:=\frac{\dist_X(x, \mathcal{S})}{2}$. It is enough to show the gradient estimate (\ref{eq:desiredgrad}) for
    \begin{equation}
        f_s:=f-\intav_{B_s(x)}f\di \meas_X.
    \end{equation}
    Since $\int_{B_s(x)}f_s\di \meas_X=0$ we can find $y \in \bar B_s(x)$ with $f_s(y)=0$ because of the continuity of $f$. Thus applying Lemma \ref{lem:holder} we have
    \begin{align}
        |\nabla f|(x) &\le C\left(\frac{1}{s}\intav_{B_s(x)}|f_s|\di \meas_X+ c\right) \le C\left(\frac{1}{s}\dist(x,y)^{\gamma}+c\right)\le C\left( \frac{1}{s^{1-\gamma}}+c\right)
    \end{align}
    which completes the proof.
\end{proof}
\begin{remark}
    By the standard proof based on the Harnack inequality (see \cite[Theorem 5.13]{BMosco}), the H\"older exponent $\gamma$ above also can be taken as a quantitative one.
\end{remark}
We are now in a position to introduce the first proof of  Theorem \ref{thm:codimension4}.
\begin{proof}[The first proof of Theorem \ref{thm:codimension4}] 

Thanks to Theorem \ref{thm:almosttoRCD}, it is enough to prove a QL (see also Corollary \ref{cor:charc}).

Fix an open ball $B$ in $X$ of radius $r>0$ and a harmonic function $h:B \to \mathbb{R}$. Notice $|\nabla h| \in L^4(\frac{1}{2}B)$ because Corollary \ref{cor:holder} implies
    \begin{align}\label{al:l421}
    \int_{\frac{1}{2} B\cap B_1(\mathcal{S})}|\nabla h|^4\di \haus^n &=\sum_{j \ge 0}\int_{\frac{1}{2} B \cap (B_{2^{-j}}(\mathcal{S}) \setminus B_{2^{-j-1}}(\mathcal{S}))}|\nabla h|^4\di \haus^n \nonumber \\
    &\le C \sum_{j \ge 0}(2^{-j})^{4} \cdot (2^{j})^{4(1-\gamma)} \nonumber \\
    &=C\sum_{j \ge 0}(2^{-4\gamma})^j<\infty.
\end{align}
Lemma \ref{lem:S} yields
 \begin{equation}\label{eq:bbsh}
        \frac{1}{2}\int_{B}\phi^2|\mathrm{Hess}_h|^2\di \meas_X\le \int_{B}\left(2|\nabla h|^2|\nabla \phi|^2 -K\phi^2|\nabla h|^2\right) \di \meas_X
    \end{equation}
for any Lipschitz function $\phi$ on $B \cap \mathcal{R}$ with compact support. 
    Take $f_{\epsilon} \in \mathrm{Lip}(X)$ as in the proof of Proposition \ref{prop:capzero}, where we put $A=\mathcal{S}$, thus $|\nabla f_{\epsilon}| \in L^4(B)$.
Then considering any cut-off function $\rho \in \mathrm{Lip}_c(B)$ and applying (\ref{eq:bbsh}) to $\phi:=f_{\epsilon}\rho$ with letting $\epsilon \to 0$, we obtain 
    \begin{equation}
        |\mathrm{Hess}_h| \in L^2\left(\frac{1}{2}B\right).
    \end{equation}
    Thus combining this with (\ref{al:l421}), we have 
    \begin{equation}\label{eq:harmonich12}
        |\nabla h| \in H^{1,2}_{\mathrm{loc}}\left(\frac{1}{2}B\right).
    \end{equation}
On the other hand, the local Bochner inequality on $\mathcal{R}$ together with similar arguments as in the proof of \cite[Lemma 2.9]{Dai} allows us to conclude that 
\begin{equation}\label{eq:subhar}
    \Delta |\nabla h| -c|\nabla h| \ge 0
\end{equation}
holds for some $c>0$ in a weak sense on $\frac{1}{2}B\setminus \mathcal{S}$, namely 
\begin{equation}
    \int_{\frac{1}{2}B}\left(-\langle \nabla \phi, \nabla |\nabla h|\rangle -c\phi|\nabla h|\right) \di \meas_X \ge 0
\end{equation}
for any $\phi \in \mathrm{Lip}_c(\frac{1}{2}B\setminus \mathcal{S})$ with $\phi \ge 0$.
Recalling Proposition \ref{prop:capzero}, (\ref{eq:subhar}) is valid on $\frac{1}{2}B$ (across $\mathcal{S}$) in a weak sense. Thus 
the standard Moser iteration arguments (e.g. \cite{BMosco, BjornBjorn}) allow us to conclude 
\begin{equation}\label{eq:harm1}
    \||\nabla h|\|_{L^{\infty}(\frac{1}{4}B)}^2 \le \intav_{\frac{1}{2}B}|\nabla h|^2 \di \meas_X.
\end{equation}
Then the Caccioppoli inequality \cite[Lemma 3.3]{Jiang} with a quantitative $L^{\infty}$-estimate for harmonic functions \cite[Lemma 2.1]{JKY} for PI spaces shows
\begin{equation}\label{eq:harm2}
     \intav_{\frac{1}{2}B}|\nabla h|^2 \di \meas_X\le \frac{C}{r} \intav_{\frac{1}{2}B}|h|\di \meas_X.
\end{equation}
The inequalities (\ref{eq:harm1}) and (\ref{eq:harm2}) easily imply a QL. Thus we conclude.
\end{proof}

Let us provide the second proof of Theorem \ref{thm:codimension4}, establishing the  regularity on the heat flow requested in Corollary \ref{cor:rcdc}, directly, without using a QL.

\begin{proof}[The second proof of Theorem \ref{thm:codimension4}]
  Since $X$ is PI,   it has at most exponential volume growth by Proposition \ref{prop:poincaresobolev}. 
  To apply Theorem \ref{thm:be} it suffices to prove that for any $f\in L^2\cap L^\infty(X)$, we have $|\nabla h_tf|\in L^\infty(X)$ (see Remark \ref{rem:linfty}). 
  Using the heat kernel bounds \eqref{eq:heatflowbound} and \eqref{eqn:heat kernel bounds} we see that there exists a  $C>0$ such that for all $t>0$,
    $$\|h_tf\|_{L^\infty(X)}+\|h_t f\|_{L^2(X)}\leq C.$$
    Moreover, for any $j\geq 1$ there exists a $C_j>0$ such that  for all $t>0$
    \begin{equation}\label{eqn:heat kernel derivative bound}\|\partial_t^j h_tf\|_{L^\infty(X)}\leq C_{j}t^{-j/2}.
    \end{equation}
   Denote $u_t=|\nabla h_tf|$. Our goal is to show that $u_t\in L^\infty(X)$ for $t>0$.

 Let $\psi_\epsilon$ be a cut-off function which vanishes in the $\epsilon$-neighborhood of $\mathcal S$ and takes value 1 outside the $2\epsilon$-neighborhood of $\mathcal S$.   Given any ball $B\subset X$, let  $\rho$ be a Lipschitz cut-off function which is supported in $2B$ and takes value $1$ on $B$. Notice that $$\Delta(\partial_t (h_tf))=-\partial_t^2h_tf.$$ It follows from integration by parts that 
    $$\int_B (\rho\psi_\epsilon)^2|\nabla \partial_t h_tf |^2\di \meas_X\leq C \int_{2B} \left(|\nabla(\rho\psi_\epsilon)|^2(\partial_th_t f)^2+(\rho\psi_\epsilon)^2 (\partial_t^2h_tf)^2\right)\di \meas_X.$$
    Letting $\epsilon\rightarrow 0$, by \eqref{eq:difference} and \eqref{eqn:heat kernel derivative bound} we see that for $t>0$, $\partial_t h_tf\in H^{1, 2}(B)$. In particular,  $\partial_th_tf\in H^{1,2}_{\mathrm{loc}}(X)$.
    
   On $\mathcal R$ we have a refined Kato inequality (see the proof of \cite[Lemma A.1]{CW})
   \begin{equation}
       |\nabla^2h_tf|^2\geq \frac{n}{n-1}|\nabla u_t|^2-Ct^{-1/2}|\nabla u_t|.
\end{equation}
Fix $\delta>0$ small so that $q:=\frac{n-2}{n-1}+2\delta<1$. It follows  that 
$$|\nabla^2 h_tf|^2\geq \left(\frac{n}{n-1}-\delta\right)|\nabla u_t|^2-Ct^{-1}.$$
Denote $K_-=\min(K,0)$.
The Bochner formula implies the following two inequalities on $\mathcal R\times (0, \infty)$
\begin{equation} \label{eqn:elliptic inequality}
	\Delta u_t\geq u_t^{-1}|\nabla^2h_tf|^2-u_t^{-1}|\nabla u_t|^2+K_-u_t-|\nabla \partial_t h_tf|;
\end{equation}

\begin{equation}\label{eqn:parabolic inequality}
	(\partial_t+\Delta) u_t\geq u_t^{-1}|\nabla^2h_t f|^2-u_t^{-1}|\nabla u_t|^2+K_-u_t.
\end{equation}
A straightforward computation shows that
\begin{eqnarray*}
	\Delta u_t^{q}&=&qu_t^{q-1}\Delta u_t+q(q-1)u^{q-2}|\nabla u_t|^2\\
&\geq &q\delta u_t^{q-2}|\nabla u_t|^2-Ct^{-1} qu_t^{q-1}+qK_-u_t^q-qu_t^{q-1}|\nabla\partial_t h_tf|
\end{eqnarray*}
and
\begin{eqnarray*}
	(\partial_t+\Delta)u_t^{q}&=&qu_t^{q-1} (\partial_t+\Delta)u_t+q(q-1)u_t^{q-2}|\nabla u_t|^2\\
&\geq &q\delta u_t^{q-2}|\nabla u_t|^2-Ct^{-1} qu_t^{q-1}+qK_-u_t^q.
\end{eqnarray*}
Denote $v_t=u_t^q$ and $v(x, t)=v_t(x)$. 
  We claim $v\in H^{1, 2}_{\mathrm{loc}}(X\times (0, \infty))$. Notice that 
$$|\partial_t v|\leq q|\partial_t|\nabla h_tf||\leq q|\nabla\partial_th_tf|.$$
It follows that $\partial_tv\in L^2_{\mathrm{loc}}(X\times (0, \infty))$. 
One can see that on the regular set $\mathcal R\times (0,\infty)$ the following differential inequalities hold 
\begin{equation}\label{e:weak elliptic inequality}
	\Delta v_t\geq -Ct^{-1}v^{q-1}-Cv_t-qv_t^{\frac{q-1}{q}}|\nabla\partial_t h_tf|;
\end{equation}
\begin{equation}\label{e:weak parabolic inequality}
(\partial_t+\Delta)v\geq -Ct^{-1}v^{q-1}-Cv.
\end{equation}
As before we may multiple both sides of \eqref{e:weak elliptic inequality} by $(\rho\psi_\epsilon)^2v_t$ and integrate by parts to obtain
\begin{equation}\label{eqn:integration by parts}
	\int_{B} (\rho\psi_\epsilon)^2|\nabla v_t|^2\di \meas_X\leq C\int_{2B}(t^{-1}v_t^q+v_t^2+v_t^{2-\frac{1}{q}}|\nabla \partial_t h_tf|)\di \meas_X+\int_{2B} |\nabla (\rho\psi_\epsilon)|^2v_t^{2}\di \meas_X.
\end{equation}
 A standard application of the Li-Yau gradient estimate for heat flows implies that for all $t>0$,  $u_t=O(\dist_X(\mathcal S, \cdot)^{-1})$.  Since $q<1$ and $\|\partial_t u\|_{H^{1, 2}(X)}\leq C$, using our codimension $4$ assumption we see that the right hand side of \eqref{eqn:integration by parts} is uniformly bounded as $\epsilon\rightarrow 0$.  Therefore $v\in H^{1, 2}_{\mathrm{loc}}(X\times (0, \infty))$. This proves the claim.

It is now easy to check, again using the cut-off function $\rho\psi_\epsilon$ and our codimension $4$ assumption, that the parabolic differential inequality for $w$ holds weakly across the singular set $\mathcal S$. Since we have a uniform Sobolev inequality under our PI assumption, by parabolic Moser iteration we obtain that for any unit ball $B\subset X$ and $t>0$
$$\sup_{B}v_t\leq C_t\|v\|_{L^2(2B\times [t/2, 2t])}\leq C_t\|f\|_{L^2}^2.$$
In particular, $v_t\in L^\infty (X)$ for $t>0$. So $u_t=|\nabla h_tf|\in L^\infty(X)$ for $t>0$.
\end{proof}

\begin{remark}
The above results can be compared with the results of Chen-Wang \cite{CW}, where the authors studied the notion of a singular Calabi-Yau space and generalized the Cheeger-Colding theory to such spaces. In \cite{CW} most assumptions are local except the requirement on the weak convexity, which does not seem to be local and could be difficult to verify in practice. By contrast, our assumptions in Theorem \ref{thm:codimension4} are purely local and a version of weak convexity follows as a consequence from general RCD theory \cite[Theorem 6.5]{Deng}. One can show that a singular Calabi-Yau space in the sense of \cite{CW} is also an $\RCD(0, n)$ space because they actually checked a global PI condition with the SL and a QL, see \cite[Corollary 2.4, Propositions 2.7, 2.24 and Claim 3.16]{CW}.
\end{remark}
It is worth mentioning that the glued space of two $n$-dimensional spheres of radius $1$ at a common point is a $\BE(n-1, n)$ space (of course the regular part has constant sectional curvature $1$) with a globally volume doubling condition, but a local Poincar\'e inequality is not satisfied. See \cite[Example 3.8]{Honda3}. This observation allows us to conclude that the PI assumption in the next theorem is sharp.
Moreover recall that the corresponding $2$-dimensional statement does not hold, see Remark \ref{rem:example}.
\begin{theorem}[Characterization of Einstein $4$-orbifold]\label{thm:orbifold}
    Let $X$ be a $4$-dimensional 
    almost smooth non-collapsed metric measure space with
    \begin{equation}\label{eq:einstein}
        \mathrm{Ric}_{4, X}=K,\quad \text{on $\mathcal{R}$,}
    \end{equation}
    for some $K \in \mathbb{R}$. If the singular set is discrete,
then the following two conditions are equivalent. 
    \begin{enumerate}
        \item $X$ is PI with the SL and $Q=4$ as in (\ref{eq:ahlfors});
        \item $X$ is an Einstein orbifold.
    \end{enumerate}
\end{theorem}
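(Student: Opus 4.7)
The plan is to prove the two implications separately. For (2) $\Rightarrow$ (1), an Einstein $4$-orbifold is locally modeled near each of its isolated singularities on $(\mathbb{R}^4/\Gamma, g)$ with $\Gamma \subset O(4)$ finite, acting freely on $\mathbb{S}^3$, and $g$ a smooth $\Gamma$-invariant Einstein metric. Such a quotient is a non-collapsed $\RCD(K, 4)$ space (lift to the $\Gamma$-cover where Theorem \ref{thm:han} applies, then descend by the standard identification of the quotient Cheeger energy with the $\Gamma$-invariant part upstairs), hence PI by Theorem \ref{thm:lpircd}. The Ahlfors exponent $Q = 4$ follows from the small-scale expansion $\haus^4(B_r(x)) = |\Gamma_x|^{-1} \omega_4 r^4 + O(r^6)$ at each orbifold point, combined with the Bishop--Gromov inequality globally.

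For (1) $\Rightarrow$ (2), the first task is to verify the codimension $4$ condition (\ref{eq:codi4ass}) in order to apply Theorem \ref{thm:codimension4}. Since $\mathcal{S}$ is discrete, it suffices to check $\haus^4(B_\epsilon(p)) = O(\epsilon^4)$ at each singular point $p$. This can be obtained by iteratively covering annuli $B_r(p) \setminus B_{r/3}(p)$ with a uniformly bounded (by local doubling) number of balls $B_{r/10}(q_i)$ centered in $\mathcal{R}$ at distance $\ge r/3$ from $p$, on each of which Bishop--Gromov applied to the smooth Einstein metric on $\mathcal{R}$ yields $\haus^4(B_{r/10}(q_i)) \le C(r/10)^4$; iterating on $B_{r/3}(p)$ and summing a geometric series gives the desired bound. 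Therefore Theorem \ref{thm:codimension4} applies and $X$ is a non-collapsed $\RCD(K, 4)$ space. At each $p \in \mathcal{S}$ I would then invoke the cone theorem in the non-collapsed RCD setting (De Philippis--Gigli): every tangent cone at $p$ is a metric cone $C(Y)$ over a compact non-collapsed $\RCD(2, 3)$ link $Y$. Since $\mathcal{S}$ is discrete, blow-ups of the smooth Einstein $(\mathcal{R}, g)$ converge smoothly on compact subsets of $C(Y) \setminus \{o\}$ to a smooth Ricci-flat cone metric, forcing $Y$ to be a smooth closed Einstein $3$-manifold with $\mathrm{Ric}_Y = 2 g_Y$; by the three-dimensional Einstein classification, $Y = \mathbb{S}^3/\Gamma$ for some finite subgroup $\Gamma \subset O(4)$ acting freely on $\mathbb{S}^3$.

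The main obstacle, and the decisive final step, is to upgrade this tangent-cone description to a smooth orbifold chart at $p$. For this I would appeal to an $\epsilon$-regularity theorem for non-collapsed Einstein $4$-spaces in the spirit of Anderson, Bando--Kasue--Nakajima, and Cheeger--Tian: a unit ball in a non-collapsed Einstein $4$-manifold with $\mathrm{Ric} = K$ which is sufficiently close in the pointed measured Gromov--Hausdorff sense to a unit ball in $\mathbb{R}^4/\Gamma$ admits a smooth $\Gamma$-equivariant diffeomorphism from a definite sub-ball onto a ball in $\mathbb{R}^4$, under which the pulled-back metric is smoothly close to the flat one. Applying this to geodesic balls in $\mathcal{R}$ approaching $p$ along a blow-up sequence realizing the tangent cone $\mathbb{R}^4/\Gamma$, and passing to the limit, yields a smooth orbifold chart at $p$ on which the Einstein equation is preserved, identifying $X$ locally with $(\mathbb{R}^4/\Gamma, g)$ for a smooth Einstein $g$. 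This completes the proof that $X$ is an Einstein orbifold.
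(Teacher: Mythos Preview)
Your proposal is essentially correct and follows the same architecture as the paper's proof: verify codimension $4$, apply Theorem~\ref{thm:codimension4} to get non-collapsed $\RCD(K,4)$, analyze tangent cones via De~Philippis--Gigli, identify links as spherical space forms, then upgrade to an orbifold chart via Bando--Kasue--Nakajima type results. Two differences are worth recording.

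First, you supply an explicit argument (annular covering plus Bishop--Gromov on the smooth part) for the bound $\haus^4(B_\epsilon(p))=O(\epsilon^4)$, whereas the paper invokes Theorem~\ref{thm:codimension4} directly without spelling this out. Your argument in fact does not use the hypothesis $Q=4$, so you are proving slightly more than stated. The intended shorter route via $Q=4$ is: from \eqref{eq:ahlfors} one has $\haus^4(B_t(q))\le c^{-1}(t/s)^4\haus^4(B_s(q))$; letting $s\to 0$ at a regular point $q$ (where $\haus^4(B_s(q))\sim \omega_4 s^4$ by smoothness) yields $\haus^4(B_t(q))\le C t^4$, and then $B_r(p)\subset B_{2r}(q)$ for a nearby regular $q$ transfers this to the singular point $p$.

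Second, in the final step you apply $\epsilon$-regularity ``along a blow-up sequence realizing the tangent cone $\mathbb{R}^4/\Gamma$''. The paper inserts an intermediate step you omit: it first argues that the tangent cone at $p$ is \emph{unique}, since the set of all tangent cones at $p$ is connected and compact under pointed Gromov--Hausdorff convergence, while cones of the form $C(\mathbb{S}^3/\Gamma)$ with $\Gamma\subset O(4)$ finite form a discrete family there, forcing a single $\Gamma$. Only then does it cite \cite{BKN} (together with \cite{DS, Sesum}). Your formulation is not wrong---a single application of an $\epsilon$-regularity/gap theorem at one sufficiently small scale already produces an orbifold chart on a definite ball, which a posteriori forces uniqueness---but as written it is ambiguous whether you intend to patch charts obtained at different scales along the sequence (which would require uniqueness beforehand) or to apply the theorem once. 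Making the uniqueness step explicit, as the paper does, removes this ambiguity.
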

\begin{proof}
    Since it is well-known that the implication from (2) to (1) holds (for instance techniques in \cite{CW} can be applied), let us prove the remaining one. Thus assume that (1) holds. 
    
    Then Theorem \ref{thm:codimension4} shows that $X$ is a non-collapsed $\RCD(K, 4)$ space.
    Take a point $x \in X$ and consider a tangent cone $Y$ at $x \in X$. Thanks to \cite[Proposition 2.8]{DG} (see also \cite{Ketterer}), we know $Y$ is the metric cone over some non-collapsed $\RCD(2,3)$ space $Z$. The smoothness of a neighborhood of $x$ except for $x$ implies the smoothness of $C(Z)$ except for the pole $p=p_{C(Z)}$, because of the Einstein equation of (\ref{eq:einstein}) (see also \cite[Theorem 7.3]{CheegerColding1} and \cite[Theorem 1.4]{CN}). Therefore we can conclude that $Z$ is also smooth  (where we immediately used the smoothness outside $\{p\}$ of the distance function $r_p$ from $p$, justified by the elliptic regularity theorem together with a fact that $\Delta r_p^2$ is a positive constant).  In particular $Z=\mathbb{S}^3/\Gamma$ with the sectional curvature $1$, where $\Gamma$ is a finite subgroup of $O(4)$. Since the set of all tangent cones at $x$ form a connected and compact set under the pointed Gromov-Hausdorff topology, it is easy to see that there is a unique tangent cone $Y$ at $x$. By the result of Bando-Kasue-Nakajima \cite{BKN} (see also \cite[Theorem 5.7]{DS} and \cite[Theorem 2.4]{Sesum}) one concludes that $X$ is an Einstein orbifold. 
\end{proof}

\section{Almost RCD spaces}\label{sec:almostrcd}
Notice that the smoothness assumption in the last section played  a role to get the $H^{1,2}$-estimate of $|\nabla f|^2$ with the $L^2$-bound of the Hessian of certain function $f$, coming from the Bochner inequality. Recalling the RCD theory, we can easily see that the smoothness can be replaced by RCD \textit{parts}, to get the same conclusion. Thus we reach the following definition.

\begin{definition}[Almost RCD space]\label{def:almostrcd}
    A metric measure space $X$ is said to be an \textit{almost} (\textit{non-collapsed}, respectively) $\RCD(K, N)$ \textit{space} for some $K \in \mathbb{R}$ and some $N \in [1, \infty]$ if there exist a closed set $\mathcal{S}=\mathcal{S}^{\RCD}$, called the RCD-\textit{singular set} of $X$ (from the point of view of being an RCD), such that the following hold.
    \begin{enumerate}
        \item{(RCD part)} Let $\mathcal{R}=\mathcal{R}^{\mathrm{RCD}}:=X \setminus \mathcal{S}$ and call it the RCD \textit{part}, or RCD-\textit{regular set} of $X$. Then $\mathcal{R}$ is a locally (non-collapsed, respectively) $\RCD(K, N)$ space (see Definition \ref{def:lrcd});
        \item{(Zero capacity)} $\mathcal{S}$ has zero $2$-capacity.
    \end{enumerate}
\end{definition}
Note that thanks to (\ref{eq:localitygrad}) with Remark \ref{rem:grad}, it is easy to see that $X$ is IH if it is an almost $\RCD$ space.

Let us introduce the following three results we can actually prove by the same ways as discussed in the last section.
\begin{theorem}\label{thm:fromalmostrcdtorcd}
    Let $X$ be an almost $\RCD(K, N)$ space whose metric structure is a length space for some $K \in \mathbb{R}$ and some $N \in [1, \infty)$. Then the following two conditions are equivalent.
    \begin{enumerate}
        \item $X$ is  PI with the SL and a QL.
        \item $X$ is an $\RCD(K, N)$ space.
    \end{enumerate}
\end{theorem}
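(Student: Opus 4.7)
The direction $(2) \Rightarrow (1)$ is immediate: any $\RCD(K,N)$ space with finite $N$ is PI by Theorem \ref{thm:lpircd} and satisfies a QL by Theorem \ref{thm:qLCh}. The substance of the theorem is the converse $(1) \Rightarrow (2)$, which I plan to prove by adapting the parabolic proof of Theorem \ref{thm:almosttoRCD} to the almost RCD setting, essentially replacing the smooth regular part by the locally $\RCD$ regular part and using the Hessian Bochner inequality (\ref{eq:hessbochner}) in place of the pointwise smooth Bochner identity (\ref{prop:besmooth}).

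Concretely, I would first establish the following almost-RCD analogue of Theorem \ref{thm:be}: assuming $X$ is an almost $\RCD(K,N)$ space for which $|\nabla h_t f| \in L^\infty(X)$ holds for every $f$ in a dense subset $D \subset L^2(X)$ and every $t \in (0,1]$, then $X$ is a $\BE(K,N)$ space. The proof would follow the same template as Theorem \ref{thm:be}: for each $f_j \in D$ and $t > 0$, prove $|\nabla h_t f_j|^2 \in H^{1,2}_{\mathrm{loc}}(X)$ by testing the Hessian Bochner inequality (\ref{eq:hessbochner}), valid on the locally $\RCD(K,N)$ regular part $\mathcal{R}$ by Theorem \ref{thm:hessboch}, against $(\rho \phi_i)^2$, where $\rho \in \mathrm{Lip}_c(X)$ is a standard Lipschitz cut-off and the $\phi_i$ are the Lipschitz functions vanishing near $\mathcal{S}$ provided by the zero $2$-capacity assumption (Definition \ref{def:capzero}). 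Sending $i \to \infty$ replaces the role of Lemma \ref{lem:S}; the rest of the argument (the local-to-global passage via $\phi \phi_i$ and the limits $j \to \infty$, $t \to 0$, together with the locality identities (\ref{eq:localitygrad}), (\ref{eq:localitylap}), (\ref{eq:localityhess})) goes through verbatim.

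Given this generalization of Theorem \ref{thm:be}, I would conclude as in the second (parabolic) proof of Theorem \ref{thm:almosttoRCD}: Proposition \ref{prop:poincaresobolev} guarantees $X$ is a proper geodesic space with exponential volume growth (\ref{eq:expgrowth}); Proposition \ref{prop:PIStoL} gives the Sobolev-to-Lipschitz property; and Corollary \ref{cor:lipreg} supplies the required heat flow regularity on $D = L^\infty \cap L^2(X)$ (see Remark \ref{rem:linfty}). Combining the $\BE(K,N)$ property with these three ingredients yields that $X$ is an $\RCD(K,N)$ space.

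The main technical point is verifying that the zero $2$-capacity of $\mathcal{S}$ is strong enough to push the Hessian Bochner inequality from $\mathcal{R}$ across the singular set: since the quantities in play are of first order in $u_t = h_t f_j$, only $H^{1,2}$-convergence of the cut-offs $\phi_i$ is needed, which is exactly what vanishing $2$-capacity provides. Unlike Theorem \ref{thm:codimension4}, no higher codimension assumption on $\mathcal{S}$ is required here, because the QL hypothesis already supplies $|\nabla h_t f_j| \in L^\infty$ directly through Corollary \ref{cor:lipreg}, so we never need to bootstrap harmonic-function regularity across $\mathcal{S}$.
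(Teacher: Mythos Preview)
Your proposal is correct and follows exactly the route the paper intends. The paper does not write out a proof for this theorem but simply states that it (together with the almost-$\RCD$ analogues of Theorem~\ref{thm:be} and Corollary~\ref{thm:almostsmoothcompact}) ``can actually prove by the same ways as discussed in the last section,'' i.e.\ by transporting the parabolic proof of Theorem~\ref{thm:almosttoRCD} to the almost-$\RCD$ setting with the smooth pointwise Bochner inequality replaced by the Hessian Bochner inequality from Theorem~\ref{thm:hessboch}; your outline reproduces precisely this substitution, and the paper later confirms (in the proof of Theorem~\ref{thm:cod4almost}) that ``similar arguments as in the proof of Lemma~\ref{lem:S}'' carry over to the locally $\RCD$ regular part.
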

\begin{theorem}
Let $X$ be an almost $\RCD(K, N)$ space for some $K \in \mathbb{R}$ and some $N \in [1, \infty]$.
Assume that $X$ is proper and that the following regularity for the heat flow holds for a dense subset $D$ of $L^2(X)$: 
    \begin{itemize}
        \item for all $f \in D$  
        and $0<t \le 1$, we have $|\nabla h_tf| \in L^{\infty}_{\mathrm{loc}}(X)$.
    \end{itemize} 
    Then $X$ is a locally $\BE(K, N)$ space. Furthermore if
    \begin{itemize}
        \item for all $f \in D$ 
        and $0<t \le 1$, we have $|\nabla h_tf| \in L^{\infty}(X)$,
    \end{itemize} 
    then $X$ is a $\BE(K,N)$ space.
\end{theorem}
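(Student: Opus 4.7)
The plan is to mirror the proof of Theorem \ref{thm:be} line by line, replacing the pointwise smooth Bochner identity (used via Lemma \ref{lem:S}) by its $L^2$ counterpart on locally $\RCD$ open subsets that follows from Theorem \ref{thm:hessboch}. The zero $2$-capacity of the $\RCD$-singular set $\mathcal{S}$ will still supply cut-off functions $\phi_i \in \mathrm{Lip}(X)$ with $0\le \phi_i\le 1$, $\phi_i=0$ near $\mathcal{S}$ and $\phi_i\to 1$ in $H^{1,2}$ on balls, as in Definition \ref{def:capzero}.

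First I would establish an $\RCD$ analogue of Lemma \ref{lem:S}: if $U\subset X$ is locally $\RCD(K,N)$ (in our case $U=\mathcal{R}$) and $h \in D(\Delta, U)$ with $\Delta h \in H^{1,2}_{\mathrm{loc}}(U)$, then for any Lipschitz $\psi$ with compact support in $U$,
\begin{equation*}
\frac{1}{2}\int_U \psi^2 |\mathrm{Hess}_h|^2 \di \meas_X \le \int_U \left( 2|\nabla h|^2 |\nabla \psi|^2 - \psi^2 \langle \nabla \Delta h, \nabla h\rangle - K\psi^2 |\nabla h|^2\right) \di \meas_X.
\end{equation*}
This will be deduced by a partition of unity over locally $\RCD$ charts on $\supp \psi$, applying the global Hessian Bochner inequality (\ref{eq:hessbochner}) on each patch (with good cut-offs provided by the $\RCD$ theory), then using the locality of the Hessian (\ref{eq:localityhess}) to glue, together with the identity $\Delta(\psi^2)=2\psi\Delta\psi+2|\nabla \psi|^2$ and the Kato inequality $|\nabla|\nabla h||\le|\mathrm{Hess}_h|$ with Cauchy--Schwarz, just as in the smooth proof.

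With this in hand, fix $f\in D(\Delta)$ with $\Delta f \in H^{1,2}(X)$, approximate $f_j \in D$ in $L^2(X)$, and set $u_{j,t}:=h_tf_j$ for $t\in(0,1]$. Apply the local inequality with $\psi := \rho\phi_i$ for arbitrary $\rho\in\mathrm{Lip}_c(X)$ and capacity cut-offs $\phi_i$. Letting $i\to\infty$ and using the assumed local gradient bound $|\nabla u_{j,t}|\in L^\infty_{\mathrm{loc}}(X)$ yields $|\mathrm{Hess}_{u_{j,t}}|\in L^2_{\mathrm{loc}}(X)$, hence $|\nabla u_{j,t}|^2 \in H^{1,2}_{\mathrm{loc}}(X)$. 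For any admissible $\phi \in D_c(\Delta)\cap L^{\infty}(X)$ with $\phi\ge 0$ and $\Delta \phi \in L^{\infty}(X)$, multiply the pointwise local Hessian Bochner inequality on $\mathcal{R}$ by $\phi\phi_i$, integrate by parts on the left using the $H^{1,2}_{\mathrm{loc}}$ regularity just established, and pass $i\to\infty$, then $j\to\infty$, then $t\to 0$ exactly as in the proof of Theorem \ref{thm:be}. This gives the locally $\BE(K,N)$ conclusion.

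For the second statement, run the same argument with $\rho$ replaced by Lipschitz cut-offs $\rho_R$ supported in $2B_R$ with $\rho_R\equiv 1$ on $B_R$ and $|\nabla\rho_R|\le 1/R$. The global bound $|\nabla u_{j,t}|\in L^\infty(X)$ together with $\Delta u_{j,t}\in L^2(X)$ allow us to pass $R\to\infty$ and conclude $|\mathrm{Hess}_{u_{j,t}}|\in L^2(X)$, hence $|\nabla u_{j,t}|^2\in H^{1,2}(X)$. Integration by parts against arbitrary $\phi \in D(\Delta)\cap L^{\infty}(X)$ with $\phi \ge 0$ and $\Delta \phi \in L^\infty(X)$ then yields the global Bochner inequality, completing the $\BE(K,N)$ proof. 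The main technical obstacle I anticipate is the first step, i.e.\ the derivation of the $\RCD$ analogue of Lemma \ref{lem:S} on $\mathcal{R}$, since Theorem \ref{thm:hessboch} is formulated only for honest $\RCD$ spaces and must be glued onto an open subset through locality; once this local Hessian inequality is in place, the remainder is a faithful transcription of the almost smooth argument.
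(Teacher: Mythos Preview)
Your proposal is correct and matches the paper's own approach: the paper does not give a separate proof for this theorem but explicitly states that it is proved ``by the same ways as discussed in the last section'' (i.e., by mimicking the proof of Theorem~\ref{thm:be}), after observing at the start of Section~\ref{sec:almostrcd} that ``the smoothness can be replaced by RCD parts'' precisely because the Hessian Bochner inequality of Theorem~\ref{thm:hessboch} is available on the locally $\RCD$ regular set. Your identification of the $\RCD$ analogue of Lemma~\ref{lem:S} as the one new ingredient, and your plan to derive it via a partition of unity over locally $\RCD$ charts together with locality of the Hessian (\ref{eq:localityhess}), is exactly what the paper has in mind; note also that Proposition~\ref{prop:belo} already packages the needed weak Bochner inequality on $\mathcal{R}$ for the step replacing (\ref{prop:besmooth}).
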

\begin{corollary}\label{thm:almostcompact}
    Let $X$ be an almost $\RCD(K, N)$ space for some $K \in \mathbb{R}$ and some $N \in [1, \infty]$. Assume that the canonical inclusion of $H^{1,2}(X)$ into $L^2(X)$ is a compact operator and that any eigenfunction $f$ of $-\Delta$ on $X$ satisfies $|\nabla f| \in L^{\infty}(X)$. Then $X$ is a $\BE(K, N)$ space.
\end{corollary}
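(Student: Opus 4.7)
The plan is to follow the strategy of Corollary \ref{thm:almostsmoothcompact}, applying the preceding theorem with the dense subset $D\subset L^2(X)$ chosen to be the linear span of eigenfunctions of $-\Delta$. Since any almost $\RCD$ space is automatically IH, and since the inclusion $H^{1,2}(X) \hookrightarrow L^2(X)$ is compact by assumption, the self-adjoint operator $-\Delta$ associated with the Cheeger energy has compact resolvent. The spectral theorem then yields a discrete non-decreasing sequence of eigenvalues $0 \le \lambda_0 \le \lambda_1 \le \cdots \to \infty$ together with an $L^2$-orthonormal basis of eigenfunctions $\{\psi_i\}_i$, so that $D := \mathrm{span}_{\mathbb{R}}\{\psi_i\}_i$ is dense in $L^2(X)$ by construction.

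To check the global gradient regularity required by the second statement of the preceding theorem, I would simply observe that for any finite combination $f = \sum_{i=1}^k c_i \psi_i \in D$ the heat flow diagonalizes, giving $h_t f = \sum_{i=1}^k c_i e^{-\lambda_i t}\psi_i$ for every $t > 0$. Since each $|\nabla \psi_i| \in L^\infty(X)$ by hypothesis, sublinearity of the minimal relaxed slope yields
\begin{equation}
    |\nabla h_t f| \le \sum_{i=1}^k |c_i| e^{-\lambda_i t} |\nabla \psi_i| \in L^\infty(X),
\end{equation}
uniformly for $t \in (0, 1]$, which is precisely the global heat-flow regularity hypothesis of the preceding theorem applied to this choice of $D$.

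The conclusion that $X$ is a $\BE(K,N)$ space then follows at once from the preceding theorem. I do not anticipate a genuine obstacle: all the analytic heavy lifting (Bochner inequality on the RCD part, zero-capacity cut-off arguments, passage from $h_t f$ back to $f$ as $t \to 0$) has already been absorbed into that theorem, and the role of this corollary is merely to exhibit a natural dense subset on which the global heat-flow gradient bound is automatic. The only point requiring a brief verification is that the properness assumption appearing in the preceding theorem does not cause trouble here, since the compactness of the embedding $H^{1,2}(X) \hookrightarrow L^2(X)$ already encodes enough geometric compactness of $X$ (in particular a Rellich-type property) for the relevant cut-off and integration-by-parts steps to go through.
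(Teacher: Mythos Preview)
Your approach is essentially identical to the paper's: the paper states that this corollary is proved ``by the same ways'' as Corollary~\ref{thm:almostsmoothcompact}, whose one-line proof is exactly to take $D$ to be the eigenfunctions and invoke the preceding heat-flow theorem via $h_t\psi_i = e^{-\lambda_i t}\psi_i$.

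One small correction on your final paragraph: the compactness of the embedding $H^{1,2}(X)\hookrightarrow L^2(X)$ does \emph{not} by itself imply that $(X,\dist_X)$ is proper, so your claimed justification does not quite work as stated. The paper handles this simply by inheriting properness as a standing hypothesis from the almost-smooth section (where it is declared at the start of the subsection on almost smooth spaces), and the preceding theorem in Section~\ref{sec:almostrcd} explicitly assumes it; you should read properness as an implicit hypothesis here rather than attempt to derive it from the Rellich property.
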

On the other hand, we need to take care of the proof of the following because a neighborhood of an RCD-regular point $x$, which can be identified by definition as an open subset of an $\RCD(K, N)$ space, may be far from the RCD-singular set $\mathcal{S}$. Recall that the corresponding almost smooth result, Theorem \ref{thm:codimension4}, was proved by using Lemma \ref{lem:holder} whose proof is based on a computation on a local coordinate established by Cheng-Yau, thus we could use the ball of radius $\frac{r}{4}$ to justify the arguments, where $r$ is the distance to $\mathcal{S}$.

Notice that a main result, Theorem \ref{thm:mainsmooth}, is a direct consequence of the following because of Proposition \ref{prop:almostsmoothrcd}.
\begin{theorem}\label{thm:cod4almost}
    Let $X$ be an almost $\RCD(K, N)$ PI geodesic space for some $K \in \mathbb{R}$ and some $N \in [1, \infty]$ with the SL. If $\mathcal{S}$ has at least codimension $4$,
     then $X$ is an $\RCD(K,N)$ space.

\end{theorem}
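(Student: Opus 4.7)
The plan is to reduce to Theorem \ref{thm:fromalmostrcdtorcd} by establishing a QL for $X$, following the first (elliptic) proof of Theorem \ref{thm:codimension4} with each smooth input replaced by its RCD-theoretic counterpart. The Cheng-Yau estimate underlying Lemma \ref{lem:holder} gets replaced by the RCD QL (Theorem \ref{thm:qLCh}) applied on the locally $\RCD$ part $\mathcal{R}$, while the pointwise smooth Bochner identity gets replaced by the Hessian-Bochner inequality (\ref{eq:hessbochner}) available on open subsets of locally $\RCD$ spaces (see Theorem \ref{thm:hessboch} and the discussion at the end of Subsection \ref{subsec:localobj}).

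Fix a harmonic $h$ on a ball $B \subset X$. The first and key step is to prove the distance-weighted gradient bound
\begin{equation}\label{eq:plan-grad}
|\nabla h|(x) \le \frac{C}{\dist_X(x,\mathcal{S})^{1-\gamma}}, \qquad \meas_X\text{-a.e. } x \in B \setminus \mathcal{S},
\end{equation}
for some quantitative $\gamma \in (0,1)$ and uniform $C$. For $x \in \mathcal{R}$ with $s := \dist_X(x,\mathcal{S})/2$, the ball $B_s(x)$ sits inside $\mathcal{R}$; covering it by finitely many RCD charts supplied by the locally $\RCD$ structure, applying the scale-invariant RCD QL within these charts, and using the $L^\infty$-bound for harmonic functions on PI spaces (cf.\ \cite[Lemma 2.1]{JKY}), I would extract a Lipschitz estimate for $h$ at scale $s$. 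Combining with the PI-H\"older regularity of $h$ exactly as in the proof of Corollary \ref{cor:holder} then yields (\ref{eq:plan-grad}).

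From (\ref{eq:plan-grad}), the codimension $4$ hypothesis and the dyadic decomposition in (\ref{al:l421}) give $|\nabla h|\in L^4_{\loc}(\frac{1}{2}B)$. Next, I would test the Hessian-Bochner inequality on $\mathcal{R}$ against cut-offs of the form $\psi f_\epsilon$, where $\psi \in \mathrm{Lip}_c(\frac{1}{2}B)$ and $f_\epsilon$ is a cut-off vanishing on a neighborhood of $\mathcal{S}$ as in Proposition \ref{prop:capzero} (under codimension $4$ one may arrange $|\nabla f_\epsilon|\in L^4$), and pass to the limit $\epsilon \to 0$ via the $L^4_{\loc}$-bound on $|\nabla h|$ to conclude $|\mathrm{Hess}_h|\in L^2_{\loc}(\frac{1}{2}B)$, hence $|\nabla h|^2 \in H^{1,2}_{\loc}(\frac{1}{2}B)$. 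The identical cut-off procedure applied to the Bochner inequality itself promotes the pointwise inequality on $\mathcal{R}$ to the weak differential inequality $\Delta |\nabla h|^2 \ge 2K|\nabla h|^2$ across $\mathcal{S}$, legitimized by the zero $2$-capacity of $\mathcal{S}$. Moser iteration on the PI space $X$ then yields $\||\nabla h|^2\|_{L^\infty(\frac{1}{4}B)} \le C\intav_{\frac{1}{2}B} |\nabla h|^2\di\meas_X$, and combining this with the Caccioppoli inequality and the $L^\infty$-bound for harmonic functions gives the desired QL; Theorem \ref{thm:fromalmostrcdtorcd} then finishes the proof.

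The main obstacle is (\ref{eq:plan-grad}). Definition \ref{def:lrcd} a priori only provides an RCD chart around each $y \in \mathcal{R}$ with no quantitative lower bound on the chart radius in terms of $\dist_X(y,\mathcal{S})$, so a naive application of the RCD-QL at the chart scale would give a Lipschitz bound for $h$ that blows up uncontrollably as $y$ approaches $\mathcal{S}$. The non-trivial task is therefore to propagate the scale-invariant RCD-QL, via a chaining or Harnack-type argument along geodesic segments inside $\mathcal{R}$, up to the natural scale $\dist_X(\cdot,\mathcal{S})/2$, so as to produce the single exponent $1-\gamma$ in (\ref{eq:plan-grad}). This is the step that replaces the direct Cheng-Yau local coordinate estimate of Lemma \ref{lem:holder} in the smooth setting and is where the most care is required.
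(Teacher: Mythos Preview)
Your overall architecture is correct and matches the paper: reduce to a QL via Theorem \ref{thm:fromalmostrcdtorcd}, establish the distance-weighted gradient bound (\ref{eq:plan-grad}), then run the $L^4$-dyadic sum, Hessian-Bochner cut-off argument, and Moser iteration across $\mathcal{S}$ exactly as in the first proof of Theorem \ref{thm:codimension4}. Steps 2 and 3 of the paper's proof are essentially what you wrote.

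The difference lies in how (\ref{eq:plan-grad}) is obtained, i.e.\ how the obstacle you flag is resolved. You propose chaining or a Harnack-type propagation from small RCD charts up to scale $\dist_X(\cdot,\mathcal{S})/2$; the paper instead bootstraps by running \emph{the very same Moser iteration you plan for the final step} already inside $\mathcal{R}$. Concretely, for $x\in\mathcal{R}$ and $0<s<\dist_X(\mathcal{S},x)$, the local RCD charts are used only \emph{qualitatively} to conclude $|\nabla h|\in L^\infty_{\loc}(B_s(x))$ via Theorem \ref{thm:qLCh} (no control on the chart radius is needed for this). One then tests the Hessian-Bochner inequality (\ref{eq:hessbochner}) against $\mathrm{Lip}_c$-cut-offs on $B_s(x)$ to get $|\mathrm{Hess}_h|\in L^2_{\loc}(B_s(x))$, hence $|\nabla h|^2\in H^{1,2}_{\loc}(B_s(x))$, and the Bochner inequality gives $\Delta|\nabla h|^2\ge 2K|\nabla h|^2$ weakly on $B_s(x)$. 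Now the key point: Moser iteration uses only the PI structure of $X$, which is assumed globally and is scale-quantitative, so it yields
\[
\||\nabla h|^2\|_{L^\infty(\frac{1}{4}B_s(x))}\le C\intav_{\frac{1}{2}B_s(x)}|\nabla h|^2\di\meas_X
\]
with $C$ independent of the chart radii. Combined with Caccioppoli and the $L^\infty$-bound for harmonic functions this gives the scale-$s$ Lipschitz estimate of Step 1, and then (\ref{eq:plan-grad}) follows exactly as in Corollary \ref{cor:holder}. In short: the paper replaces your proposed chaining by observing that the quantitative input comes from the PI condition on $X$, not from the RCD charts, so the charts need only supply qualitative regularity to get the argument started.
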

\begin{proof}
Let us follow the same line as in the proof of Theorem \ref{thm:codimension4} (see also Corollary \ref{cor:charc}). 
 Thus thanks to Theorem \ref{thm:fromalmostrcdtorcd}, it is enough to check that a QL holds. 

 \textbf{Step 1}: \textit{Consider an RCD-regular point $x \in \mathcal{R}$, put $r:=\dist_X(\mathcal{S}, x)>0$ and consider a harmonic function $h$ on $B=B_s(x)$ for some $0<s<r$. Then}
 \begin{equation}
     \|\nabla h\|_{L^{\infty}(\frac{1}{4}B)} \le \frac{C}{s}\intav_B|h|\di \meas_X.
 \end{equation}

 The proof is as follows. It follows from Definition \ref{def:almostrcd} and Theorem \ref{thm:qLCh} that 
 \begin{equation}\label{eq:lipco}
     |\nabla h| \in L^{\infty}_{\mathrm{loc}}(B).
 \end{equation}
On the other hand, similar arguments as in the proof of Lemma \ref{lem:S} allow us to show
 \begin{equation}
        \frac{1}{2}\int_{B}\phi^2|\mathrm{Hess}_h|^2\di \meas_X\le \int_{B}\left(2|\nabla h|^2|\nabla \phi|^2-K\phi^2|\nabla h|^2\right) \di \meas_X
    \end{equation}
for any Lipschitz function $\phi$ on $B (\subset \mathcal{R})$ with compact support. In particular we know $|\mathrm{Hess}_h| \in L^2_{\mathrm{loc}}(B)$, thus by (\ref{eq:lipco})
\begin{equation}
    |\nabla h|^2 \in H^{1,2}_{\mathrm{loc}}(B).
\end{equation}
Applying the local Bochner inequality, we have on $B$
\begin{equation}
    \Delta |\nabla h|^2 -2K|\nabla h|^2 \ge 0
\end{equation}
in a weak sense. 
Then the standard Moser iteration arguments show
\begin{equation}
    \|\nabla h\|_{L^{\infty}(\frac{1}{4}B)} \le C\intav_{\frac{1}{2}B}|\nabla h|^2 \di \meas_X,
\end{equation}
see for instance \cite[Lemma 5.13]{BPS} (though this is stated for $\RCD$ spaces, a locally PI condition is enough to get the same conclusion. See also \cite[Lemma 11.1]{Li}).
Then the desired conclusion comes from the above together with the Caccioppoli inequality (see for instance \cite[Lemma 3.3]{Jiang}) with a quantitative $L^{\infty}$-estimate for harmonic functions \cite[Lemma 2.1]{JKY}, as done in the first proof of Theorem \ref{thm:codimension4}. Thus we have \textbf{Step 1}.

In the sequel, let us fix an open ball $B$ in $X$ of radius $r>0$ and a harmonic function $h:B \to \mathbb{R}$. 

\textbf{Step 2}: \textit{We have for some $0<\alpha<1$} 
\begin{equation}\label{eq:holdercontiharmonic}
    |\nabla h|(x) \le \frac{C}{\dist_X(\mathcal{S}, x)^{1-\alpha}}. 
\end{equation}

    The proof is the same to that of Corollary \ref{cor:holder} combining with \textbf{Step 1}. 
    
\textbf{Step 3}: \textit{Conclusion.}

 The remaining arguments are completely the same to the corresponding parts of the proof of Theorem \ref{thm:codimension4}. Thus we conclude.
\end{proof}

Finally let us provide an observation on the \textit{ramified double cover} introduced in \cite{BBP} under assuming that the readers are familiar with this topic. 
\begin{remark}\label{rem:ramified}
Let $\pi :\hat X\to X$ be the ramified double cover of a non-collapsed $\RCD(K, N)$ space $X$ without boundary. It is conjectured by \cite[Question]{BBP} that $\hat X$ is also a non-collapsed $\RCD(K, N)$ space. 

Firstly note that a quantitative volume estimate for any ball $B$ of radius $1$ in $X$;
\begin{equation}
    \haus^N\left( B_r(\mathcal{S}^{N-2}_{\epsilon, r}) \cap B\right) \le Cr^2
\end{equation}
obtained in \cite[Theorems 1.7 and 1.9]{CJN} for Ricci limits should be generalized to $X$. We also refer the techniques in Sections 4 and 5 of \cite{BNS} along this direction. 

If so, then in particular we know that $X \setminus A$, following the terminologies in Section 3 of \cite{BBP}, should have zero $2$-capacity. Thus by the proof of \cite[Remark 3.2]{BBP} we will see that so is $\hat X \setminus \hat A$.

Then since it is easy to see the condition of RCD part for $\hat X \setminus \hat A$, we conclude that $\hat X$ is an almost $\RCD(K, N)$ space.

On the other hand, since we will be able to prove the local volume doubling condition on $\hat X$ by the same way as in the proof of (\ref{eq:expgrowth}) (see Remark \ref{rem:covering}), the remaining main issue to prove the conjecture above along this direction may be to show a local Poincar\'e inequality, the SL and a QL. 
This issue is, of course, related to the difficulty mentioned in \cite[Remark 1.24]{BBP}. 
\end{remark}

\section{Discussion and open problems}\label{sec:open}
Let us end the paper by providing a couple of questions related to this paper.

\begin{question}\label{ques:1}
Let $X$ be an IH metric measure space with the SL, whose metric structure is a length space, and let $U$ be a bounded open subset of $X$. Assume that $U$ is a locally $\BE(K, N)$ space for some $K \in \mathbb{R}$ and some $N \in [1, \infty)$, and that $U$ is \textit{locally PI} with a \textit{local QL} in the following sense;
\begin{enumerate}
    \item{(Local volume doubling on $U$)} There exists $c_{v, U}>1$ such that 
    \begin{equation}
        \meas_X(2B) \le c_{v, U}\meas_X(B) 
    \end{equation}
    holds for any ball $B$ with $2B \subset U$; 
    \item{(Local Poincar\'e inequality on $U$)} There exist $c_{p, U}>1$ and $\Lambda \ge 1$ such that for any Lipschitz function $f$ on $U$ and any ball $B$ of radius $r>0$ with $\Lambda  B \subset U$, we have
    \begin{equation}
        \intav_{B} \left| f-\intav_Bf\di \meas_X\right|\di \meas_X \le c_{p, U}r\left( \intav_{\Lambda  B}(\mathrm{Lip}f)^2\di \meas_X\right)^{\frac{1}{2}}.
    \end{equation}
    \item{(QL on $U$)} There exists $c_{h, U}>1$ such that for any harmonic function on a ball $B$ with $2B \subset U$, we have
    \begin{equation}
    \sup_{\frac{1}{2}B}|\nabla f| \le \frac{c_{h, U}}{r} \intav_{B}|f|\di \meas_X.
\end{equation}
\end{enumerate}
Then
  can we develop the structure theory of $U$ as in the Cheeger-Colding/RCD theories? For example, can we conclude that $U$ is a locally $\RCD(K, N)$ space? 
\end{question}
The Cheeger-Colding theory \cite{CheegerColding, CheegerColding1, CheegerColding2, CheegerColding3} (see also \cite{CJN,CN}) for Ricci limit spaces was developed by \textit{local} techniques, based on their \textit{almost splitting theorem} established in \cite[Theorem 6.62]{CheegerColding}. As counterparts, it is worth mentioning;
\begin{itemize}
    \item a result of Colding-Naber \cite[Theorem 1.1]{CN} for Ricci limit spaces was proved by \textit{global} techniques using the heat flow; 
    \item the structure theory of RCD spaces, for example \cite{BrueSemola, Deng, DG, MN}, are based on global techniques using, as a starting point, the convergence of the heat flows with respect to the pointed measured Gromov-Hausdorff convergence \cite[Theorem 5.7]{GigliMondinoSavare13} by Gigli-Mondino-Savar\'e.
\end{itemize}
Roughly speaking, the question above asks whether we can establish a similar structure theory for spaces with locally Ricci curvature bounded below. We refer \cite{Sturm95, Sturm96} as possible technical tools along this direction again. 

\begin{question}
    In Theorem \ref{thm:maincod2}, in order to get the same conclusion, is it possible to replace the QL-assumption, by the condition that any tangent cone is an $\RCD(0, N)$ space?
\end{question}
In connection with the question above, recall an example observed in Remark \ref{rem:example}.
\begin{question}\label{ques:convex}
    Let $X$ be an almost $\RCD(K, N)$ space. If the RCD-regular set is convex, then is $X$ an $\RCD(K, N)$ space?
\end{question}
Question \ref{ques:convex} should be fine if additionally, $X$ is assumed to be an almost smooth, and the RCD-singular set has at least codimension $4$, because of the same arguments as in \cite{CW}. 

The following is a Local-to-Global question. Recall Theorem \ref{thm:LtoG} and an example $Z$ observed in Remark \ref{rem:example} along this direction.
\begin{question}
    Let $X$ be a proper geodesic metric measure space with the Sobolev-to-Lipschitz property and the volume growth condition (\ref{eq:volumegrowth}). If $X$ is a locally $\BE(K, N)$ space, then is it an $\RCD(K, N)$ space?
\end{question}

\begin{question}\label{ques:convheat}
    Let $X_i$ be a pmGH convergent sequence of IH metric measure spaces to $X$. Assume that they are uniformly PI with a uniformly QL, in the sense; there exist $c_v, c_p, c_h$ such that all $X_i, X$ are PI for $c_v$ and $c_p$, with the QL for $c_h$. Then does the Mosco convergence of Cheeger energies hold as done in \cite{GigliMondinoSavare13} for RCD spaces? In particular, can we establish the convergence of the heat kernels?
\end{question}
Recalling techniques in the developments of the RCD theory (see for instance \cite{AmbrosioHonda, AmbrosioHonda2, BNS, BPS1, MN}), the \textit{local version} of Question \ref{ques:convheat} should be closely related to Question \ref{ques:1}.

Recall that it is proved in \cite[Theorem 1.1]{GKMS} that the quotient of an $\RCD$ space by an isometric action by a compact Lie group is also an $\RCD$ space.
The following asks for a kind of reverse direction of this. Note that \cite[Theorem 1.1]{MW} can be regarded as a positive result along this direction (see also Remark \ref{rem:covering}).
\begin{question}
Let $Y$ be a metric measure space with the SL and let $G$ be a compact Lie group isometrically (as metric measure spaces) acting on $Y$. If the quotient space $X=Y/G$ is an RCD space, then, when can we conclude that $Y$ is also an RCD space? For example, assume the following:
\begin{enumerate}
    \item 
    The quotient metric measure space $X=Y/G$ is an $\RCD$ space.
    \item 
    The projection $\pi:Y \to X$ is almost smooth in the sense; there exists a smooth part $\mathcal{R}$ of $X$ such that both $\pi^{-1}(\mathcal{R})$ and $\pi|_{\pi^{-1}(\mathcal{R})}$ are also smooth. 
    \item $Y\setminus \pi^{-1}(\mathcal{R})$ has  codimension  at least $4$.
    \item The $N$-Bakry-\'Emery Ricci tensor on the smooth part $\pi^{-1}(\mathcal{R})$ of $Y$ is bounded below.
\end{enumerate}
    Then can we conclude that $Y$ is an $\RCD$ space?
\end{question}
It is worth mentioning that the third assumption about the codimension of the singular set cannot be dropped because for the metric measure space $Z=C(\mathbb{S}^1(2))$ observed in Remark \ref{rem:example}, considering the canonical action of $\{\pm 1\}$ to $Z$ (thus the pole is the fixed point), though it is not a locally $\BE$ space, the quotient space $C(\mathbb{S}^1(1))\cong\mathbb{R}^2$ is an $\RCD(0, 2)$ space.
It is also interesting to ask whether the same conclusion holds after replacing (3) and (4) by being a locally $\BE$ space.

\end{document}